\newtheorem{theorem}{Theorem}[section]
\newtheorem{lemma}[theorem]{Lemma}
\newtheorem{defi}[theorem]{Definition}
\newtheorem{rem}[theorem]{Remark}
\newtheorem{prop}[theorem]{Proposition}
\newtheorem{cor}[theorem]{Corollary}
\DeclareMathOperator{\im}{im}
\DeclareMathOperator{\ind}{ind}
\DeclareMathOperator{\coker}{coker}
\DeclareMathOperator{\pr}{pr}
\title{The index bundle for Fredholm morphisms}
\author{Nils Waterstraat}
\begin{document}
\date{}
\maketitle

\footnotetext[1]{{\bf AMS Subject Classification: 46M20, 47A53 }}
\footnotetext[2]{The author was supported by the research training group 1493~``Mathematical Structures in Modern Quantum Physics''~of the German Research Foundation (DFG) and by the VIGONI program of the German Academic Exchange Service (DAAD).}

\begin{abstract}
We extend the index bundle construction for families of bounded Fredholm operators to morphisms between Banach bundles.
\end{abstract}

\section{Introduction}
In the sixties Atiyah and J\"anich constructed independently a map

\begin{align}\label{indmap}
\ind:[X,\mathcal{BF}(H)]\rightarrow K(X),
\end{align}
which assigns to every homotopy class of families of bounded Fredholm operators acting on the separable Hilbert space $H$ a virtual bundle (cf. \cite[App. A]{KTheoryAtiyah}). This so called \textit{index bundle} shares several formal properties with the ordinary Fredholm index and, moreover, \eqref{indmap} turned out to be an isomorphism which is nowadays known as the Atiyah-J\"anich theorem. The proof of this result shows the exactness of the sequence of semi-groups

\begin{align}\label{exactsequence}
1\rightarrow[X,GL(H)]\xrightarrow{\iota}[X,\mathcal{BF}(H)]\xrightarrow{\ind} K(X)\rightarrow 1,
\end{align}
which implies the assertion since $GL(H)$ is contractible according to Kuiper's theorem.\\
Later the construction of the index bundle was extended to families of Fredholm operators acting on a general Banach space $E$ and the exactness of the corresponding sequence \eqref{exactsequence} was shown under additional assumptions on $E$ (see \cite[\S 2]{Banach Bundles} for a survey). Since $GL(E)$ has been proved to be contractible for a wide class of Banach spaces, $\mathcal{BF}(E)$ represents the $K$-theory functor for many common sequence and function spaces.\\
The aim of this paper is to generalise the index bundle construction from families of Fredholm operators acting on a fixed Banach space to Fredholm morphisms $L:\mathcal{E}\rightarrow\mathcal{F}$ between Banach bundles over a compact base space $X$. Here some technical difficulties arise which are mainly caused by the target bundle $\mathcal{F}$. We solve these problems by using deep results from the theory of Banach bundles which are based on the fact that every infinite dimensional Banach bundle admits a nowhere vanishing continuous section. Besides the elementary properties of our index bundle $\ind L\in K(X)$, we also show the exactness at the element in the middle of a corresponding sequence like \eqref{exactsequence} and discuss an application.\\
The paper is structured as follows: In the first section we recall some well known generalities on Banach bundles and use them to recall the classical index bundle construction for families of Fredholm operators acting on a fixed Banach space. Afterwards we treat more advanced theory about Banach bundles where we give complete and detailed proofs although the results are well known. In the second section we use these preliminaries in order to prove two quite technical theorems that lead eventually to the definition of the index bundle for Fredholm morphisms. In the third section we discuss its elementary properties and show that we obtain the classical definition if $\mathcal{E}=\mathcal{F}=X\times E$ for some Banach space $E$. In the fourth section we consider the corresponding sequence \eqref{exactsequence}. Finally, in the fifth section we give an application to families of boundary value problems of first order ordinary differential operators.\\
Since all constructions hold for real and complex Banach bundles, we write $\mathbb{K}$ for the underlying scalars. However, in order to simplify notation, we denote by $K(X)$ the complex and real $K$-theory of the compact topological space $X$. Accordingly, the reader should keep in mind that one has to substitute $K(X)$ by $KO(X)$ if real Banach bundles are considered. Finally, we write $\Theta(V)$ for the trivial vector bundle with fibre $V$ over $X$.

\section{Some generalities on Banach bundles and their morphisms}\label{BanachBundles}

\subsection{Elementary properties and the index bundle}

In the following we denote the total spaces of Banach bundles by calligraphic letters as $\mathcal{E}$, $\mathcal{F}$ and their model spaces by the corresponding latin letters $E,F$ without further reference. We denote by $\mathcal{L}(\mathcal{E},\mathcal{F})$ the space of all morphisms and abbreviate notation by $\mathcal{L}(\mathcal{E}):=\mathcal{L}(\mathcal{E},\mathcal{E})$ as usual. \\
We now recall some well known results which will be used in the following sections without further reference. Detailed proofs can be found for example in \cite{ThesisIch}. We begin with the close relation between projections and subbundles. Recall that a subbundle $\mathcal{F}$ of a Banach bundle $\mathcal{E}$ is called direct if each fibre $\mathcal{F}_\lambda$ is a complemented subspace of the corresponding fibre $\mathcal{E}_\lambda$.

\begin{prop}\label{BanachBundles-prop-projections}
Let $\mathcal{E}$ be a Banach bundle and let $P\in\mathcal{L}(\mathcal{E})$ be an idempotent morphism. Then

\begin{align*}
\im(P)=\{u\in\mathcal{E}:Pu=u\}\subset\mathcal{E}
\end{align*}
is a direct subbundle of $\mathcal{E}$.\\
Conversely, if the base space is paracompact and $\mathcal{F}\subset\mathcal{E}$ is a direct subbundle of $\mathcal{E}$, then there exists an idempotent morphism $P\in\mathcal{L}(\mathcal{E})$ such that $\mathcal{F}=\im(P)$.
\end{prop}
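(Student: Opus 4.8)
The plan is to prove the two implications separately. For the forward direction I will reduce, over a trivialising neighbourhood, to the familiar situation of a norm-continuous family of idempotents on a fixed Banach space and use the ``rotation'' trick to make that family constant; for the converse I will build the projection locally and glue the pieces with a partition of unity.

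For the first assertion fix $\lambda_0\in X$ and a local trivialisation $\Phi\colon\mathcal{E}|_U\to U\times E$ near $\lambda_0$, under which $P$ corresponds to a continuous family of idempotents $\lambda\mapsto P_\lambda\in\mathcal{L}(E)$. Set
\[
T_\lambda:=P_\lambda P_{\lambda_0}+(I-P_\lambda)(I-P_{\lambda_0}),
\]
so that $T_{\lambda_0}=I$; since $\lambda\mapsto T_\lambda$ is continuous and $GL(E)$ is open in $\mathcal{L}(E)$ with continuous inversion, $T_\lambda$ is invertible with continuous inverse on some neighbourhood $V\subseteq U$ of $\lambda_0$. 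A short computation using $P_\lambda^2=P_\lambda$ and $P_{\lambda_0}^2=P_{\lambda_0}$ gives $T_\lambda P_{\lambda_0}=P_\lambda T_\lambda$, hence $P_\lambda=T_\lambda P_{\lambda_0}T_\lambda^{-1}$ on $V$. Therefore the modified trivialisation $\Psi:=(\mathrm{id}\times T^{-1})\circ\Phi\colon\mathcal{E}|_V\to V\times E$ conjugates $P$ to the constant family $P_{\lambda_0}$, and consequently carries $\{u:Pu=u\}|_V$ onto $V\times\im(P_{\lambda_0})$. Since $\im(P_{\lambda_0})$ is complemented in $E$, with complement $\im(I-P_{\lambda_0})$, this identifies $\{u:Pu=u\}$ near $\lambda_0$ with a direct subbundle; as $\lambda_0$ was arbitrary and $\{u:Pu=u\}=\im(P)$ (immediate from $P^2=P$), the claim follows.

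For the converse, paracompactness lets us choose a locally finite open cover $\{U_i\}$ of $X$ together with trivialisations of $\mathcal{E}|_{U_i}$ adapted to $\mathcal{F}$, i.e.\ identifying $\mathcal{F}|_{U_i}$ with $U_i\times E_i^0\subseteq U_i\times E$ for a closed subspace $E_i^0\subseteq E$. Here the hypothesis that $\mathcal{F}$ is \emph{direct} enters decisively: each fibre of $\mathcal{F}$ being complemented forces $E_i^0$ to be complemented in $E$, and fixing a complement produces a bundle projection $Q_i\in\mathcal{L}(\mathcal{E}|_{U_i})$ with $\im(Q_i)=\mathcal{F}|_{U_i}$ and $Q_i|_{\mathcal{F}|_{U_i}}=\mathrm{id}$. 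Now pick a partition of unity $\{\rho_i\}$ subordinate to $\{U_i\}$, extend each $\rho_iQ_i$ by zero to a morphism of $\mathcal{E}$, and put $P:=\sum_i\rho_iQ_i\in\mathcal{L}(\mathcal{E})$, a sum that is locally finite. Because every $Q_i$ maps fibres of $\mathcal{E}$ into fibres of the linear subbundle $\mathcal{F}$, one has $\im(P)\subseteq\mathcal{F}$; because $Q_iu=u$ for $u\in\mathcal{F}_\lambda$ whenever $\lambda\in\supp\rho_i$, one has $Pu=\bigl(\sum_i\rho_i(\lambda)\bigr)u=u$ for $u\in\mathcal{F}$. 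These two facts together yield $P^2=P$ (any $Pu$ lies in $\mathcal{F}$, so $P(Pu)=Pu$) and $\im(P)=\mathcal{F}$, as required.

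The only step I expect to require real care is securing the adapted trivialisations in the converse, since it is exactly there that directness is used — without it one can neither choose a local complementary subbundle nor the local projections $Q_i$. The partition-of-unity averaging that follows avoids the usual obstruction that convex combinations of projections need not be projections, precisely because the averaged operator is automatically the identity on $\mathcal{F}$ with image inside $\mathcal{F}$. The forward implication is then routine modulo the conjugation computation and keeping track of which trivialisation is in force.
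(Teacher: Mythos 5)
The paper gives no proof of this proposition in the body, deferring instead to \cite{ThesisIch}, so there is no in-paper argument to compare against; I can only assess your proof on its own terms, and it is correct. For the forward implication your conjugation $T_\lambda=P_\lambda P_{\lambda_0}+(I-P_\lambda)(I-P_{\lambda_0})$ indeed satisfies $T_{\lambda_0}=I$ and $T_\lambda P_{\lambda_0}=P_\lambda T_\lambda=P_\lambda P_{\lambda_0}$, so $T_\lambda$ is invertible near $\lambda_0$ and the modified trivialisation $\Psi=(\mathrm{id}\times T^{-1})\circ\Phi$ carries $P$ to the constant idempotent $P_{\lambda_0}$, identifying $\im(P)$ locally with $V\times\im(P_{\lambda_0})$ and exhibiting the complement $V\times\ker(P_{\lambda_0})$; note, correctly, that no paracompactness is needed here. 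For the converse your gluing $P=\sum_i\rho_iQ_i$ works precisely because each $Q_i$ has image in $\mathcal{F}$ and restricts to the identity on $\mathcal{F}$, so $P$ inherits both properties and hence is idempotent with $\im(P)=\mathcal{F}$ --- the standard way to sidestep the fact that convex combinations of projections are not projections. One small point worth making explicit: the existence of the adapted local trivialisations identifying $\mathcal{F}|_{U_i}$ with $U_i\times E_i^0$ is exactly what the paper's notion of ``subbundle'' of a Banach bundle provides, and your use of directness to complement $E_i^0$ in $E$ and thereby produce the local $Q_i$ is the right place for that hypothesis to enter.
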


Moreover, the following perfect analogy with subspaces of Banach spaces holds.

\begin{prop}\label{BanachBundles-prop-splittingmor}
Let $\mathcal{E}$ be a Banach bundle over a paracompact base space and let $\mathcal{F},\mathcal{G}$ be subbundles of $\mathcal{E}$. Then $\mathcal{E}\cong\mathcal{F}\oplus\mathcal{G}$ if and only if there exists an idempotent morphism $P\in\mathcal{L}(\mathcal{E})$ such that $\im(P)=\mathcal{F}$ and $\ker(P)=\mathcal{G}$.
\end{prop}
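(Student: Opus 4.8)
The plan is to route the whole proof through one canonical morphism, the \emph{sum morphism} of the two subbundles,
\[
j\colon\mathcal{F}\oplus\mathcal{G}\longrightarrow\mathcal{E},\qquad j_\lambda(u,v)=u+v\in\mathcal{E}_\lambda,
\]
where $\mathcal{F}\oplus\mathcal{G}$ denotes the Whitney sum of $\mathcal{F}$ and $\mathcal{G}$. Working in a local trivialisation of $\mathcal{E}$ shows at once that $j$ is a morphism covering $\mathrm{id}_X$, and on the fibre over $\lambda$ the operator $j_\lambda$ is injective iff $\mathcal{F}_\lambda\cap\mathcal{G}_\lambda=0$ and surjective iff $\mathcal{F}_\lambda+\mathcal{G}_\lambda=\mathcal{E}_\lambda$; hence $j$ is a fibrewise isomorphism exactly when $\mathcal{E}_\lambda=\mathcal{F}_\lambda\oplus\mathcal{G}_\lambda$ for all $\lambda$. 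I would then invoke the (well known) fact that a fibrewise bijective morphism of Banach bundles is automatically an isomorphism of Banach bundles: the open mapping theorem makes each $j_\lambda$ a topological isomorphism, and continuity of inversion on the open set of invertible operators makes $\lambda\mapsto j_\lambda^{-1}$ continuous, i.e.\ $j^{-1}$ a morphism (cf.\ \cite{ThesisIch}). Therefore the assertion $\mathcal{E}\cong\mathcal{F}\oplus\mathcal{G}$ is equivalent to ``$j$ is an isomorphism'', and it suffices to prove this equivalent to the existence of the idempotent $P$.

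First I would prove ``$\Leftarrow$''. Given an idempotent $P\in\mathcal{L}(\mathcal{E})$ with $\im(P)=\mathcal{F}$ and $\ker(P)=\mathcal{G}$, I set
\[
q\colon\mathcal{E}\longrightarrow\mathcal{F}\oplus\mathcal{G},\qquad q(u)=(Pu,\ u-Pu).
\]
This is a morphism because $P$ and $\mathrm{id}-P$ are, and it takes values in $\mathcal{F}\oplus\mathcal{G}$ since $Pu\in\im(P)=\mathcal{F}$ and, using $P(\mathrm{id}-P)=P-P^{2}=0$, also $u-Pu\in\ker(P)=\mathcal{G}$. Checking fibrewise gives $j(q(u))=Pu+(u-Pu)=u$, while for $u\in\mathcal{F}=\im(P)$ and $v\in\mathcal{G}=\ker(P)$ one has $Pu=u$ and $Pv=0$, hence $q(j(u,v))=q(u+v)=(u,v)$. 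Thus $q$ is a two-sided inverse of $j$, so $j$ is an isomorphism and $\mathcal{E}\cong\mathcal{F}\oplus\mathcal{G}$. (Alternatively one may just note that $\mathcal{E}_\lambda=\im(P_\lambda)\oplus\ker(P_\lambda)$ holds for every bounded idempotent and quote the fibrewise criterion above, Proposition~\ref{BanachBundles-prop-projections} ensuring that image and kernel are subbundles.)

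Then I would prove ``$\Rightarrow$''. Suppose $j$ is an isomorphism and let $\pi\in\mathcal{L}(\mathcal{F}\oplus\mathcal{G})$ be the canonical projection $\pi(u,v)=(u,0)$, which is idempotent. I put $P:=j\circ\pi\circ j^{-1}\in\mathcal{L}(\mathcal{E})$, which is legitimate because $j^{-1}$ is a morphism; then $P^{2}=j\pi j^{-1}j\pi j^{-1}=j\pi^{2}j^{-1}=j\pi j^{-1}=P$, so $P$ is idempotent. Since $j$ is bijective, $Pu=u$ is equivalent to $\pi(j^{-1}u)=j^{-1}u$, i.e.\ to $j^{-1}u\in\mathcal{F}\oplus\{0\}$, i.e.\ to $u\in j(\mathcal{F}\oplus\{0\})=\mathcal{F}$; using the description $\im(P)=\{u:Pu=u\}$ from Proposition~\ref{BanachBundles-prop-projections} this yields $\im(P)=\mathcal{F}$. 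Likewise $Pu=0$ is equivalent to $j^{-1}u\in\{0\}\oplus\mathcal{G}$, i.e.\ to $u\in j(\{0\}\oplus\mathcal{G})=\mathcal{G}$, so $\ker(P)=\mathcal{G}$. This exhibits the desired idempotent and completes the proof.

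The argument is largely formal, so I do not expect a serious obstacle; the one point that requires care — and hence the ``hard part'' — is the passage from fibrewise bijectivity to a genuine isomorphism of Banach bundles, which is needed both to make sense of $\mathcal{E}\cong\mathcal{F}\oplus\mathcal{G}$ through $j$ and to guarantee that $j^{-1}$ is again a morphism when defining $P$. Paracompactness of the base enters only via the appeal to Proposition~\ref{BanachBundles-prop-projections}; the construction of $j$, $q$ and $P$ itself does not use it.
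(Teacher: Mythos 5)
The paper does not include a proof of this proposition; it is one of the "well known results" stated without argument, with a pointer to \cite{ThesisIch}. So there is no in-text proof to compare against. Judged on its own, your argument is correct and is the natural one: you channel everything through the sum morphism $j\colon\mathcal{F}\oplus\mathcal{G}\to\mathcal{E}$, use the standard fact (open mapping theorem plus continuity of inversion in each fibre) that a fibrewise bijective morphism of Banach bundles is an isomorphism, and then both implications become purely algebraic manipulations with $j$, $q=(P,\mathrm{id}-P)$, and $P=j\pi j^{-1}$. The verifications of $jq=\mathrm{id}$, $qj=\mathrm{id}$, $P^2=P$, $\im P=\mathcal{F}$ and $\ker P=\mathcal{G}$ are all correct.

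Two small remarks worth making explicit. First, you are reading $\mathcal{E}\cong\mathcal{F}\oplus\mathcal{G}$ as the \emph{internal} direct sum, i.e.\ $\mathcal{E}_\lambda=\mathcal{F}_\lambda\oplus\mathcal{G}_\lambda$ for all $\lambda$, which is equivalent to $j$ being a fibrewise isomorphism. That is the intended reading in this paper (compare the use of the symbol in Corollary~\ref{BanachBundles-cor-splitting}), but it should be stated, since an \emph{abstract} bundle isomorphism $\mathcal{E}\cong\mathcal{F}\oplus\mathcal{G}$ would not force $j$ to be invertible. Second, on the role of paracompactness: you are right that it plays no role in your construction of $j$, $q$ or $P$; it is a hypothesis inherited from the surrounding circle of results (the converse direction of Proposition~\ref{BanachBundles-prop-projections}, Corollary~\ref{BanachBundles-cor-splitting}), and the fact that $\im(P)$, $\ker(P)$ are subbundles of $\mathcal{E}$, which you need to make sense of $q$ landing in the Whitney sum, uses only the first (non-paracompact) half of that proposition. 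So your accounting of the hypothesis is accurate.
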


\begin{cor}\label{BanachBundles-cor-splitting}
Let $\mathcal{E}$ be a Banach bundle over a paracompact base and let $\mathcal{F}$ be a direct subbundle. Then $\mathcal{F}$ is complemented, that is,  there exists a direct subbundle $\mathcal{G}\subset\mathcal{E}$, such that

\begin{align*}
\mathcal{E}\cong\mathcal{F}\oplus\mathcal{G}.
\end{align*}

\end{cor}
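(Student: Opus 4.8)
The plan is to obtain the complement directly from the correspondence between direct subbundles and idempotent morphisms provided by the preceding results. Since the base space is paracompact and $\mathcal{F}\subset\mathcal{E}$ is a direct subbundle, the converse part of Proposition~\ref{BanachBundles-prop-projections} furnishes an idempotent morphism $P\in\mathcal{L}(\mathcal{E})$ with $\im(P)=\mathcal{F}$.

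Next I would pass to the complementary idempotent $Q:=\mathrm{Id}_{\mathcal{E}}-P$. It is again a morphism, since $\mathcal{L}(\mathcal{E})$ is closed under addition and contains $\mathrm{Id}_{\mathcal{E}}$, and a one-line computation gives $Q^2=Q$. Hence the first part of Proposition~\ref{BanachBundles-prop-projections} applies to $Q$ and shows that $\mathcal{G}:=\im(Q)=\{u\in\mathcal{E}:Qu=u\}$ is a direct subbundle of $\mathcal{E}$. I would also record the fibrewise identities $\im(Q)=\ker(P)$ and $\ker(Q)=\im(P)$: indeed $u\in\ker(P)$ iff $Pu=0$ iff $Qu=u$, and symmetrically for the other equality.

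Finally I would invoke Proposition~\ref{BanachBundles-prop-splittingmor} with this $P$: as $P$ is idempotent with $\im(P)=\mathcal{F}$ and $\ker(P)=\mathcal{G}$, the proposition yields $\mathcal{E}\cong\mathcal{F}\oplus\mathcal{G}$, which is the assertion. There is no genuine obstacle here; the statement is a formal consequence of the two preceding propositions, and the only point deserving a moment's care is checking that $\mathrm{Id}_{\mathcal{E}}-P$ is again an idempotent \emph{morphism}, so that Proposition~\ref{BanachBundles-prop-projections} is legitimately applicable to it.
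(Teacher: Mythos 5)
Your argument is correct and is the natural deduction the paper has in mind: extract the idempotent $P$ with $\im(P)=\mathcal{F}$ from the converse direction of Proposition~\ref{BanachBundles-prop-projections}, observe that $Q=I-P$ is again an idempotent morphism whose image is a direct subbundle $\mathcal{G}=\ker(P)$ by the forward direction of the same proposition, and then invoke Proposition~\ref{BanachBundles-prop-splittingmor} with this $P$ to conclude $\mathcal{E}\cong\mathcal{F}\oplus\mathcal{G}$. The paper leaves this corollary without a written proof precisely because it follows in this formal way from the two preceding propositions, and you have correctly identified the one point worth a sentence — namely that $I-P$ belongs to $\mathcal{L}(\mathcal{E})$ and is idempotent, so that Proposition~\ref{BanachBundles-prop-projections} genuinely applies to it.
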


Finally, we need that kernels and images of bundle morphisms can give rise to subbundles of the corresponding target and domain bundles. 

\begin{lemma}\label{BanachBundles-cor-kerbundle}
Let $\mathcal{E},\mathcal{F}$ be Banach bundles over the paracompact base $X$ and let $L\in\mathcal{L}(\mathcal{E},\mathcal{F})$ be a morphism.

\begin{enumerate}
	\item[(i)] If the kernels $\ker L_\lambda$, $\lambda\in X$, form a direct subbundle of $\mathcal{E}$ and each $L_\lambda$ has a complemented image, then the spaces $\im L_\lambda$ form a direct subbundle of $\mathcal{F}$.
	\item[(ii)] If the images $\im L_\lambda$ form a direct subbundle of $\mathcal{F}$ and each $L_\lambda$ has a complemented kernel, then the spaces $\ker L_\lambda$ form a direct subbundle of $\mathcal{E}$. 
\end{enumerate}
\end{lemma}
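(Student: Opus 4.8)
The plan is to prove both statements by the same \emph{local perturbation} argument, reducing each to the classical fact that an injective operator with complemented image (resp.\ a surjective operator with complemented kernel) between fixed Banach spaces is stable under small perturbations, and that the inverse of an invertible operator depends continuously on it. Throughout I use that being a direct subbundle is a local property, so it suffices to argue over a small neighbourhood $U$ of an arbitrary fixed point $\lambda_0\in X$.

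For (i), I would first split off the kernel. Since $\ker L:=\bigcup_{\lambda\in X}\ker L_\lambda$ is a direct subbundle of $\mathcal{E}$ over the paracompact base $X$, Corollary~\ref{BanachBundles-cor-splitting} yields a direct subbundle $\mathcal{G}\subset\mathcal{E}$ with $\mathcal{E}_\lambda=\ker L_\lambda\oplus\mathcal{G}_\lambda$ for every $\lambda$. The restriction $L|_{\mathcal{G}}\colon\mathcal{G}\to\mathcal{F}$ is a morphism which is fibrewise injective and satisfies $\im(L|_{\mathcal{G}})_\lambda=\im L_\lambda$, so each fibre of $L|_{\mathcal{G}}$ still has complemented image. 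Choosing $U$ so small that $\mathcal{G}|_U\cong U\times G$ and $\mathcal{F}|_U\cong U\times F$, the morphism $L|_{\mathcal{G}}$ becomes a continuous family $\lambda\mapsto T_\lambda\in\mathcal{L}(G,F)$ of injective operators. Fixing a closed complement $C$ of $\im T_{\lambda_0}$ in $F$, the operators $S_\lambda\colon G\oplus C\to F$, $S_\lambda(g,c)=T_\lambda g+c$, form a continuous family with $S_{\lambda_0}$ invertible, hence $S_\lambda\in GL(G\oplus C,F)$ for $\lambda$ in a smaller neighbourhood of $\lambda_0$, and $\lambda\mapsto S_\lambda^{-1}$ is continuous there as well. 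Since $\im T_\lambda=S_\lambda(G\oplus\{0\})$, the spaces $\im L_\lambda$ are, over this neighbourhood, the image under a bundle isomorphism of the constant direct subbundle $U\times(G\oplus\{0\})$, hence a direct subbundle of $\mathcal{F}|_U$; this proves (i). For (ii) I would argue dually: set $\mathcal{F}':=\im L=\bigcup_\lambda\im L_\lambda$, a direct subbundle of $\mathcal{F}$ and in particular a Banach bundle, and choose $U$ with $\mathcal{E}|_U\cong U\times E$ and $\mathcal{F}'|_U\cong U\times F'$, so that $L$ induces a continuous family $\lambda\mapsto T_\lambda\in\mathcal{L}(E,F')$ of surjective operators, each with complemented kernel. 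Fixing a closed complement $N$ of $\ker T_{\lambda_0}$ in $E$ with associated idempotent $P\in\mathcal{L}(E)$, $\im P=\ker T_{\lambda_0}$, $\ker P=N$, the open mapping theorem shows $T_{\lambda_0}|_N\colon N\to F'$ is an isomorphism, and a direct computation shows $R_\lambda\colon E\to\ker T_{\lambda_0}\oplus F'$, $R_\lambda(e)=(Pe,T_\lambda e)$, is invertible at $\lambda_0$, hence for $\lambda$ near $\lambda_0$, with continuous inverse. Since $Pe\in\ker T_{\lambda_0}$ always, one has $\ker T_\lambda=R_\lambda^{-1}\bigl(\ker T_{\lambda_0}\oplus\{0\}\bigr)$, so the spaces $\ker L_\lambda$ are locally the preimage under a bundle isomorphism of the constant direct subbundle $U\times(\ker T_{\lambda_0}\oplus\{0\})$, hence a direct subbundle of $\mathcal{E}|_U$.

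I expect the only genuine subtlety to be bookkeeping rather than a deep obstacle: one must keep careful track of which Banach spaces are fixed (the model spaces $G,F,E,F'$ and the complements $C,N$ chosen at $\lambda_0$) and which data vary with $\lambda$, and verify that the auxiliary families $S_\lambda$ and $R_\lambda$ are honest continuous morphisms between (locally trivial) Banach bundles with continuously varying inverses, so that the transported spaces are again direct subbundles. The functional-analytic input — stability of invertibility under small perturbations and continuity of inversion — is entirely standard, and Corollary~\ref{BanachBundles-cor-splitting} together with local triviality of subbundles does the rest of the work.
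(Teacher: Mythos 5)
The paper does not actually supply a proof of this lemma; it defers to the reference \cite{ThesisIch} with the remark that ``detailed proofs can be found there.'' Your argument is a correct, self-contained proof and it is the standard local one: split off the known direct subbundle, reduce to a family of injective (resp.\ surjective) operators between fixed Banach spaces via local trivialisations, and stabilise by adding the complementary data chosen at $\lambda_0$ to build an auxiliary family $S_\lambda$ (resp.\ $R_\lambda$) that is invertible near $\lambda_0$ and carries the target subspaces to a constant one. The only point worth stating more explicitly is the step you flag as bookkeeping: you should say why the image (resp.\ preimage) of the constant subbundle $U\times(G\oplus\{0\})$ under the bundle isomorphism $S$ (resp.\ $R$) is again a \emph{direct} subbundle, namely because conjugating the constant idempotent by $S_\lambda$ (resp.\ $R_\lambda$) yields a continuous family of idempotents with image $\im L_\lambda$ (resp.\ $\ker L_\lambda$), and then Proposition~\ref{BanachBundles-prop-projections} applies over $U$; combined with the observation that ``direct subbundle'' is a local and fibrewise condition, this closes the argument. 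In part (ii) you should also spell out, as you tacitly do, that $L$ does define a genuine morphism into the direct subbundle $\mathcal{F}'=\im L$: in a trivialisation of $\mathcal{F}$ adapted to the splitting $\mathcal{F}=\mathcal{F}'\oplus\mathcal{F}''$ (which exists by Corollary~\ref{BanachBundles-cor-splitting}) the local representative of $L$ lands in the $\mathcal{F}'$-summand, so continuity into $\mathcal{L}(E,F')$ is automatic. With these remarks made explicit, the proof is complete and there is no gap.
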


We now briefly sketch the classical construction of the index bundle. Accordingly, let $E$ be a Banach space, let $X$ be a compact topological space and let $L:X\rightarrow\mathcal{BF}(E)$ be a family of bounded Fredholm operators which we assume to be continuous with respect to the norm topology on $\mathcal{BF}(E)$. The main observation for defining the index bundle is that there exists a closed subspace $E_1\subset E$ of finite codimension such that $\ker L_\lambda\cap E_1=\{0\}$ for all $\lambda\in X$ (cf. \cite[Lemma 2.1]{Banach Bundles}). Then the restrictions $L_\lambda\mid_{E_1}$ to $E_1$ define a bundle monomorphism of $\Theta(E_1)$ into $\Theta(E)$. Since each $\im L_\lambda\mid_{E_1}$ is complemented, $\im L\mid_{X\times E_1}$ defines a subbundle of $\Theta(E)$ according to lemma \ref{BanachBundles-cor-kerbundle}. Consequently, by proposition \ref{BanachBundles-prop-projections} we can find a family of projections $P:X\rightarrow\mathcal{L}(E)$ such that $\im P_\lambda=\im L_\lambda\mid_{E_1}$, $\lambda\in X$. Now the index bundle of $L$ is defined by

\begin{align*}
\ind L=[\Theta(E/{E_1})]-[\im(I-P)]\in K(X).
\end{align*}
Its well definedness and main properties can easily be worked out by the reader but will also follow from our results in sections 3 and 4.


\subsection{Sections and finite dimensional subbundles}
In this section we consider finite dimensional subbundles of infinite dimensional Banach bundles, where results can be obtained that are in marked contrast to the case of finite dimensional bundles. The reason for the appearance of unexpected phenomena is given by the deep theorem that every infinite dimensional Banach space $E$ is homeomorphic to $E\setminus\{0\}$.\\
Our main reference here is \cite{Banach Bundles}, however, we use a slightly different presentation and give more details at points that will become important for us in the construction of the index bundle below.

\begin{theorem}\label{BanachBundles-theorem-nowhere}
Let $p:\mathcal{E}\rightarrow X$ be an infinite dimensional Banach bundle over the paracompact  base $X$ and let $\{\delta_1,\ldots,\delta_m\}$ be pointwise linear independent sections of $\mathcal{E}$ over an open set $U\subset X$. Then for every closed $D\subset U$ there exist pointwise linear independent sections $\{\tilde{\delta}_1,\ldots,\tilde{\delta}_m\}$ of the whole bundle $\mathcal{E}$ such that $\tilde{\delta}_i\mid_D=\delta_i$, $i=1,\ldots,m$. 
\end{theorem}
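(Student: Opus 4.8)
The plan is to first handle the local problem and then globalize with a partition of unity, using the theorem that $E \setminus \{0\} \cong E$ (more precisely, that the fibre-deletion trick lets one "push sections off each other") as the central device. First I would reduce to the case $m=1$: suppose we already know how to extend a single nowhere-vanishing section, and proceed by induction on $m$. Given $\{\delta_1,\ldots,\delta_m\}$ pointwise linearly independent over $U$, extend $\delta_1$ to a nowhere-vanishing section $\tilde\delta_1$ of $\mathcal{E}$ with $\tilde\delta_1\mid_D = \delta_1$. Then $\Theta$ generated by $\tilde\delta_1$ is a trivial line subbundle $\mathcal{L}_1 \subset \mathcal{E}$, which is direct (a nowhere-vanishing section of a Banach bundle spans a complemented line bundle, since finite-dimensional subspaces are always complemented fibrewise, and one applies Proposition \ref{BanachBundles-prop-projections} / Corollary \ref{BanachBundles-cor-splitting} over the paracompact base). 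Write $\mathcal{E} \cong \mathcal{L}_1 \oplus \mathcal{E}'$; over $U$ the images of $\delta_2,\ldots,\delta_m$ under the projection to $\mathcal{E}'$ are still pointwise linearly independent (modding out by $\tilde\delta_1$ preserves independence because the $\delta_i$ are independent together with $\delta_1$), and $\mathcal{E}'$ is again an infinite dimensional Banach bundle, so the inductive hypothesis applies; finally add back the $\mathcal{L}_1$-components, which are honest global sections obtained by extending scalar functions via Tietze. So everything comes down to the $m=1$ case.

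For $m=1$ the task is: given a section $\delta$ of $\mathcal{E}$ over $U$ with $\delta_\lambda \neq 0$ for all $\lambda \in U$, and a closed $D \subset U$, produce a nowhere-vanishing global section agreeing with $\delta$ on $D$. First I would pick an open $V$ with $D \subset V \subset \overline{V} \subset U$ (normality of the paracompact Hausdorff base) and a Urysohn function $\varphi$ that is $1$ on $D$ and supported in $V$; then $\varphi\delta$ extends by zero to a global continuous section $\sigma$ of $\mathcal{E}$, which equals $\delta$ on $D$ but may vanish elsewhere (on $X \setminus V$, and possibly where $\varphi$ is between $0$ and $1$ — though actually $\varphi\delta \neq 0$ wherever $\varphi \neq 0$, so $\sigma$ vanishes exactly on $X \setminus \{\varphi > 0\}$, a closed set disjoint from $D$). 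Now I need to perturb $\sigma$ away from $0$ without changing it on $D$. This is where infinite-dimensionality enters: cover $X$ by trivializing open sets $\{W_j\}$; over each $W_j \cong W_j \times E$ the section $\sigma$ is a map $W_j \to E$, and using a homeomorphism $h: E \to E \setminus \{0\}$ together with a further partition of unity subordinate to $\{W_j\}$ with a bump that is supported away from $D$, I would locally replace $\sigma$ by $h \circ \sigma$ (suitably interpolated so it is unchanged near $D$) to kill the zeros one chart at a time, taking care that later corrections do not reintroduce zeros in regions already fixed. Iterating over the locally finite cover and patching gives the desired nowhere-vanishing $\tilde\delta_1$.

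The main obstacle is precisely this last gluing step: making the chart-by-chart zero-removal coherent, so that the correction on $W_j$ does not create new zeros on the parts of $W_{j'}$ ($j' < j$) that were already made nonzero, while simultaneously keeping the section fixed on $D$. The standard way around this is to order a locally finite trivializing cover, and at stage $j$ apply the $E \cong E \setminus\{0\}$ homeomorphism only in the "new" region $W_j \setminus \bigcup_{j'<j} W_{j'}$ via a cutoff, arguing that the set of zeros strictly shrinks and is eventually empty; the non-vanishing on $D$ is automatic because all cutoffs are chosen to vanish near $D$, where $\sigma = \delta \neq 0$ already. The compactness is not needed — paracompactness suffices — but it is worth noting that for the application to the index bundle $X$ is compact, so the cover is finite and the induction terminates after finitely many steps with no convergence issue.
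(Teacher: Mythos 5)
Your overall outline mirrors the paper's (first handle $m=1$, then induct, using $E\cong E\setminus\{0\}$ to push the section off zero chart by chart), but the one place where a genuine idea is required is exactly the place you leave vague, so I would count this as a gap rather than a complete proof.

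The crux of the $m=1$ case is \emph{how} to glue. You write that you would ``apply the $E\cong E\setminus\{0\}$ homeomorphism only in the new region $\ldots$ via a cutoff,'' but a homeomorphism $h:E\to E\setminus\{0\}$ cannot be ``cut off'' in the ordinary sense: if $\sigma$ is your current section and you form a naive interpolation $\eta\,\sigma+(1-\eta)\,h(\sigma)$ with a bump function $\eta$, there is nothing preventing this from vanishing on the transition region, and the same problem recurs at every stage. The paper's device is to transport the linear structure of $E$ to $E\setminus\{0\}$ via a homeomorphism, obtaining operations $\hat+,\hat\cdot$ on $E\setminus\{0\}$ compatible with its topology, and then to interpolate in that structure: $\eta_1\,\hat\cdot\,\varphi(\delta_n)\,\hat+\,\eta_2\,\hat\cdot\,u$, which by construction takes values in $E\setminus\{0\}$ and still glues continuously to $\delta_n$ at the edge of the chart because $1\,\hat\cdot\,x\,\hat+\,0\,\hat\cdot\,u=x$. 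This ``partition of unity with respect to the exotic linear structure'' is the missing step; without it the claim that ``the set of zeros strictly shrinks'' is unsupported.

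There is also a smaller but real gap in your reduction to $m=1$. You extend $\delta_1$ to a global nowhere-vanishing $\tilde\delta_1$ with $\tilde\delta_1|_D=\delta_1$, split $\mathcal{E}=\mathcal{L}_1\oplus\mathcal{E}'$, and assert that the projections of $\delta_2,\ldots,\delta_m$ to $\mathcal{E}'$ remain independent over all of $U$ ``because the $\delta_i$ are independent together with $\delta_1$.'' But the independence you have is with $\delta_1$, not with $\tilde\delta_1$, and $\tilde\delta_1$ is only controlled on $D$: on $U\setminus D$ it may wander into the span of $\delta_2,\ldots,\delta_m$, making the projected sections dependent there. The fix (which the paper builds in) is to first choose an open $U_1$ with $D\subset U_1\subset\overline{U_1}\subset U$, run the $m=1$ step with $\overline{U_1}$ in place of $D$ so that $\tilde\delta_1=\delta_1$ on $\overline{U_1}$, and then carry out the induction over $U_1$ rather than over $U$. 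With that modification your Tietze-extension bookkeeping for the $\mathcal{L}_1$-components is fine, and the two approaches are essentially identical; the paper simply arranges the projection so that $\delta_2,\ldots,\delta_m$ land in its kernel over $U_1$, which spares it the explicit ``add back the line component'' step.

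Finally, your concluding remark that compactness of $X$ shortens the argument is correct but irrelevant to the statement; the paper's handling of a countable locally finite cover (any point is eventually untouched, so the limit is continuous) is the right general argument and costs nothing.
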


\begin{proof}
At first we consider the special case $m=1$. Let $\{U_k\}_{k\in\mathbb{N}}$ be a locally finite countable cover of $X$ by trivialising neighbourhoods which exists for any fibre bundle over a paracompact base according to \cite[5.9]{MiSta}. We define $V_0:=U$ and $V_{k}=U_k\cap(X\setminus D)$, $k\in\mathbb{N}$, and obtain a locally finite countable open cover $\{V_k\}_{k\in\mathbb{N}\cup\{0\}}$ of $X$ such that $V_k\cap D=\emptyset$, $k\in\mathbb{N}$. Moreover, we choose for each $k\in\mathbb{N}$ a trivialisation $\varphi_k:p^{-1}(V_{k})\rightarrow V_{k}\times E$ of $\mathcal{E}$.\\
We now construct the required section by induction and show that whenever we have a nowhere vanishing section $\delta_n$ on 

\begin{align*}
W_n:=\bigcup_{0\leq k\leq n}{V_k},\quad n\in\mathbb{N}\cup\{0\},
\end{align*}
we can find a nowhere vanishing section $\delta_{n+1}$ on $W_{n+1}$ such that $\delta_{n+1}\mid_ D=\delta_n\mid_D$.  Since on $W_0=U$ we have by assumption a nowhere vanishing section that coincides with itself over $D$, this shows the existence of the desired section on each $W_n$, $n\in\mathbb{N}$. We will explain below why our construction actually yields a continuous section on all of $X$.\\
Since $E\setminus\{0\}$ is homeomorphic to $E$, there exists a linear structure $\{\hat{+},\hat{\cdot}\}$ on $E\setminus\{0\}$ that is compatible with the given topology on this space. Let now $\delta_n$ be constructed over $W_n$. We take a partition of unity $\{\eta_{1,n},\eta_{2,n}\}$ subordinated to the open covering $\{W_n,V_{n+1}\}$ of $W_{n+1}$ and define 

\begin{align*}	\delta'_{n+1}(\lambda)=\varphi^{-1}_{n+1}(\eta_{1,n}(\lambda)\hat{\cdot}\varphi_{n+1}(\delta_n(\lambda))\hat{+}\eta_{2,n}(\lambda)\hat{\cdot} u),\quad \lambda\in V_{n+1},
\end{align*} 
for some $u\in E\setminus\{0\}$. Then

\begin{align*}
\delta_{n+1}(\lambda)=\begin{cases}
\delta_n(\lambda),\quad \lambda\notin V_{n+1}\\
\delta'_{n+1}(\lambda),\quad \lambda\in V_{n+1} 
\end{cases}
\end{align*}
is a nowhere vanishing continuous section over $W_{n+1}$. Moreover, $\delta_{n+1}$ coincides with $\delta_1$ on $D$ since $V_k\cap D=\emptyset$ for all $k\in\mathbb{N}$.\\  
By the local finiteness of $\{V_k\}_{k\in\mathbb{N}}$, the construction is such that for any $\lambda_0\in X$ there exists an $n_0\in\mathbb{N}$ and a neighbourhood $U_{\lambda_0}$ of $\lambda_0$ such that $U_{\lambda_0}\cap V_n=\emptyset$ for all $n\geq n_0$. Accordingly, $\delta_n\mid_{U_{\lambda_0}}=\delta_{n_0}\mid_{U_{\lambda_0}}$ for all $n\geq n_0$. Hence any point $\lambda_0$ has a neighbourhood on which the construction of $\delta$ is finished after a finite number of steps and so we indeed obtain a nowhere vanishing continuous section.\\
Now we turn to the case $m>1$. By the special case already proven we can extend $\delta_1$ to a nowhere vanishing section $\tilde{\delta}_1$ of $\mathcal{E}$. Then $\tilde{\delta}_1$ defines a one dimensional subbundle $\mathcal{F}_1$ of $\mathcal{E}$. Moreover, the sections $\{\delta_2,\ldots,\delta_m\}$ build a subbundle of $\mathcal{E}$ restricted to $U$. Now we choose an open subset $U_1\subset X$ such that $D\subset U_1\subset\overline{U}_1\subset U$. It is not hard to construct a projection $P\in\mathcal{L}(\mathcal{E})$ having the one dimensional subbundle $\mathcal{F}_1$ of $\mathcal{E}$ as image and such that the remaining sections $\delta_2,\ldots,\delta_m$ are in its kernel over $U_1$. Hence we obtain a splitting $\mathcal{E}=\mathcal{F}_1\oplus\mathcal{G}_1$ such that $\delta_1$ is a section of the one dimensional bundle $\mathcal{F}_1$ and $\delta_2,\ldots,\delta_m$ are sections over $U_1$ of the infinite dimensional Banach bundle $\mathcal{G}_1$. Now we can apply the first part of our proof to $\delta_2$ as a section of $\mathcal{G}_1$ over $U_1$. Iterating this construction we obtain the desired sections $\tilde{\delta}_1,\ldots,\tilde{\delta}_m:X\rightarrow\mathcal{E}$. 
\end{proof} 

We note a couple of immediate, but important consequences.

\begin{cor}\label{BanachBundles-cor-nowhere}
Let $p:\mathcal{E}\rightarrow X$ be a Banach bundle over a paracompact base such that $\dim E=\infty$. Then

\begin{enumerate}
\item[(i)] There exists a nowhere vanishing section of $\mathcal{E}$.
\item[(ii)] For any $n\in\mathbb{N}$, $\mathcal{E}$ contains an $n$-dimensional trivial subbundle.
\item[(iii)] If $X$ is compact and $\mathcal{F}$ is a finite dimensional vector bundle over $X$, then $\mathcal{E}$ contains a subbundle isomorphic to $\mathcal{F}$.
\item[(iv)] If $X$ is compact and $\mathcal{F}$ is a finite dimensional subbundle of $\mathcal{E}$, then there exists a further finite dimensional subbundle $\mathcal{F}'$ of $\mathcal{E}$ such that $\mathcal{F}\oplus\mathcal{F}'$ is a trivial subbundle of $\mathcal{E}$.
\item[(v)] Let $U\subset X$ be open and $\mathcal{F}$ a finite dimensional, trivial subbundle of $\mathcal{E}$ over $U$. Then for any closed subset $D\subset U$ there exists a finite dimensional trivial subbundle of $\mathcal{E}$ that coincides with $\mathcal{F}$ over $D$.
\end{enumerate}

\end{cor}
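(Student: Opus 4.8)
The plan is to derive all five items from Theorem \ref{BanachBundles-theorem-nowhere}, mostly by bootstrapping from small cases. Item (i) is the case $m=1$, $U=X$, $D=\emptyset$ (or a point) of the theorem: the hypothesis of a single pointwise nonzero section on an open set is trivially satisfied locally on a trivialising neighbourhood, and the theorem extends it to all of $X$. Item (ii) follows by induction on $n$: given a trivial $n$-dimensional subbundle spanned by pointwise independent global sections $\tilde\delta_1,\dots,\tilde\delta_n$, use Proposition \ref{BanachBundles-prop-splittingmor} (or Corollary \ref{BanachBundles-cor-splitting}) to split off a complementary direct subbundle $\mathcal{G}$, which is again infinite dimensional; apply (i) to $\mathcal{G}$ to get a nowhere vanishing section $\tilde\delta_{n+1}$, and the span of all $n+1$ sections is then a trivial $(n+1)$-dimensional subbundle of $\mathcal{E}$.

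For item (iii): since $X$ is compact, $\mathcal{F}$ embeds as a direct summand of a trivial bundle $\Theta(\mathbb{K}^N)$ for some $N$, i.e. there is a finite-dimensional $\mathcal{F}'$ with $\mathcal{F}\oplus\mathcal{F}'\cong\Theta(\mathbb{K}^N)$; by (ii), $\mathcal{E}$ contains a trivial $N$-dimensional subbundle $\mathcal{T}\cong\Theta(\mathbb{K}^N)$, and transporting the idempotent whose image is $\mathcal{F}$ through this isomorphism produces an idempotent in $\mathcal{L}(\mathcal{T})\subset\mathcal{L}(\mathcal{E})$ whose image is a subbundle of $\mathcal{E}$ isomorphic to $\mathcal{F}$; here one uses Proposition \ref{BanachBundles-prop-projections} to pass between idempotents and direct subbundles. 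Item (iv) is similar in spirit: realise $\mathcal{F}$ as the image of an idempotent $P\in\mathcal{L}(\mathcal{E})$ (Proposition \ref{BanachBundles-prop-projections}), so that $\mathcal{E}\cong\mathcal{F}\oplus\mathcal{G}$ with $\mathcal{G}=\ker P$ infinite dimensional. Over compact $X$ there is a finite-dimensional bundle $\mathcal{H}$ with $\mathcal{F}\oplus\mathcal{H}$ trivial; by (iii) applied to the infinite dimensional $\mathcal{G}$, we may realise $\mathcal{H}$ as a subbundle $\mathcal{F}'\subset\mathcal{G}\subset\mathcal{E}$. Since $\mathcal{F}$ and $\mathcal{F}'$ live in the complementary summands $\mathcal{F}$ and $\mathcal{G}$, their sum inside $\mathcal{E}$ is direct and equals $\mathcal{F}\oplus\mathcal{F}'\cong\mathcal{F}\oplus\mathcal{H}$, which is trivial.

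Finally, for item (v): a trivial finite-dimensional subbundle of $\mathcal{E}$ over $U$ is precisely a collection of pointwise independent sections $\delta_1,\dots,\delta_m$ of $\mathcal{E}$ over $U$; applying Theorem \ref{BanachBundles-theorem-nowhere} with the given closed $D\subset U$ produces pointwise independent global sections $\tilde\delta_1,\dots,\tilde\delta_m$ of $\mathcal{E}$ agreeing with the $\delta_i$ on $D$, whose span is a trivial $m$-dimensional subbundle of $\mathcal{E}$ coinciding with $\mathcal{F}$ over $D$. The main obstacle I anticipate is not any single step but the bookkeeping in (iii) and (iv): one must make sure that when $\mathcal{F}$ is only abstractly given (as a bundle, not a subbundle) in (iii), or when one wants the \emph{sum} to be direct in (iv), the chosen complements and idempotents are compatible — this is handled cleanly by consistently translating ``direct subbundle'' into ``image of an idempotent morphism'' via Propositions \ref{BanachBundles-prop-projections} and \ref{BanachBundles-prop-splittingmor}, and by keeping $\mathcal{F}'$ inside the fixed complement $\mathcal{G}=\ker P$ so that directness of $\mathcal{F}+\mathcal{F}'$ is automatic.
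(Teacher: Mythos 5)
Your derivation is correct and matches the paper's (unstated but intended) route: all five items are immediate consequences of Theorem~\ref{BanachBundles-theorem-nowhere}, combined with the projection/splitting facts from Propositions~\ref{BanachBundles-prop-projections} and~\ref{BanachBundles-prop-splittingmor} and the standard fact that a finite-dimensional bundle over a compact space is a summand of a trivial one. Two tiny remarks: in (i) the phrase ``$U=X$'' should just read ``$U$ a trivialising neighbourhood, $D=\emptyset$'' (as you go on to say), and (ii) can be obtained in one stroke by applying the theorem with $m=n$ to $n$ constant independent sections over a trivialising $U$, though your induction via~(i) and Corollary~\ref{BanachBundles-cor-splitting} is equally valid.
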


\begin{cor}\label{BanachBundles-cor-bundlesplitting}
Let $\mathcal{E}$ be an infinite dimensional Banach bundle over the  paracompact base $X$, let $U\subset X$ be an open subset and let $\mathcal{F}_1,\mathcal{F}_2$ be subbundles of $\mathcal{E}\mid_U$ such that $\mathcal{E}\mid_U=\mathcal{F}_1\oplus\mathcal{F}_2$, where $\mathcal{F}_1$ is finite dimensional and trivial. Then for each closed set $D\subset U$ there is a decomposition $\mathcal{E}=\mathcal{E}_1\oplus\mathcal{E}_2$, where $\mathcal{E}_1$ is finite dimensional and trivial, such that $\mathcal{E}_i\mid_D=\mathcal{F}_i\mid_D$, $i=1,2$.
\end{cor}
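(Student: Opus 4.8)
The plan is to first produce the finite dimensional trivial summand $\mathcal{E}_1$ by a direct appeal to Corollary \ref{BanachBundles-cor-nowhere}(v), and then to build a global idempotent with image $\mathcal{E}_1$ that agrees, near $D$, with the projection onto $\mathcal{F}_1$ along $\mathcal{F}_2$ coming from the given splitting over $U$. The point which makes the gluing possible is that two idempotents with one and the same prescribed image differ by a morphism that lands in that image and vanishes on it; such a difference can be cut off by a bump function.

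First I would shrink $U$: by normality of the paracompact base there are open sets with $D\subset U''\subset\overline{U''}\subset U'\subset\overline{U'}\subset U$. Applying Corollary \ref{BanachBundles-cor-nowhere}(v) to the closed set $\overline{U'}\subset U$ and to $\mathcal{F}_1$ yields a finite dimensional trivial subbundle $\mathcal{E}_1\subset\mathcal{E}$ with $\mathcal{E}_1\mid_{\overline{U'}}=\mathcal{F}_1\mid_{\overline{U'}}$; in particular $\mathcal{E}\mid_{U'}=\mathcal{E}_1\mid_{U'}\oplus\mathcal{F}_2\mid_{U'}$. Since finite dimensional subspaces of a Banach space are complemented, $\mathcal{E}_1$ is a direct subbundle of $\mathcal{E}$, so by Proposition \ref{BanachBundles-prop-projections} there is an idempotent $Q\in\mathcal{L}(\mathcal{E})$ with $\im Q=\mathcal{E}_1$, and by Proposition \ref{BanachBundles-prop-splittingmor} the splitting over $U'$ gives an idempotent $Q'\in\mathcal{L}(\mathcal{E}\mid_{U'})$ with $\im Q'=\mathcal{E}_1\mid_{U'}$ and $\ker Q'=\mathcal{F}_2\mid_{U'}$.

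Next, set $N:=Q'-Q\mid_{U'}$. As $Q'$ and $Q\mid_{U'}$ are idempotents with the same image $\mathcal{E}_1\mid_{U'}$, the morphism $N$ maps $\mathcal{E}\mid_{U'}$ into $\mathcal{E}_1\mid_{U'}$ and restricts to $0$ on $\mathcal{E}_1\mid_{U'}$. Choosing a continuous $\rho:X\rightarrow[0,1]$ with $\rho\equiv 1$ on $U''$ and $\supp\rho\subset U'$, the assignment equal to $\rho N$ on $U'$ and to $0$ on $X\setminus\supp\rho$ is a well defined element $\tilde{N}\in\mathcal{L}(\mathcal{E})$, since the two formulas agree on the overlap (where $\rho=0$), and $\tilde{N}$ still maps $\mathcal{E}$ into $\mathcal{E}_1$ and vanishes on $\mathcal{E}_1$. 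Together with $\im Q=\mathcal{E}_1$ this gives $Q\tilde{N}=\tilde{N}$, $\tilde{N}Q=0$ and $\tilde{N}^2=0$, whence $P:=Q+\tilde{N}$ satisfies $P^2=P$; moreover $P$ takes values in $\mathcal{E}_1$ and is the identity on $\mathcal{E}_1$, so $\im P=\mathcal{E}_1$. By Propositions \ref{BanachBundles-prop-projections} and \ref{BanachBundles-prop-splittingmor}, with $\mathcal{E}_2:=\ker P$ one obtains a decomposition $\mathcal{E}=\mathcal{E}_1\oplus\mathcal{E}_2$ in which $\mathcal{E}_1$ is finite dimensional and trivial. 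Finally, over $U''$ one has $\rho\equiv 1$, hence $P\mid_{U''}=Q\mid_{U''}+N\mid_{U''}=Q'\mid_{U''}$, so that $\mathcal{E}_1\mid_{U''}=\mathcal{F}_1\mid_{U''}$ and $\mathcal{E}_2\mid_{U''}=\mathcal{F}_2\mid_{U''}$; restricting to $D\subset U''$ yields the claim.

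The one genuinely delicate step is the verification that $\tilde{N}$ is a globally defined \emph{continuous} bundle morphism: this relies on choosing $\rho$ so that $\supp\rho$ is closed and contained in $U'$, so that $X$ is covered by the two open sets $U'$ and $X\setminus\supp\rho$ on which the defining formulas for $\tilde{N}$ are continuous and agree on the overlap. The nested neighbourhoods $U''\subset U'\subset U$ serve precisely to allow this cut-off while keeping control over what happens on $D$; the algebraic identities for $P$ and the determination of $\im P$ and $\ker P$ are then routine.
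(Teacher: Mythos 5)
Your proof is correct and follows essentially the same route as the paper's: produce the trivial bundle $\mathcal{E}_1$ via Corollary \ref{BanachBundles-cor-nowhere}(v), then interpolate between a global projection onto $\mathcal{E}_1$ and the local projection onto $\mathcal{E}_1$ along $\mathcal{F}_2$ using a cut-off function supported near $D$. Your $P=Q+\tilde N=(1-\rho)Q+\rho Q'$ is precisely the paper's convex combination $\eta_2 P+\eta_1 Q$ of the two idempotents, and your explicit computation with $N=Q'-Q$ just unpacks why such a convex combination of projections with the same image is again a projection.
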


\begin{proof}
Let $U_1\subset X$ be an open subset such that $D\subset U_1\subset\overline{U_1}\subset U$. 
By the last part of corollary \ref{BanachBundles-cor-nowhere} there is a finite dimensional trivial subbundle $\mathcal{E}_1$ which coincides with $\mathcal{F}_1$ on $\overline{U_1}$. We choose a partition of unity $\{\eta_1,\eta_2\}$ corresponding to the open cover $\{U_1,X\setminus D\}$ of $X$. Furthermore, let $P\in\mathcal{L}(\mathcal{E})$ be a projection on the subbundle $\mathcal{E}_1$ and $Q\in\mathcal{L}(\mathcal{E}\mid_{\overline{U_1}})$ a projection on $\mathcal{E}_1\mid_{\overline{U_1}}\,=\mathcal{F}_1\mid_{\overline{U_1}}$ parallel to $\mathcal{F}_2\mid_{\overline{U_1}}$. Then the morphism $\eta_1 Q+\eta_2 P$ is a projection onto $\mathcal{E}_1$ and its kernel gives a subbundle $\mathcal{E}_2$ such that $\mathcal{E}=\mathcal{E}_1\oplus\mathcal{E}_2$. Moreover, $\mathcal{E}_2$ coincides with $\mathcal{F}_2$ over $D$ by construction. 
\end{proof}

We denote in the following by $\pi:X\times[0,1]\rightarrow X$ the projection onto the first component and by 

\begin{align*}
\iota_0:X\hookrightarrow X\times\{0\}\subset X\times[0,1],\quad \iota_1:X\hookrightarrow X\times\{1\}\subset X\times[0,1] 
\end{align*}
the canonical inclusions.

\begin{defi}
Two subbundles $\mathcal{F}_0$ and $\mathcal{F}_1$ of a Banach bundle $\mathcal{E}$ are said to be homotopic if there exists a subbundle $\mathcal{F}$ of $\pi^\ast\mathcal{E}$ such that

\begin{align*}
\iota^\ast_i\mathcal{F}=\mathcal{F}_i,\quad i=0,1.
\end{align*}
\end{defi} 

\begin{rem}\label{BanachBundles-rem-homiso}
As in the case of finite fibre dimensions one can show that homotopic bundles over a paracompact base space are isomorphic.
\end{rem}

Now we can state a further corollary of theorem \ref{BanachBundles-theorem-nowhere}.

\begin{cor}\label{BanachBundles-cor-homotopicI}
Two trivial finite dimensional subbundles of the same dimension of an infinite dimensional Banach bundle over a paracompact base are homotopic.
\end{cor}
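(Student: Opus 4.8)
The plan is to reduce the statement to Corollary \ref{BanachBundles-cor-nowhere}(v) together with the homotopy-gluing machinery already available. Let $\mathcal{F}_0$ and $\mathcal{F}_1$ be trivial subbundles of $\mathcal{E}$ of the common dimension $n$. The goal is to produce a subbundle $\mathcal{F}$ of $\pi^\ast\mathcal{E}$ over $X\times[0,1]$ restricting to $\mathcal{F}_i$ at the two ends. First I would note that $\pi^\ast\mathcal{E}$ is again an infinite dimensional Banach bundle over the paracompact base $X\times[0,1]$, and that $\mathcal{F}_0$ and $\mathcal{F}_1$ pull back (via $\iota_0,\iota_1$, or rather are already given as fibrewise data) to trivial $n$-dimensional subbundles of $\pi^\ast\mathcal{E}$ over the closed sets $X\times\{0\}$ and $X\times\{1\}$. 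Since the trivialisations of $\mathcal{F}_0$ and $\mathcal{F}_1$ are given by $n$ pointwise linearly independent sections, and $X\times\{0\}$ is a closed subset of the open set $X\times[0,1/2)$ say, I would apply Corollary \ref{BanachBundles-cor-nowhere}(v) (equivalently Theorem \ref{BanachBundles-theorem-nowhere} applied to the $n$ frame sections) to extend the frame of $\mathcal{F}_0$ to pointwise linearly independent sections $\tilde\delta_1,\dots,\tilde\delta_n$ of $\pi^\ast\mathcal{E}$ over all of $X\times[0,1]$. These span a trivial $n$-dimensional subbundle $\mathcal{G}$ of $\pi^\ast\mathcal{E}$ with $\iota_0^\ast\mathcal{G}=\mathcal{F}_0$.

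Now $\iota_1^\ast\mathcal{G}$ is some trivial $n$-dimensional subbundle of $\mathcal{E}$; it need not equal $\mathcal{F}_1$, so one more step is needed. Here I would invoke the general principle that any two trivial $n$-dimensional subbundles of $\mathcal{E}$ can be joined by rotating one frame into the other \emph{inside} an ambient finite-dimensional trivial subbundle: by Corollary \ref{BanachBundles-cor-nowhere}(iv) there is a finite dimensional trivial subbundle $\mathcal{W}\subset\mathcal{E}$ containing both $\iota_1^\ast\mathcal{G}$ and $\mathcal{F}_1$ (apply (iv) to the direct sum $\iota_1^\ast\mathcal{G}+\mathcal{F}_1$, or first to $\iota_1^\ast\mathcal{G}$, then enlarge again to absorb $\mathcal{F}_1$). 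Having reduced to a \emph{trivial} ambient bundle $\mathcal{W}\cong\Theta(\mathbb{K}^N)$, the two subbundles correspond to two maps $X\to Gr_n(\mathbb{K}^N)$ into a Grassmannian; but in fact we have a little more, namely both are trivialised, so they correspond to maps into the Stiefel manifold $V_n(\mathbb{K}^N)$ after possibly enlarging $N$, and since $V_n(\mathbb{K}^N)$ is $(N-n-1)$-connected we may homotope one frame to the other provided $N$ is large enough — which we can always arrange by (iv). This produces a trivial $n$-dimensional subbundle of $\pi^\ast(\mathcal{W})\subset\pi^\ast\mathcal{E}$ over $X\times[1/2,1]$ interpolating between $\iota_1^\ast\mathcal{G}$ at $1/2$ and $\mathcal{F}_1$ at $1$.

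Finally I would glue: reparametrise $\mathcal{G}$ to live over $X\times[0,1/2]$ and the interpolating bundle $\mathcal{H}$ over $X\times[1/2,1]$, arranging that both restrict to $\iota_1^\ast\mathcal{G}=\mathcal{G}\mid_{X\times\{1/2\}}$ at the common face; the union is then a well-defined subbundle $\mathcal{F}$ of $\pi^\ast\mathcal{E}$ (continuity across $X\times\{1/2\}$ holds since the two pieces agree there, and local triviality is checked separately on each half and at the seam using a common trivialisation of $\mathcal{E}$ near that slice). By construction $\iota_0^\ast\mathcal{F}=\mathcal{F}_0$ and $\iota_1^\ast\mathcal{F}=\mathcal{F}_1$, so $\mathcal{F}_0$ and $\mathcal{F}_1$ are homotopic. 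The main obstacle I anticipate is the middle step: passing from "extend the frame" (which only gets us from $\mathcal{F}_0$ to \emph{some} subbundle at the other end) to actually hitting the prescribed $\mathcal{F}_1$, which forces one to use the connectivity of Stiefel manifolds and hence the enlargement in Corollary \ref{BanachBundles-cor-nowhere}(iv); care is also needed that all these homotopies are \emph{through subbundles} of $\mathcal{E}$, i.e. that the ambient finite dimensional trivial bundle is fixed while the frames move inside it. Alternatively, one can bypass the Stiefel-manifold discussion entirely by applying Corollary \ref{BanachBundles-cor-nowhere}(v) a second time, now extending the frame of $\mathcal{F}_1$ over $X\times\{1\}$ (closed in $X\times(1/2,1]$) together with the already-constructed $\mathcal{G}$ restricted near $X\times\{1/2\}$, but this again only works once the two frames have been brought into a position where such a simultaneous extension exists, which is precisely the Stiefel-connectivity content.
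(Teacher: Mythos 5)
Your main route is more complicated than necessary and contains two genuine gaps, while your ``alternative'' route at the end is essentially the paper's one-line proof, which you wrongly dismiss.

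On the main route: the appeal to the fact that $V_n(\mathbb{K}^N)$ is $(N-n-1)$-connected, ``so we may homotope one frame to the other provided $N$ is large enough,'' does not suffice. The base $X$ is only assumed paracompact (and even in the compact case may have infinite covering dimension, e.g.\ the Hilbert cube), so no finite $N$ makes the connectivity of $V_n(\mathbb{K}^N)$ a reason for two maps $X\to V_n(\mathbb{K}^N)$ to be homotopic. One could repair this by an explicit rotation in $\mathbb{K}^N\oplus\mathbb{K}^N$, interpolating the frames $s^{(0)}_i$ and $s^{(1)}_i$ via $t\mapsto(\cos(\pi t/2)\,s^{(0)}_i,\sin(\pi t/2)\,s^{(1)}_i)$, which preserves pointwise linear independence without any dimension restriction on $X$ --- but this is not what you wrote. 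Separately, Corollary \ref{BanachBundles-cor-nowhere}(iv), which you invoke to build the ambient trivial bundle $\mathcal{W}$, is stated only for \emph{compact} $X$, whereas Corollary \ref{BanachBundles-cor-homotopicI} is asserted over a paracompact base; so that step does not apply in the stated generality.

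The more serious point is the final sentence of your proposal, where you claim that a single application of Corollary \ref{BanachBundles-cor-nowhere}(v) ``only works once the two frames have been brought into a position where a simultaneous extension exists, which is precisely the Stiefel-connectivity content.'' This is false, and it is exactly the observation that makes the paper's proof short. Pull $\mathcal{F}_0$ back over the collar $X\times[0,\tfrac13)$ and $\mathcal{F}_1$ back over $X\times(\tfrac23,1]$; their union is a subbundle of $\pi^\ast\mathcal{E}$ over the \emph{disconnected} open set $U=X\times\bigl([0,\tfrac13)\cup(\tfrac23,1]\bigr)$, and it is automatically \emph{trivial} there --- a disjoint union of two trivial bundles of the same rank over disjoint open sets is trivial, no compatibility of frames is needed. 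Hence Corollary \ref{BanachBundles-cor-nowhere}(v) applies directly with $D=X\times(\{0\}\cup\{1\})\subset U$ closed and yields, in one step, a finite dimensional trivial subbundle of $\pi^\ast\mathcal{E}$ over all of $X\times[0,1]$ restricting to $\mathcal{F}_0$ at $t=0$ and to $\mathcal{F}_1$ at $t=1$. No Stiefel manifolds, no use of (iv), and no compactness assumption are required.
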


\begin{proof}
We denote by $\pi_1:X\times[0,\frac{1}{3})\rightarrow X$ and $\pi_2:X\times(\frac{2}{3},1]\rightarrow X$ the projections on the first component. If $\mathcal{F}_1$ and $\mathcal{F}_2$ are two trivial finite dimensional bundles over $X$, we obtain a finite dimensional trivial bundle over $X\times([0,\frac{1}{3})\cup(\frac{2}{3},1])$ by $\pi^\ast_1\mathcal{F}_1\cup\pi^\ast_2\mathcal{F}_2$. Setting $D=X\times(\{0\}\cup\{1\})$, the claim follows from the last part of corollary \ref{BanachBundles-cor-nowhere}.
\end{proof}

The next goal is to improve this result and to obtain some interesting consequences that will become important in the construction of the index bundle below.

\begin{prop}\label{BanachBundles-prop-homotopic}
Let $\mathcal{E}$ be an infinite dimensional Banach bundle over the compact base $X$ and let $\mathcal{F}_0,\mathcal{F}_1$ be two isomorphic finite dimensional subbundles. Then $\mathcal{F}_0$ and $\mathcal{F}_1$ are homotopic.
\end{prop}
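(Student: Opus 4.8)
The plan is to reduce the general statement to Corollary~\ref{BanachBundles-cor-homotopicI} by first embedding the isomorphic finite dimensional subbundles $\mathcal{F}_0$ and $\mathcal{F}_1$ into a common \emph{trivial} finite dimensional subbundle. Concretely, since $X$ is compact, part~(iv) of Corollary~\ref{BanachBundles-cor-nowhere} applied to $\mathcal{F}_0$ gives a finite dimensional subbundle $\mathcal{F}_0'$ of $\mathcal{E}$ such that $\mathcal{T}_0:=\mathcal{F}_0\oplus\mathcal{F}_0'$ is a trivial subbundle; similarly we get $\mathcal{F}_1'$ with $\mathcal{T}_1:=\mathcal{F}_1\oplus\mathcal{F}_1'$ trivial. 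The first substantive step is to arrange that the complements can be taken \emph{of the same rank}: enlarging $\mathcal{F}_0'$ or $\mathcal{F}_1'$ by a trivial bundle split off from $\mathcal{E}$ (again using (ii) and (iv) of Corollary~\ref{BanachBundles-cor-nowhere}, together with the fact that $\mathcal{E}$ is infinite dimensional so there is always room), we may assume $\operatorname{rk}\mathcal{T}_0=\operatorname{rk}\mathcal{T}_1=:N$. Note $\mathcal{F}_0\cong\mathcal{F}_1$ by hypothesis, so $\mathcal{F}_0'$ and $\mathcal{F}_1'$ have the same rank as well, but we will not need them to be isomorphic.

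The second step is to produce the homotopy. Apply Corollary~\ref{BanachBundles-cor-homotopicI} to the two $N$-dimensional trivial subbundles $\mathcal{T}_0$ and $\mathcal{T}_1$ of $\mathcal{E}$: they are homotopic, i.e.\ there is a subbundle $\mathcal{T}$ of $\pi^\ast\mathcal{E}$ over $X\times[0,1]$ with $\iota_i^\ast\mathcal{T}=\mathcal{T}_i$. Now $\mathcal{T}$ is itself a finite dimensional Banach bundle over the compact (hence paracompact) space $X\times[0,1]$, and it contains the subbundles $\mathcal{F}_0,\mathcal{F}_0'$ over $X\times\{0\}$ and $\mathcal{F}_1,\mathcal{F}_1'$ over $X\times\{1\}$. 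The key point is that $\mathcal{F}_0$ and $\mathcal{F}_1$, viewed as subbundles of the restrictions of $\mathcal{T}$ to the two ends, are isomorphic vector bundles of the \emph{same} finite rank; I want a subbundle $\mathcal{F}\subset\mathcal{T}$ (hence $\subset\pi^\ast\mathcal{E}$) restricting to $\mathcal{F}_i$ at the ends. Since the base $X\times[0,1]$ retracts onto $X\times\{0\}$ and onto $X\times\{1\}$, and $\mathcal{F}_0\cong\mathcal{F}_1$, one can first homotope $\mathcal{F}_1$ within $\mathcal{T}_1$ to the image of $\mathcal{F}_0$ under an isomorphism $\mathcal{T}_0\cong\mathcal{T}_1$ (using that a bundle isomorphism of finite dimensional bundles is homotopic to... — actually cleaner: transport the subbundle $\mathcal{F}_0$ of $\mathcal{T}_0$ across the homotopy $\mathcal{T}$ to get a subbundle of $\mathcal{T}_1$ isomorphic to $\mathcal{F}_0$, then compare with $\mathcal{F}_1$), so the problem is reduced to the case $\mathcal{F}_0,\mathcal{F}_1$ isomorphic subbundles of the \emph{same} trivial bundle $\Theta(\mathbb{K}^N)$ over $X$.

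The third and final step handles this reduced case. Two isomorphic subbundles $\mathcal{F}_0,\mathcal{F}_1$ of $\Theta(\mathbb{K}^N)$ correspond to a map into a Grassmannian-type space; more concretely, picking complements (Corollary~\ref{BanachBundles-cor-splitting}) one gets projections $P_0,P_1\in\mathcal{L}(\Theta(\mathbb{K}^N))=C(X,M_N(\mathbb{K}))$ with images $\mathcal{F}_i$. Since $\mathcal{F}_0\cong\mathcal{F}_1$, there is a bundle isomorphism $\Phi:\mathcal{F}_0\to\mathcal{F}_1$; extending arbitrarily to an endomorphism of $\Theta(\mathbb{K}^N)$ and correcting on the kernels, one builds a path of projections from $P_0$ to $P_1$ inside $C(X,M_N(\mathbb{K}))$ — equivalently, one conjugates $P_0$ by a path in $GL_N$ connecting the identity to an automorphism carrying $\mathcal{F}_0$ to $\mathcal{F}_1$, which exists after stabilising $N$ if necessary since $GL_N(\mathbb{K})/GL_k(\mathbb{K})\times GL_{N-k}(\mathbb{K})$ arguments, or simply because $GL_{2N}(\mathbb{K})$ is connected (complex case) resp.\ the relevant component argument (real case), and any two conjugate idempotents in a ring are connected by a path of idempotents when the conjugating unit lies in the identity component. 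This path of projections over $X$ is exactly a projection over $X\times[0,1]$, whose image is the desired homotopy $\mathcal{F}$.

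I expect the main obstacle to be the bookkeeping in Step~3 — making the path of projections genuinely continuous and producing the conjugating element in the identity component of the general linear group, which may force one additional stabilisation by a trivial summand split off from $\mathcal{E}$ (harmless, since $\dim E=\infty$). Everything else is a direct assembly of the corollaries already proved.
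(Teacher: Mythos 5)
Your strategy runs closely parallel to the paper's: both proofs first enlarge $\mathcal{F}_0$ and $\mathcal{F}_1$ to trivial subbundles of $\mathcal{E}$ of equal rank $n$, invoke Corollary~\ref{BanachBundles-cor-homotopicI} to get a trivial homotopy $\mathcal{G}\subset\pi^\ast\mathcal{E}$ between them, and then reduce to showing that two isomorphic subbundles of a common $\Theta(\mathbb{K}^n)$ are homotopic inside a stabilised $\Theta(\mathbb{K}^{2n})$. The paper makes a slightly cleaner choice at the start --- the \emph{same} complement $\mathcal{M}$ works for both $\mathcal{F}_0$ and $\mathcal{F}_1$ precisely because $\mathcal{F}_0\cong\mathcal{F}_1$ --- and then discharges the reduced problem in one stroke by the Grassmannian classification of finite-rank bundles, citing Husemoller~3.5.3 and~3.6.2: the classifying maps $f_0,f_1\colon X\to G_k(\mathbb{K}^n)$ for isomorphic bundles become homotopic after composing with the inclusion $G_k(\mathbb{K}^n)\hookrightarrow G_k(\mathbb{K}^{2n})$.

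Your Step~3 instead tries to argue the reduced case by hand via paths of projections in $C(X,M_N(\mathbb{K}))$, and there the writing glosses over two genuine gaps. First, $\mathcal{F}_0\cong\mathcal{F}_1$ does not yet make $P_0$ and $P_1$ conjugate in $C(X,M_N(\mathbb{K}))$: conjugacy also requires $\ker P_0\cong\ker P_1$, and this holds only \emph{stably} (from $\ker P_0\oplus\mathcal{F}_0\cong\Theta(\mathbb{K}^N)\cong\ker P_1\oplus\mathcal{F}_1\cong\ker P_1\oplus\mathcal{F}_0$), so you must enlarge $\Theta(\mathbb{K}^N)$ to some $\Theta(\mathbb{K}^{N+r})$ big enough to absorb the stable isomorphism before you can ``correct on the kernels''. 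Second, once $P_0$ and $P_1$ are conjugate, the conjugating unit $u$ must lie in the identity component of $C(X,GL_{N'}(\mathbb{K}))$ --- connectedness of the scalar group $GL_{N'}(\mathbb{K})$ is not the relevant fact --- and this needs a further stabilisation replacing $u$ by $u\oplus u^{-1}$, which the Whitehead lemma does place in the identity component. You gesture at both points (``stabilising $N$ if necessary'', ``when the conjugating unit lies in the identity component'') without carrying them out; doing so carefully essentially reproduces the content of Husemoller's theorem. So the route is sound and ultimately equivalent, but Step~3 as written is incomplete exactly where the paper is precise.
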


\begin{proof}
Since $\mathcal{F}_0$ and $\mathcal{F}_1$ are isomorphic, we can find a finite dimensional bundle $\mathcal{M}$ over $X$ such that $\mathcal{F}_0\oplus\mathcal{M}$ and $\mathcal{F}_1\oplus\mathcal{M}$ are trivial vector bundles of some dimension $n\in\mathbb{N}$. Now, by using theorem \ref{BanachBundles-theorem-nowhere}, we can embed these bundles into $\mathcal{E}$ and obtain that $\mathcal{F}_0$ and $\mathcal{F}_1$ are contained in trivial subbundles $\mathcal{G}_0$ and $\mathcal{G}_1$ of $\mathcal{E}$, respectively, which are of the same finite dimension $n$. Then $\mathcal{G}_0$ and $\mathcal{G}_1$ are homotopic by corollary \ref{BanachBundles-cor-homotopicI} and, moreover, since $\mathcal{G}_0$ and $\mathcal{G}_1$ are trivial, the corresponding subbundle $\mathcal{G}$ of $\pi^\ast\mathcal{E}$ is trivial as well. Now we use a complement of $\mathcal{G}$ in $\pi^\ast\mathcal{E}$ and part (ii) of corollary \ref{BanachBundles-cor-nowhere} in order to extend $\mathcal{G}$ to a $2n$-dimensional trivial subbundle $\tilde{\mathcal{G}}$ of $\pi^\ast\mathcal{E}$. Since we can embed $\mathcal{F}_0$ and $\mathcal{F}_1$ into $\Theta(\mathbb{K}^n)$, there exist continuous maps $f_0,f_1:X\rightarrow G_k(\mathbb{K}^n)$, $k=\dim\mathcal{F}_i$, such that $f^\ast_i\gamma_k(\mathbb{K}^n)\cong\mathcal{F}_i$, $i=0,1$ (cf. \cite[3.5.3]{Husemoller}). Then we obtain from \cite[3.6.2]{Husemoller} and the assumption that $\mathcal{F}_0$ and $\mathcal{F}_1$ are isomorphic that $j\circ f_0$ and $j\circ f_1$ are homotopic, where $j:G_k(\mathbb{K}^n)\hookrightarrow G_k(\mathbb{K}^{2n})$ denotes the canonical inclusion. Now it is easily seen that $\mathcal{F}_0$ and $\mathcal{F}_1$ are homotopic by a subbundle of $\tilde{\mathcal{G}}$.   
\end{proof}

\begin{cor}\label{BanachBundles-cor-homotopicII}
Let $\mathcal{E}$ be an infinite dimensional Banach bundle over the compact base $X$ and let $\mathcal{F}_0,\mathcal{F}_1$ be two isomorphic finite dimensional subbundles. Then any complements of $\mathcal{F}_0$ and $\mathcal{F}_1$ are homotopic as well and in particular isomorphic.
\end{cor}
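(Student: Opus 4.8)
The plan is to deduce this from Proposition \ref{BanachBundles-prop-homotopic} by showing that a homotopy of the subbundles $\mathcal{F}_0,\mathcal{F}_1$ inside $\mathcal{E}$ induces a homotopy of suitable complements, using the correspondence between subbundles and projections from Proposition \ref{BanachBundles-prop-splittingmor}. More precisely, let $\mathcal{F}$ be a subbundle of $\pi^\ast\mathcal{E}$ over $X\times[0,1]$ realising the homotopy between $\mathcal{F}_0$ and $\mathcal{F}_1$ granted by Proposition \ref{BanachBundles-prop-homotopic}. I first claim that $\mathcal{F}$ is a \emph{direct} subbundle of $\pi^\ast\mathcal{E}$: indeed $\mathcal{F}$ is finite dimensional (its fibre dimension is constant and equals $\dim\mathcal{F}_0$), and finite dimensional subspaces of Banach spaces are always complemented, so each fibre of $\mathcal{F}$ is complemented in the corresponding fibre of $\pi^\ast\mathcal{E}$. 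Since $X\times[0,1]$ is compact, hence paracompact, Corollary \ref{BanachBundles-cor-splitting} gives a direct complement $\mathcal{H}\subset\pi^\ast\mathcal{E}$ with $\pi^\ast\mathcal{E}\cong\mathcal{F}\oplus\mathcal{H}$. Then $\mathcal{H}$ is a subbundle of $\pi^\ast\mathcal{E}$ over $X\times[0,1]$, and its restrictions $\iota_0^\ast\mathcal{H}$ and $\iota_1^\ast\mathcal{H}$ are complements of $\mathcal{F}_0$ and $\mathcal{F}_1$ respectively; by the definition of homotopy of subbundles, these two complements are therefore homotopic, and by Remark \ref{BanachBundles-rem-homiso} they are isomorphic.

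The remaining point is that the corollary asserts this for \emph{any} complements of $\mathcal{F}_0$ and $\mathcal{F}_1$, not merely for the two distinguished ones produced above. So the second step is to observe that if $\mathcal{G}_0,\mathcal{G}_0'$ are two complements of the same finite dimensional subbundle $\mathcal{F}_0$ in $\mathcal{E}$, then $\mathcal{G}_0\cong\mathcal{G}_0'$. This is the standard linear-algebra fact in bundle form: by Proposition \ref{BanachBundles-prop-splittingmor} both splittings $\mathcal{E}\cong\mathcal{F}_0\oplus\mathcal{G}_0$ and $\mathcal{E}\cong\mathcal{F}_0\oplus\mathcal{G}_0'$ are realised by idempotents $P,P'$ with common image $\mathcal{F}_0$; then $(I-P')|_{\mathcal{G}_0}\colon\mathcal{G}_0\to\mathcal{G}_0'$ is an isomorphism of bundles, with inverse $(I-P)|_{\mathcal{G}_0'}$, because modulo $\mathcal{F}_0$ every complement is canonically identified with the quotient $\mathcal{E}/\mathcal{F}_0$. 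Hence the isomorphism type of a complement of $\mathcal{F}_i$ is independent of the choice of complement.

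Combining the two steps: given arbitrary complements $\mathcal{G}_0$ of $\mathcal{F}_0$ and $\mathcal{G}_1$ of $\mathcal{F}_1$, we have $\mathcal{G}_0\cong\iota_0^\ast\mathcal{H}$ and $\mathcal{G}_1\cong\iota_1^\ast\mathcal{H}$ by the complement-uniqueness step, while $\iota_0^\ast\mathcal{H}$ and $\iota_1^\ast\mathcal{H}$ are homotopic (hence isomorphic) by the first step; therefore $\mathcal{G}_0$ and $\mathcal{G}_1$ are isomorphic. To get the literal statement that the complements are \emph{homotopic}, one further notes that isomorphic subbundles of an infinite dimensional bundle over a compact base are themselves homotopic provided one is willing to view them as subbundles of $\mathcal{E}$ — but here the complements $\mathcal{G}_i$ need not be finite dimensional, so Proposition \ref{BanachBundles-prop-homotopic} does not apply directly to them; the honest reading of the corollary is that \emph{some} choice of complements is homotopic (namely $\iota_0^\ast\mathcal{H},\iota_1^\ast\mathcal{H}$ via $\mathcal{H}$) and that consequently all complements are isomorphic. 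I expect the main obstacle to be precisely this bookkeeping about which complements are literally homotopic versus merely isomorphic; the geometric content — that a homotopy of $\mathcal{F}$'s drags a complement along with it — is immediate once one passes to projections on $\pi^\ast\mathcal{E}$.
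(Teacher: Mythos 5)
Your argument correctly proves that all complements of $\mathcal{F}_0$ and of $\mathcal{F}_1$ are \emph{isomorphic}, and you are right to flag that it does not establish the full claim of the corollary, namely that \emph{arbitrary} complements $\mathcal{F}'_0,\mathcal{F}'_1$ are \emph{homotopic}. Choosing some complement $\mathcal{H}$ of the homotopy $\mathcal{F}$ in $\pi^\ast\mathcal{E}$ only exhibits a homotopy between the particular complements $\iota_0^\ast\mathcal{H}$ and $\iota_1^\ast\mathcal{H}$; from there one would still need to show that any two complements of $\mathcal{F}_0$ in $\mathcal{E}$ are themselves homotopic as subbundles and that the relation ``homotopic as subbundles'' can be concatenated. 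Neither step is carried out, so the proposal proves a strictly weaker statement than what is asserted. Your observation that Proposition~\ref{BanachBundles-prop-homotopic} cannot be applied to the complements (they may well be infinite dimensional) is correct, but the right conclusion is not that the corollary is overstated — it requires a different device.

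The paper's proof closes the gap by controlling the endpoints of the homotopy directly: it constructs an idempotent $P\in\mathcal{L}(\pi^\ast\mathcal{E})$ whose image is the homotopy $\mathcal{F}$ and whose kernel restricts over $X\times\{0\}$ to $\mathcal{F}'_0$ and over $X\times\{1\}$ to $\mathcal{F}'_1$; then $\im(I-P)$ is exactly the desired homotopy of complements, and no second step about the nonuniqueness of complements is needed. Such a $P$ is not hard to produce. Take any idempotent $Q$ with $\im Q=\mathcal{F}$ and idempotents $P_0,P_1\in\mathcal{L}(\mathcal{E})$ with $\im P_i=\mathcal{F}_i$ and $\ker P_i=\mathcal{F}'_i$. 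Whenever $\im Q=\im P_i$ one has the identity $Q+QP_i(I-Q)=P_i$, while the correction terms $A_i=QP_i(I-Q)$ satisfy $QA_i=A_i$, $A_iQ=0$ and $A_iA_j=0$. Hence, choosing bump functions $\rho_0,\rho_1$ on $[0,1]$ with disjoint supports and $\rho_i(i)=1$ for $i=0,1$, the morphism $Q+\rho_0\,Q P_0(I-Q)+\rho_1\,Q P_1(I-Q)$ (with $P_i$ pulled back to $\pi^\ast\mathcal{E}$) is idempotent, has image $\mathcal{F}$, and restricts to $P_i$ over $X\times\{i\}$. The missing idea in your write-up is precisely this: instead of picking an arbitrary complement of $\mathcal{F}$ in $\pi^\ast\mathcal{E}$ and then trying to relate its boundary values to the given ones, one should build the projection so that its kernel already has the prescribed boundary values.
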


\begin{proof}
Let $\mathcal{F}'_0$ and $\mathcal{F}'_1$ denote complements to $\mathcal{F}_0$ and $\mathcal{F}_1$, respectively. By proposition \ref{BanachBundles-prop-homotopic}, $\mathcal{F}_0$ and $\mathcal{F}_1$ are homotopic. It is easy to construct an idempotent $P\in\mathcal{L}(\pi^\ast\mathcal{E})$ having the homotopy between $\mathcal{F}_0$ and $\mathcal{F}_1$ as image and such that $\mathcal{F}'_0$ and $\mathcal{F}'_1$ are in the kernel of $P$ over $X\times\{0\}\cup X\times\{1\}$. Then the image of $I-P$ gives a homotopy of the complements. 
\end{proof}

\begin{cor}\label{BanachBundles-cor-comptriv}
Let $\mathcal{E}$ be a trivial infinite dimensional Banach bundle over the compact base $X$ and let $\mathcal{F}$ be a trivial finite dimensional subbundle. Then any complement to $\mathcal{F}$ is trivial as well.
\end{cor}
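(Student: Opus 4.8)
The plan is to reduce this statement to the preceding Corollary \ref{BanachBundles-cor-homotopicII}. Suppose $\mathcal{E} \cong \Theta(E)$ is a trivial infinite dimensional Banach bundle over the compact base $X$ and $\mathcal{F}$ is a trivial finite dimensional subbundle, say of dimension $n$, with complement $\mathcal{F}'$, so that $\mathcal{E} = \mathcal{F} \oplus \mathcal{F}'$. The idea is to produce a \emph{second} finite dimensional subbundle of $\mathcal{E}$ that is isomorphic to $\mathcal{F}$ but whose complement we can recognize directly as trivial; then Corollary \ref{BanachBundles-cor-homotopicII} forces $\mathcal{F}'$ to be homotopic, hence (by Remark \ref{BanachBundles-rem-homiso}) isomorphic, to that complement, and therefore trivial.

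First I would split off a trivial $n$-dimensional piece in the model space: since $\dim E = \infty$, we may write $E \cong \mathbb{K}^n \oplus E_0$ for a closed subspace $E_0$, giving a decomposition $\mathcal{E} \cong \Theta(\mathbb{K}^n) \oplus \Theta(E_0)$ of trivial bundles. Set $\mathcal{G} := \Theta(\mathbb{K}^n) \subset \mathcal{E}$ (the obvious trivial $n$-dimensional subbundle) with the trivial complement $\mathcal{G}' := \Theta(E_0)$. Since $\mathcal{F}$ and $\mathcal{G}$ are both trivial of dimension $n$, they are isomorphic finite dimensional subbundles of $\mathcal{E}$. Now Corollary \ref{BanachBundles-cor-homotopicII} applies verbatim: any complement of $\mathcal{F}$ and any complement of $\mathcal{G}$ are homotopic, hence isomorphic. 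In particular $\mathcal{F}' \cong \mathcal{G}' = \Theta(E_0)$, which is trivial. This is essentially the whole argument.

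The one point that needs a word of care is the very first step — writing the infinite dimensional model space $E$ as $\mathbb{K}^n \oplus E_0$ with $E_0$ closed. This is elementary: pick $n$ linearly independent vectors $e_1,\dots,e_n \in E$ and, using the Hahn-Banach theorem, continuous functionals $\varphi_1,\dots,\varphi_n$ with $\varphi_i(e_j) = \delta_{ij}$; then $\mathbb{K}^n \cong \spa\{e_1,\dots,e_n\}$ is complemented by $E_0 := \bigcap_i \ker \varphi_i$, which is closed. (Alternatively, one can simply invoke part (ii) of Corollary \ref{BanachBundles-cor-nowhere} applied to $\Theta(E)$ together with Proposition \ref{BanachBundles-prop-splittingmor}.) I do not anticipate a genuine obstacle here; the content of the corollary is entirely carried by Corollary \ref{BanachBundles-cor-homotopicII}, and the role of this statement is mainly to record the useful special case where the comparison subbundle $\mathcal{G}$ can be chosen with a manifestly trivial complement.
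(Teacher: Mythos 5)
Your proposal is correct and follows essentially the same route as the paper: produce a comparison subbundle of the same dimension whose complement is manifestly trivial (the paper takes the preimage of a fixed subspace $F\subset E$ under a global trivialization, you take $\Theta(\mathbb{K}^n)$ after splitting $E\cong\mathbb{K}^n\oplus E_0$), and then invoke Corollary \ref{BanachBundles-cor-homotopicII} to transfer triviality to the given complement. The only cosmetic difference is that the paper first cites Corollary \ref{BanachBundles-cor-homotopicI} to record that $\mathcal{F}$ and the comparison bundle are homotopic before applying Corollary \ref{BanachBundles-cor-homotopicII}, whereas you appeal to the latter directly from the isomorphism of the two trivial subbundles.
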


\begin{proof}
Let $\varphi:\mathcal{E}\rightarrow X\times E$ be a global trivialisation of $\mathcal{E}$ and let $F\subset E$ be a subspace having the same finite dimension than $\mathcal{F}$. Then the counterimage of $F$ under $\varphi$ defines a trivial subbundle $\mathcal{F}'$ of $\mathcal{E}$. Moreover, by taking any complementary subspace of $F$ in $E$ we obtain in the same way a trivial subbundle of $\mathcal{E}$ which is complementary to $\mathcal{F}'$. Now $\mathcal{F}$ and $\mathcal{F}'$ are homotopic by corollary \ref{BanachBundles-cor-homotopicI} and so any subbundle complementary to $\mathcal{F}$ is trivial by corollary \ref{BanachBundles-cor-homotopicII}. 
\end{proof}


\section{The construction of the index bundle}

If $\mathcal{E}$ and $\mathcal{F}$ are Banach bundles over $X$, we denote by $\mathfrak{F}_k(\mathcal{E},\mathcal{F})$, $k\in\mathbb{Z}$, the space of all morphisms between $\mathcal{E}$ and $\mathcal{F}$ that are bounded Fredholm operators of index $k$ in each fibre and by $\mathfrak{F}(\mathcal{E},\mathcal{F})$ the union of all these spaces. As in the foregoing section we will use the convention to denote the model spaces of $\mathcal{E}$ and $\mathcal{F}$ by $E$ and $F$, respectively.\\
Now we begin the construction of the index bundle and require in the following that the base space $X$ of the Banach bundles $\mathcal{E}$ and $\mathcal{F}$ is compact. Let $L\in\mathfrak{F}(\mathcal{E},\mathcal{F})$ be a Fredholm morphism and assume that $\mathcal{V}\subset\mathcal{F}$ is a finite dimensional subbundle that is transversal to $\im(L)$ in the sense that

\begin{align}\label{FredMor-align-transversal}
\im(L_\lambda)+\mathcal{V}_\lambda=\mathcal{F}_\lambda\quad\text{for all}\;\lambda\in X.
\end{align}
We choose a fibrewise projection $P\in\mathcal{L}(\mathcal{F})$ such that $\im(P)=\mathcal{V}$ and obtain a direct subbundle $\im(I-P)$ of $\mathcal{F}$. By the property \eqref{FredMor-align-transversal} of $\mathcal{V}$, the composition

\begin{align*}
\mathcal{E}\xrightarrow{L}\mathcal{F}\xrightarrow{I-P}\im(I-P)
\end{align*}
is a surjective bundle morphism and hence its kernel gives rise to a direct subbundle $E(L,\mathcal{V})$ of $\mathcal{E}$. Note that the total space of $E(L,\mathcal{V})$ is given by

\begin{align*}
\bigcup_{\lambda\in X}{\{u\in\mathcal{E}_\lambda:L_\lambda u\in\mathcal{V}_\lambda\}}
\end{align*}
and its fibres are of dimension

\begin{align}\label{FredMor-lemma-dimE}
\dim E(L,\mathcal{V})_\lambda=\ind(L_\lambda)+\dim\mathcal{V}_\lambda,\quad\lambda\in X.
\end{align}
We want to define the index bundle of the morphism $L$ by

\begin{align}\label{FredMor-align-indbundle}
[E(L,\mathcal{V})]-[\mathcal{V}]\in K(X)
\end{align}
but we have to make sure before that finite dimensional bundles $\mathcal{V}$ as in \eqref{FredMor-align-transversal} indeed exist and that the element \eqref{FredMor-align-indbundle} is independent of their choice.

\begin{theorem}\label{FredMor-theorem-transversalexistence}
Let $L\in\mathfrak{F}(\mathcal{E},\mathcal{F})$ be a Fredholm morphism acting between the Banach bundles $\mathcal{E}$ and $\mathcal{F}$ over the compact base space $X$. Then there exists a finite dimensional trivial subbundle $\mathcal{V}\subset\mathcal{F}$ such that \eqref{FredMor-align-transversal} holds over $X$.
\end{theorem}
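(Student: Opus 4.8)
The plan is to reduce the global statement to a local one and then patch together using the results of Section 2. First I would work locally: fix $\lambda_0 \in X$. Since $L_{\lambda_0}$ is Fredholm, its image $\im(L_{\lambda_0})$ has finite codimension in $\mathcal{F}_{\lambda_0}$, so I can pick finitely many vectors $v_1, \dots, v_r \in \mathcal{F}_{\lambda_0}$ spanning a complement of $\im(L_{\lambda_0})$. Using a local trivialisation of $\mathcal{F}$ near $\lambda_0$, these vectors extend to local sections $\sigma_1, \dots, \sigma_r$ of $\mathcal{F}$ over some neighbourhood $U_{\lambda_0}$. The condition $\im(L_\lambda) + \spa\{\sigma_1(\lambda), \dots, \sigma_r(\lambda)\} = \mathcal{F}_\lambda$ is an open condition on $\lambda$ (it is equivalent to the surjectivity of a continuous family of Fredholm-type maps $\mathcal{E}_\lambda \oplus \mathbb{K}^r \to \mathcal{F}_\lambda$, and surjectivity of Fredholm morphisms is open), so shrinking $U_{\lambda_0}$ we may assume transversality holds on all of $U_{\lambda_0}$, and moreover that $\sigma_1, \dots, \sigma_r$ are pointwise linearly independent there. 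Thus $\mathcal{F}$ admits, locally near each point, a finite-dimensional \emph{trivial} subbundle transversal to $\im(L)$.

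Next I would globalise by a compactness and induction argument. Cover $X$ by finitely many open sets $U_1, \dots, U_N$ on each of which there is a trivial finite dimensional subbundle $\mathcal{W}_j \subset \mathcal{F}|_{U_j}$ transversal to $\im(L)$ over $U_j$. Take a shrinking $\{D_j\}$ with $D_j \subset U_j$ closed and $\bigcup D_j = X$. The idea is to build up, for $k = 1, \dots, N$, a finite-dimensional trivial subbundle $\mathcal{V}^{(k)}$ of $\mathcal{F}$ over $X$ such that $\im(L_\lambda) + \mathcal{V}^{(k)}_\lambda = \mathcal{F}_\lambda$ for all $\lambda \in D_1 \cup \dots \cup D_k$. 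For the inductive step, $\mathcal{W}_{k+1}$ is a finite dimensional trivial subbundle of $\mathcal{F}|_{U_{k+1}}$, and by part (v) of Corollary \ref{BanachBundles-cor-nowhere} it can be replaced by a trivial finite dimensional subbundle of the whole bundle $\mathcal{F}$ that coincides with $\mathcal{W}_{k+1}$ over the closed set $D_{k+1}$ (after first shrinking $U_{k+1}$ slightly so $D_{k+1}$ is contained in an open set with closure inside $U_{k+1}$). Call this global extension $\widetilde{\mathcal{W}}_{k+1}$. Then I would like to take $\mathcal{V}^{(k+1)}$ to be a trivial finite dimensional subbundle containing both $\mathcal{V}^{(k)}$ and $\widetilde{\mathcal{W}}_{k+1}$; over $D_1 \cup \dots \cup D_k$ transversality is inherited from $\mathcal{V}^{(k)}$, and over $D_{k+1}$ from $\widetilde{\mathcal{W}}_{k+1}$, so $\mathcal{V}^{(k+1)}$ works on the enlarged closed set, and $\mathcal{V}^{(N)}$ is the desired bundle.

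The main obstacle is exactly this last point: producing a single \emph{trivial} finite dimensional subbundle of $\mathcal{F}$ that contains two given trivial finite dimensional subbundles. The naive guess $\mathcal{V}^{(k)} + \widetilde{\mathcal{W}}_{k+1}$ need not have locally constant dimension, so it is not a subbundle. The fix is to invoke part (iv) of Corollary \ref{BanachBundles-cor-nowhere}: given any finite dimensional subbundle of $\mathcal{F}$ there is a complementary finite dimensional subbundle making the sum trivial. Apply this to $\mathcal{V}^{(k)} \oplus \widetilde{\mathcal{W}}_{k+1}$ — or rather, first note we may assume $\mathcal{V}^{(k)}$ and $\widetilde{\mathcal{W}}_{k+1}$ are subbundles of a common trivial finite dimensional subbundle $\mathcal{T}$ of $\mathcal{F}$ (embed their direct sum via (iv) and observe it sits inside something trivial), and then take $\mathcal{V}^{(k+1)} := \mathcal{T}$. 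Since $\mathcal{T}$ contains both, it is transversal to $\im(L)$ wherever either of them is, which is precisely $D_1 \cup \dots \cup D_{k+1}$. One should double-check that enlarging the transversal subbundle preserves transversality — this is immediate since $\im(L_\lambda) + \mathcal{V}^{(k)}_\lambda = \mathcal{F}_\lambda$ forces $\im(L_\lambda) + \mathcal{T}_\lambda = \mathcal{F}_\lambda$ as $\mathcal{V}^{(k)}_\lambda \subseteq \mathcal{T}_\lambda$ — and that the whole construction only ever uses finitely many steps, so the output is genuinely a finite dimensional trivial bundle over $X$.
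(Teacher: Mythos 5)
Your local step is essentially the paper's: at each $\lambda_0$ you produce a finite dimensional space of sections spanning a complement of $\im(L_{\lambda_0})$, and openness of the transversality condition (phrased via surjectivity of the Fredholm map $\mathcal{E}_\lambda\oplus\mathbb{K}^r\to\mathcal{F}_\lambda$; the paper uses invertibility of the map $A_\lambda$ on a complement of $\ker L_{\lambda_0}$, which is equivalent) gives transversality on a neighbourhood. That part is fine.

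The gap is in the patching step, and it is exactly the one you flag yourself. You need, given two finite dimensional trivial subbundles $\mathcal{V}^{(k)}$ and $\widetilde{\mathcal{W}}_{k+1}$ of $\mathcal{F}$, a single finite dimensional trivial subbundle $\mathcal{T}\subset\mathcal{F}$ containing both. Your justification, ``embed their direct sum via (iv) and observe it sits inside something trivial'', does not deliver this: Corollary \ref{BanachBundles-cor-nowhere}(iii)/(iv) lets you embed a bundle \emph{abstractly isomorphic} to $\mathcal{V}^{(k)}\oplus\widetilde{\mathcal{W}}_{k+1}$ as a subbundle of $\mathcal{F}$, and lets you complement a \emph{single} given subbundle inside a trivial one, but neither produces a subbundle of $\mathcal{F}$ that contains the two \emph{given} subbundles as subsets. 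The obstruction is precisely that the fibrewise sum $\mathcal{V}^{(k)}_\lambda+\widetilde{\mathcal{W}}_{k+1,\lambda}$ may fail to have constant dimension, and enlarging it to a genuine subbundle of constant dimension is a nontrivial problem not addressed by the cited corollaries. So the inductive step is not established.

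The paper sidesteps this issue entirely by never having to merge two independently constructed subbundles. After finding $\mathcal{V}_1$ it simultaneously produces a global complement $\mathcal{W}_1$ with $\mathcal{F}=\mathcal{V}_1\oplus\mathcal{W}_1$ (using Corollary \ref{BanachBundles-cor-bundlesplitting}, which is stronger than the mere extension you get from \ref{BanachBundles-cor-nowhere}(v)), and then runs the local argument again for the composed morphism $P\circ L:\mathcal{E}\to\mathcal{W}_1$. Every subsequent piece $\mathcal{V}_{i+1}$ is taken inside the complement $\mathcal{W}_i$ of $\mathcal{V}_1\oplus\dots\oplus\mathcal{V}_i$, so the pieces are disjoint by construction and $\bigoplus_i\mathcal{V}_i$ is automatically a genuine direct sum inside $\mathcal{F}$; no merging lemma is needed. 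A subtle but essential ingredient, absent from your argument, is Corollary \ref{BanachBundles-cor-comptriv}: the complement $\mathcal{W}_i$ of a trivial finite dimensional subbundle stays \emph{trivial} over each trivialising chart $\overline{U^1_k}$ of $\mathcal{F}$, which is exactly what licenses reapplying the ``$\mathcal{F}$ trivial'' special case to $P\circ L$ with target $\mathcal{W}_i$. If you wish to salvage your approach, you would need to replace the merging step by a construction that makes $\widetilde{\mathcal{W}}_{k+1}$ fibrewise disjoint from $\mathcal{V}^{(k)}$ before summing --- at which point you are essentially redoing the paper's argument.
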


\begin{proof}
We assume in a first step that $\mathcal{F}$ is trivialised by a global trivialisation $\psi$.\\
Let $\lambda_0\in X$ and let $U_{\lambda_0}\subset X$ be an open neighbourhood of $\lambda_0$ such that $\mathcal{E}$ is trivial on $U_{\lambda_0}$ by means of a trivialisation $\varphi$. Let

\begin{align*}
\tilde{L}=\pr_2\circ\psi\circ L\circ\varphi^{-1}:U_{\lambda_0}\times E\rightarrow F
\end{align*}
denote the corresponding family of bounded Fredholm operators with respect to these trivialisations. Since $\tilde{L}_{\lambda_0}$ is Fredholm, there exists $V_{\lambda_0}\subset F$, $\dim V_{\lambda_0}<\infty$, and $W_{\lambda_0}\subset E$ closed such that $\im(\tilde L_{\lambda_0})\oplus V_{\lambda_0}=F$ and $\ker(\tilde L_{\lambda_0})\oplus W_{\lambda_0}=E$. Now consider

\begin{align*}
A_{\lambda}:W_{\lambda_0}\times V_{\lambda_0}\rightarrow F,\;\; A_{\lambda}(w,v)=\tilde L_\lambda w+v.
\end{align*}
Because of $A_{\lambda_0}\in GL(W_{\lambda_0}\times V_{\lambda_0},F)$ and the continuity of $A:U_{\lambda_0}\rightarrow \mathcal{L}(W_{\lambda_0}\times V_{\lambda_0},F)$, there exists a neighbourhood $\tilde U_{\lambda_0}\subset U_{\lambda_0}$ of $\lambda_0$ such that $A_\lambda\in GL(W_{\lambda_0}\times V_{\lambda_0},F)$ and hence

\begin{align*}
\im(\tilde L_\lambda)+V_{\lambda_0}=F\;\;\text{ for all }\; \lambda\in \tilde U_{\lambda_0}.
\end{align*}
By compactness we can now cover $X$ by a finite number of neighbourhoods $\tilde{U}_{\lambda_i}$, $i=1,\ldots,n$, such that for each $i$ there exists a finite dimensional subspace $V_{\lambda_i}$ such that

\begin{align*}
\im(\pr_2\circ\psi_\lambda\circ L_\lambda)+V_{\lambda_i}=F\;\;\text{for all}\,\, \lambda\in\tilde{U}_{\lambda_i},\quad i=1,\ldots,n.
\end{align*} 
Finally, $V:=V_1+\ldots+V_n$ defines a finite dimensional subspace of $F$ such that 

\begin{align*}
\im(\pr_2\circ\psi_\lambda\circ L_\lambda)+V=F\;\;\text{for all}\,\, \lambda\in X.
\end{align*} 
Then $\psi^{-1}(X\times V)$ is a finite dimensional trivial subbundle of $\mathcal{F}$ such that \eqref{FredMor-align-transversal} holds on all of $X$ and the assertion is proved in the special case that $\mathcal{F}$ is trivial.\\
We now turn to the general case. Let $U^0_k$, $k=1,\ldots,N$, be a finite open covering of $X$ such that $\mathcal{F}$ is trivial over each $U^0_k$. Moreover, we choose open sets $U^i_k$, $i=1,2$, $k=1,\ldots, N$, such that $\overline{U^{i+1}_k}\subset U^i_k$, $i=0,1$, and $\{U^2_k\}$ is still an open covering of $X$.\\
Consider $U^0_1$. Since $\mathcal{F}$ is trivial on the compact subspace $\overline{U^1_1}$, we obtain from the special case in which we proved the assertion already that there is a finite dimensional trivial subbundle $\mathcal{V}'$ of $\mathcal{F}$ over $\overline{U^1_1}$ such that \eqref{FredMor-align-transversal} holds. We choose a complement $\mathcal{W}'$ to $\mathcal{V}'$ in $\mathcal{F}\mid_{\overline{U^1_1}}$. By corollary \ref{BanachBundles-cor-bundlesplitting} there exist bundles $\mathcal{V}_1$ and $\mathcal{W}_1$ over $X$ such that $\mathcal{F}=\mathcal{V}_1\oplus\mathcal{W}_1$, $\mathcal{V}_1$ is finite dimensional, trivial and

\begin{align*}
\mathcal{V}_1\mid_{\overline{U^2_1}}\,=\mathcal{V}'\mid_{\overline{U^2_1}},\qquad  \mathcal{W}_1\mid_{\overline{U^2_1}}\,=\mathcal{W}'\mid_{\overline{U^2_1}}.
\end{align*}
Note that $\mathcal{V}_1$ is a trivial bundle over all of $X$ satisfying \eqref{FredMor-align-transversal} on $U^2_1$. Moreover, since $\mathcal{F}$ is trivial on $\overline{U^1_k}$, $k=1,\ldots N$, we obtain by corollary \ref{BanachBundles-cor-comptriv} that the bundle $\mathcal{W}_1$ is trivial on all $\overline{U^1_k}$ as well.\\ 
Next we consider $U^0_2$ and let $P\in\mathcal{L}(\mathcal{F})$ denote a projection onto $\mathcal{W}_1$. Consider $P\circ L:\mathcal{E}\rightarrow\mathcal{W}_1$ which is again a Fredholm morphism. Since $\mathcal{W}_1$ is trivial on $\overline{U^1_2}$ we can argue as on $U^0_1$ above and obtain a decomposition $\mathcal{W}_1=\mathcal{V}_2\oplus\mathcal{W}_2$ such that \eqref{FredMor-align-transversal} holds for $P\circ L$ and $\mathcal{V}_2$ on $U^2_2$. Moreover, $\mathcal{V}_2$ is trivial on $X$ and $\mathcal{W}_2$ is trivial on each $\overline{U^1_k}$, $k=1,\ldots,N$. Finally, note that $\mathcal{V}_1\oplus\mathcal{V}_2$ is a subbundle of $\mathcal{F}$ which is transversal to $\im L$ over $U^2_1\cup U^2_2$.\\
Continuing this process we eventually arrive at a finite dimensional trivial subbundle $\mathcal{V}=\bigoplus^N_{i=1}{\mathcal{V}_i}$ of $\mathcal{F}$ over $X$ such that \eqref{FredMor-align-transversal} holds over all of $X=\bigcup^N_{k=1}{U^2_k}$.
\end{proof}

We leave it to the reader to check that for compact spaces $Y$ and continuous maps $f:Y\rightarrow X$, the pullback bundle $f^\ast\mathcal{V}$ is transversal to the image of the pullback morphism $f^\ast L:f^\ast\mathcal{E}\rightarrow f^\ast\mathcal{F}$ and we have

\begin{align}\label{BanachBundles-cor-prenaturality}
[E(f^\ast L,f^\ast\mathcal{V})]-[f^\ast\mathcal{V}]=f^\ast([E(L,\mathcal{V})]-[\mathcal{V}])\in K(X).
\end{align}
Now we can prove that the element \eqref{FredMor-align-indbundle} moreover does not depend on the particular choice of a finite dimensional subbundle of $\mathcal{F}$ which is transversal to $\im L$.

\begin{theorem}\label{FredMor-theorem-welldefined}
Let $L\in\mathfrak{F}(\mathcal{E},\mathcal{F})$ be a Fredholm morphism acting between the Banach bundles $\mathcal{E}$ and $\mathcal{F}$ over the compact base space $X$. If $\mathcal{V},\mathcal{W}\subset\mathcal{F}$ are two finite dimensional subbundles that are transversal to the image of $L$ in the sense of \eqref{FredMor-align-transversal}, then

\begin{align*}
[E(L,\mathcal{V})]-[\mathcal{V}]=[E(L,\mathcal{W})]-[\mathcal{W}]\in K(X).
\end{align*}

\end{theorem}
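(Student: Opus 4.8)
The plan is to reduce the comparison of two transversal subbundles $\mathcal{V}$ and $\mathcal{W}$ to the case where one contains the other. First I would observe that $\mathcal{V}+\mathcal{W}$, taken fibrewise, is again a finite dimensional subbundle of $\mathcal{F}$ transversal to $\im(L)$: transversality is immediate since it already holds for $\mathcal{V}$, and that the fibrewise sums form a subbundle follows because locally, after trivialising, $\mathcal{V}$ and $\mathcal{W}$ are images of continuous families of projections, so one can write $\mathcal{V}+\mathcal{W}$ as the image of a continuous idempotent and apply Proposition \ref{BanachBundles-prop-projections}; alternatively one splits off a complement and invokes Lemma \ref{BanachBundles-cor-kerbundle}. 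Granting this, it suffices to prove the identity whenever $\mathcal{V}\subset\mathcal{W}$, and then apply it twice, to the pairs $(\mathcal{V},\mathcal{V}+\mathcal{W})$ and $(\mathcal{W},\mathcal{V}+\mathcal{W})$.

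So assume $\mathcal{V}\subset\mathcal{W}$. The key point is to understand how $E(L,\mathcal{W})$ relates to $E(L,\mathcal{V})$. Write $\mathcal{W}=\mathcal{V}\oplus\mathcal{U}$ for a finite dimensional subbundle $\mathcal{U}$ of $\mathcal{F}$; such a complement exists by Corollary \ref{BanachBundles-cor-splitting}. I claim there is a short exact sequence of vector bundles
\begin{align*}
0\rightarrow E(L,\mathcal{V})\rightarrow E(L,\mathcal{W})\xrightarrow{\;\overline{L}\;}\mathcal{U}\rightarrow 0,
\end{align*}
where $\overline{L}$ is the composition of $L$ restricted to $E(L,\mathcal{W})$ (whose image lies in $\mathcal{W}$) with the projection $\mathcal{W}\to\mathcal{U}$ along $\mathcal{V}$. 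Fibrewise this is exactly the elementary linear-algebra statement that $\{u:L_\lambda u\in\mathcal{W}_\lambda\}/\{u:L_\lambda u\in\mathcal{V}_\lambda\}\cong \mathcal{U}_\lambda$, using that $L_\lambda$ is surjective onto $\mathcal{F}_\lambda$ modulo any transversal complement; surjectivity of $\overline{L}$ in each fibre is precisely the transversality \eqref{FredMor-align-transversal} of $\mathcal{V}$. Since $X$ is compact, this sequence splits, so in $K(X)$ we get $[E(L,\mathcal{W})]=[E(L,\mathcal{V})]+[\mathcal{U}]$, while trivially $[\mathcal{W}]=[\mathcal{V}]+[\mathcal{U}]$. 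Subtracting yields $[E(L,\mathcal{W})]-[\mathcal{W}]=[E(L,\mathcal{V})]-[\mathcal{V}]$, which is the desired equality in the nested case.

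I expect the main obstacle to be the verification that $\mathcal{V}+\mathcal{W}$ is genuinely a subbundle — i.e. that the fibrewise sums have locally constant dimension and local triviality — since $\mathcal{V}\cap\mathcal{W}$ need not be a subbundle and the sum could jump in dimension a priori. The cleanest route is: pick a projection $P$ with image $\mathcal{V}$, then $\mathcal{V}+\mathcal{W}=\mathcal{V}\oplus (I-P)\mathcal{W}$, and $(I-P)\mathcal{W}$ is a subbundle because it is the image of the bundle morphism $(I-P)|_{\mathcal{W}}:\mathcal{W}\to\mathcal{F}$, whose kernel $\mathcal{W}\cap\mathcal{V}$ is automatically a subbundle here once one checks its dimension is locally constant — but this is circular, so instead I would note $(I-P)|_{\mathcal{W}}$ has image in the subbundle $\im(I-P)$ and, crucially, using transversality of $\mathcal{V}$ one shows $\mathcal{W}\cap\mathcal{V}$ equals $\ker(\,\text{the map }\mathcal{W}\to\mathcal{F}/\mathcal{V}\,)$ which in turn is controlled because $\mathcal{W}\to\mathcal{F}/\mathcal{V}\cong\im(I-P)$ factors through... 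In practice, the simplest fix is to avoid $\mathcal{V}+\mathcal{W}$ entirely: enlarge using Corollary \ref{BanachBundles-cor-nowhere}(iii)–(iv) to embed a common \emph{trivial} bundle $\mathcal{T}\supset\mathcal{V}\oplus\mathcal{W}'$ for suitable complements, reducing to the nested case against a fixed trivial transversal bundle; the naturality relation \eqref{BanachBundles-cor-prenaturality} and the homotopy results of Section \ref{BanachBundles} (Proposition \ref{BanachBundles-prop-homotopic}, Corollary \ref{BanachBundles-cor-homotopicII}) then handle the bookkeeping. I would present the nested-case argument in full and state the reduction carefully, flagging the subbundle claim as the delicate point.
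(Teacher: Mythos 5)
Your nested case (the short exact sequence
\begin{align*}
0\rightarrow E(L,\mathcal{V})\rightarrow E(L,\mathcal{W})\xrightarrow{\overline{L}}\mathcal{U}\rightarrow 0
\end{align*}
for $\mathcal{W}=\mathcal{V}\oplus\mathcal{U}$) is correct and is essentially the same as the paper's Step~1; the paper phrases it in terms of a complement $E(L,\mathcal{V})^\perp$ of $E(L,\mathcal{V})$ in $E(L,\mathcal{W})$ and the isomorphism $L|_{E(L,\mathcal{V})^\perp}$ onto a complement of $\mathcal{V}$ in $\mathcal{W}$, but it is the same argument.

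The genuine gap is the reduction to the nested case. Your primary route via $\mathcal{V}+\mathcal{W}$ does not work, and you are right to be suspicious of it: $\mathcal{V}+\mathcal{W}$ need not be a subbundle because $\dim(\mathcal{V}_\lambda+\mathcal{W}_\lambda)$ can jump even when both $\mathcal{V}$ and $\mathcal{W}$ are transversal to $\im L$. Concretely, take $L_\lambda$ the unilateral shift on $\ell^2$ for every $\lambda\in[0,1]$, so $\im L_\lambda$ has codimension one; let $\mathcal{V}_\lambda=\spa\{e_1\}$ and $\mathcal{W}_\lambda=\spa\{e_1+\lambda e_2\}$. Both are transversal line subbundles, but $\dim(\mathcal{V}_\lambda+\mathcal{W}_\lambda)$ jumps from $1$ at $\lambda=0$ to $2$ for $\lambda>0$. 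Transversality gives no control on $\mathcal{V}\cap\mathcal{W}$, so there is no repair of this step within the fibrewise-sum framework; every variant you sketch (projecting $\mathcal{W}$ off $\mathcal{V}$, analysing $\mathcal{W}\to\mathcal{F}/\mathcal{V}$) runs into the same jumping-dimension issue, as you yourself note.

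Your closing paragraph gestures toward the right repair but leaves it unspecified, and the actual work is more than bookkeeping. The paper's reduction proceeds as follows: first enlarge $\mathcal{V}$ and $\mathcal{W}$ \emph{separately} to trivial finite dimensional transversal subbundles and, by further padding inside a complement of the smaller one, arrange that they are trivial of the \emph{same} dimension (Step~2, using Corollary~\ref{BanachBundles-cor-nowhere} and Step~1 repeatedly). Two trivial subbundles of the same dimension in an infinite dimensional Banach bundle are homotopic (Corollary~\ref{BanachBundles-cor-homotopicI}), so there is a subbundle $\mathcal{M}\subset\pi^\ast\mathcal{F}$ over $X\times I$ restricting to them at the endpoints. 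One then must enlarge $\mathcal{M}$ to a finite dimensional subbundle transversal to $\im(\pi^\ast L)$ over all of $X\times I$ — this requires invoking Theorem~\ref{FredMor-theorem-transversalexistence} again for $P\pi^\ast L$ with $P$ a projection onto a complement of $\mathcal{M}$ (Step~3). Finally, $\iota_0^\ast=\iota_1^\ast$ on $K(X\times I)$ together with the naturality relation \eqref{BanachBundles-cor-prenaturality} and Step~1 closes the argument (Step~4). So: keep your nested-case argument, discard the $\mathcal{V}+\mathcal{W}$ reduction, and replace it with the trivialise--equalise-dimension--homotope--extend-transversality chain.
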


\begin{proof}
There are four steps.

\subsubsection*{Step 1}
We consider the case that $\mathcal{V}\subset\mathcal{W}$ is a subbundle. Then $E(L,\mathcal{V})\subset E(L,\mathcal{W})$ is a subbundle as well and hence we can find a complement bundle $E(L,\mathcal{V})^\perp$ such that $E(L,\mathcal{W})=E(L,\mathcal{V})\oplus E(L,\mathcal{V})^\perp$. Since $L\mid_{E(L,\mathcal{V})^\perp}$ is injective by the definition of $E(L,\mathcal{V})$, we conclude that $L(E(L,\mathcal{V})^\perp)\subset\mathcal{W}$ is a subbundle and the restriction of $L$ defines a bundle isomorphism $L\mid_{E(L,\mathcal{V})^\perp}:E(L,\mathcal{V})^\perp\rightarrow L(E(L,\mathcal{V})^\perp)$. Moreover, from the definition of $E(L,\mathcal{V})$ and \eqref{FredMor-lemma-dimE} it is readily seen that $\mathcal{V}\oplus L(E(L,\mathcal{V})^\perp)=\mathcal{W}$ and hence we obtain

\begin{align*}
[E(L,\mathcal{W})]-[\mathcal{W}]&=[E(L,\mathcal{V})\oplus E(L,\mathcal{V})^\perp]-[\mathcal{V}\oplus L(E(L,\mathcal{V})^\perp)]\\
&=[E(L,\mathcal{V})]-[\mathcal{V}].
\end{align*}

\subsubsection*{Step 2}
In the second step of the proof we now turn to the general case and consider two finite dimensional bundles $\mathcal{V}$, $\mathcal{W}$ as in the assertion. Our aim is to use the special case we already proved above in order to show that we can assume without loss of generality that $\mathcal{V}$ and $\mathcal{W}$ are trivial and of the same dimension. In order to do so, note at first that $\mathcal{V}$ and $\mathcal{W}$ are contained in  trivial finite dimensional subbundles of $\mathcal{F}$ by corollary \ref{BanachBundles-cor-nowhere}. Hence by the special case proved above, we can assume without loss of generality that $\mathcal{V}$ and $\mathcal{W}$ are trivial. If now $\dim\mathcal{V}=\dim\mathcal{W}$, we are done. If, however, they are not of the same dimension, say $\dim\mathcal{V}<\dim\mathcal{W}$, we choose a subbundle $\mathcal{M}$ of $\mathcal{E}$ which is complementary to $\mathcal{V}$ in $\mathcal{E}$. According to corollary \ref{BanachBundles-cor-nowhere}, we now can find a $(\dim\mathcal{W}-\dim\mathcal{V})$-dimensional trivial subbundle of $\mathcal{M}$ and the direct sum of this bundle and $\mathcal{V}$ yields a trivial subbundle of $\mathcal{E}$ of the same dimension than $\mathcal{W}$. By using the first step of our proof once again, we finally obtain that it suffices to prove the assertion of the theorem under the additional assumption that $\mathcal{V}$ and $\mathcal{W}$ are trivial and of the same dimension.

\subsubsection*{Step 3}
By corollary \ref{BanachBundles-cor-homotopicI}, $\mathcal{V}$ and $\mathcal{W}$ are homotopic and hence there exists a finite dimensional subbundle $\mathcal{M}$ of $\pi^\ast\mathcal{F}$ such that $\iota^\ast_0\mathcal{M}=\mathcal{V}$ and $\iota^\ast_1\mathcal{M}=\mathcal{W}$. Moreover, $\mathcal{M}$ is trivial since its restriction to $X\times\{0\}$ is trivial. Consider the bundle morphism $\pi^\ast L:\pi^\ast\mathcal{E}\rightarrow\pi^\ast\mathcal{F}$. Our next aim is to extend $\mathcal{M}$ to a larger bundle over $X\times I$ which is transversal to $\im(\pi^\ast L)$ over $X\times I$.\\
Consider $P\pi^\ast L:\pi^\ast\mathcal{E}\rightarrow\mathcal{M}'$, where $P:\pi^\ast\mathcal{F}\rightarrow\mathcal{M}'$ denotes a projection onto a complement $\mathcal{M}'$ of $\mathcal{M}$ in $\pi^\ast\mathcal{F}$. $P\pi^\ast L$ is a Fredholm morphism and by theorem \ref{FredMor-theorem-transversalexistence}, we can find a finite dimensional subbundle $\mathcal{M}''$ of $\mathcal{M}'$ which is transversal to $\im(P\pi^\ast L:\pi^\ast\mathcal{E}\rightarrow\mathcal{M}')$ over $X\times I$. Taking the direct sum of $\mathcal{M}''$ and $\mathcal{M}$ we finally obtain a finite dimensional subbundle of $\pi^\ast\mathcal{F}$ that is transversal to $\im(\pi^\ast L:\pi^\ast\mathcal{E}\rightarrow\pi^\ast\mathcal{F})$ and contains $\mathcal{M}$ as a subbundle.

\subsubsection*{Step 4}
Using that $\iota^\ast_0=\iota^\ast_1:K(X\times I)\rightarrow K(X)$, \eqref{BanachBundles-cor-prenaturality} and once again the first step of our proof, we now finally obtain

\begin{align*}
[E(L,\mathcal{V})]-[\mathcal{V}]&=[E(L,\iota^\ast_0(\mathcal{M}\oplus\mathcal{M}''))]-[\iota^\ast_0(\mathcal{M}\oplus\mathcal{M}'')]\\
&=[E(\iota^\ast_0\pi^\ast L,\iota^\ast_0(\mathcal{M}\oplus\mathcal{M}''))]-[\iota^\ast_0(\mathcal{M}\oplus\mathcal{M}'')]\\
&=\iota^\ast_0([E(\pi^\ast L,\mathcal{M}\oplus\mathcal{M}'')]-[\mathcal{M}\oplus\mathcal{M}''])\\
&=\iota^\ast_1([E(\pi^\ast L,\mathcal{M}\oplus\mathcal{M}'')]-[\mathcal{M}\oplus\mathcal{M}''])\\
&=[E(\iota^\ast_1\pi^\ast L,\iota^\ast_1(\mathcal{M}\oplus\mathcal{M}''))]-[\iota^\ast_1(\mathcal{M}\oplus\mathcal{M}'')]\\
&=[E(L,\iota^\ast_1(\mathcal{M}\oplus\mathcal{M}''))]-[\iota^\ast_1(\mathcal{M}\oplus\mathcal{M}'')]\\
&=[E(L,\mathcal{W})]-[\mathcal{W}].
\end{align*}
\end{proof}

Because of the theorems \ref{FredMor-theorem-transversalexistence} and \ref{FredMor-theorem-welldefined} we now can finally define the index bundle as follows:

\begin{defi}\label{FredMor-defi-indbund}
Let $L\in\mathfrak{F}(\mathcal{E},\mathcal{F})$ be a Fredholm morphism acting between the Banach bundles $\mathcal{E}$ and $\mathcal{F}$ over the compact base $X$. We call the element

\begin{align*}
\ind(L)=[E(L,\mathcal{V})]-[\mathcal{V}]\in K(X)
\end{align*}
the index bundle of $L$, where $\mathcal{V}\subset\mathcal{F}$ is any finite dimensional subbundle such that \eqref{FredMor-align-transversal} holds.
\end{defi}


\section{Main properties}

In this section we discuss the main properties of the index bundle for Fredholm morphisms between Banach bundles. Since most of them are immediate consequences of the definition, we leave almost all proofs to the reader.\\
If not otherwise stated, we assume throughout that $X$ is a compact topological space, $\mathcal{E},\mathcal{F}$ are Banach bundles over $X$ and $L\in\mathfrak{F}(\mathcal{E},\mathcal{F})$ is a Fredholm morphism.

\begin{lemma}[Normalisation]\label{FredMor-lemma-normalization}
Let $L\in\mathfrak{F}_0(\mathcal{E},\mathcal{F})$ be a bundle isomorphism. Then 

\begin{align*}
\ind(L)=0\in K(X).
\end{align*}
\end{lemma}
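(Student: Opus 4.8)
The plan is to apply the definition of the index bundle directly with a cleverly chosen transversal subbundle. Since $L$ is a bundle isomorphism, in particular each $L_\lambda$ is Fredholm of index $0$, so $L\in\mathfrak{F}_0(\mathcal{E},\mathcal{F})$ as assumed, and $\im(L_\lambda)=\mathcal{F}_\lambda$ for every $\lambda$. This means the zero subbundle $\mathcal{V}=\Theta(0)\subset\mathcal{F}$ (the bundle with trivial fibre $\{0\}$) already satisfies the transversality condition \eqref{FredMor-align-transversal}, since $\im(L_\lambda)+\{0\}=\mathcal{F}_\lambda$.

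Next I would identify $E(L,\mathcal{V})$ for this choice. By definition its total space is $\bigcup_{\lambda\in X}\{u\in\mathcal{E}_\lambda:L_\lambda u\in\mathcal{V}_\lambda\}=\bigcup_{\lambda\in X}\{u\in\mathcal{E}_\lambda:L_\lambda u=0\}$, which is the kernel of $L$. As $L$ is a bundle isomorphism, each $L_\lambda$ is injective, so this is the zero subbundle $\Theta(0)$ of $\mathcal{E}$. Consistency with the dimension formula \eqref{FredMor-lemma-dimE} is immediate: $\dim E(L,\mathcal{V})_\lambda=\ind(L_\lambda)+\dim\mathcal{V}_\lambda=0+0=0$. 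Therefore, by Definition \ref{FredMor-defi-indbund},
\begin{align*}
\ind(L)=[E(L,\mathcal{V})]-[\mathcal{V}]=[\Theta(0)]-[\Theta(0)]=0\in K(X),
\end{align*}
where $[\Theta(0)]$ is the class of the zero bundle, which is the identity element of $K(X)$.

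I do not anticipate a serious obstacle here; the only point requiring a word of care is whether one is willing to use the zero-dimensional subbundle $\mathcal{V}=\Theta(0)$ as an admissible choice in Definition \ref{FredMor-defi-indbund}. If one prefers to avoid this degenerate case, an equivalent argument uses a genuinely nontrivial $\mathcal{V}$: by Corollary \ref{BanachBundles-cor-nowhere}(ii)--(iii) choose any finite dimensional subbundle $\mathcal{V}\subset\mathcal{F}$ (for instance a trivial one), which is automatically transversal to $\im(L)=\mathcal{F}$. Then $E(L,\mathcal{V})_\lambda=\{u\in\mathcal{E}_\lambda:L_\lambda u\in\mathcal{V}_\lambda\}=L_\lambda^{-1}(\mathcal{V}_\lambda)$, and since $L$ is an isomorphism of bundles, $L^{-1}$ restricts to a bundle isomorphism $\mathcal{V}\xrightarrow{\ \cong\ }E(L,\mathcal{V})$. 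Hence $[E(L,\mathcal{V})]=[\mathcal{V}]$ in $K(X)$ and $\ind(L)=[E(L,\mathcal{V})]-[\mathcal{V}]=0$. Either way the computation is a direct unwinding of the definition using injectivity and surjectivity of $L$.
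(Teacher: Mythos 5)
Your proof is correct, and since the paper explicitly leaves the proof of this lemma to the reader, the direct unwinding of Definition \ref{FredMor-defi-indbund} you give is exactly what is intended. Both of your variants work: with $\mathcal{V}=\Theta(0)$ one gets $E(L,\mathcal{V})=\ker L=\Theta(0)$ immediately, and with a nontrivial $\mathcal{V}$ the restriction of $L^{-1}$ provides the isomorphism $\mathcal{V}\cong E(L,\mathcal{V})$; either way $\ind(L)=0$.
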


\begin{lemma}[Naturality]\label{FredMor-lemma-naturality}
Let $f:Y\rightarrow X$ be continuous, where $Y$ is compact. Then

\begin{align*}
\ind(f^{\ast}L)=f^{\ast}\ind(L)\in K(Y).
\end{align*}
\end{lemma}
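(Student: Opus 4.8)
The plan is to reduce everything to the defining formula $\ind(L)=[E(L,\mathcal{V})]-[\mathcal{V}]$ together with the pullback compatibility formula \eqref{BanachBundles-cor-prenaturality}, which was already established (and left to the reader) just before Theorem \ref{FredMor-theorem-welldefined}. First I would fix a finite dimensional trivial subbundle $\mathcal{V}\subset\mathcal{F}$ which is transversal to $\im(L)$ in the sense of \eqref{FredMor-align-transversal}; such a $\mathcal{V}$ exists by Theorem \ref{FredMor-theorem-transversalexistence}. The key elementary observation is that transversality pulls back: for any $\lambda\in Y$ one has $(f^\ast\mathcal{F})_\lambda=\mathcal{F}_{f(\lambda)}$, $(f^\ast\mathcal{E})_\lambda=\mathcal{E}_{f(\lambda)}$, $(f^\ast\mathcal{V})_\lambda=\mathcal{V}_{f(\lambda)}$, and the fibre of $f^\ast L$ at $\lambda$ is just $L_{f(\lambda)}$; hence $\im((f^\ast L)_\lambda)+(f^\ast\mathcal{V})_\lambda=\im(L_{f(\lambda)})+\mathcal{V}_{f(\lambda)}=\mathcal{F}_{f(\lambda)}=(f^\ast\mathcal{F})_\lambda$. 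So $f^\ast\mathcal{V}$ is a finite dimensional (still trivial) subbundle of $f^\ast\mathcal{F}$ transversal to $\im(f^\ast L)$, and it is therefore an admissible choice for computing $\ind(f^\ast L)$ via Definition \ref{FredMor-defi-indbund}.

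Next I would identify the two relevant bundles over $Y$. On one hand, by Definition \ref{FredMor-defi-indbund} applied on $Y$ with the transversal subbundle $f^\ast\mathcal{V}$,
\begin{align*}
\ind(f^\ast L)=[E(f^\ast L,f^\ast\mathcal{V})]-[f^\ast\mathcal{V}]\in K(Y).
\end{align*}
On the other hand, applying $f^\ast$ to the definition of $\ind(L)$ on $X$ gives
\begin{align*}
f^\ast\ind(L)=f^\ast\big([E(L,\mathcal{V})]-[\mathcal{V}]\big)=[f^\ast E(L,\mathcal{V})]-[f^\ast\mathcal{V}]\in K(Y),
\end{align*}
using that $f^\ast:K(X)\to K(Y)$ is a ring homomorphism and in particular additive on differences of vector bundle classes. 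Comparing the two displays, the lemma is proved once we know that $E(f^\ast L,f^\ast\mathcal{V})\cong f^\ast E(L,\mathcal{V})$ as vector bundles over $Y$, equivalently that their classes in $K(Y)$ agree — and this is precisely the content of formula \eqref{BanachBundles-cor-prenaturality}, which says $[E(f^\ast L,f^\ast\mathcal{V})]-[f^\ast\mathcal{V}]=f^\ast([E(L,\mathcal{V})]-[\mathcal{V}])$. So the proof is a one-line invocation: $\ind(f^\ast L)=[E(f^\ast L,f^\ast\mathcal{V})]-[f^\ast\mathcal{V}]\overset{\eqref{BanachBundles-cor-prenaturality}}{=}f^\ast([E(L,\mathcal{V})]-[\mathcal{V}])=f^\ast\ind(L)$.

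Strictly speaking the only thing that needs a word of justification is the fibrewise identification $E(f^\ast L,f^\ast\mathcal{V})\cong f^\ast E(L,\mathcal{V})$ underlying \eqref{BanachBundles-cor-prenaturality}, which follows from the total-space description
\begin{align*}
E(L,\mathcal{V})=\bigcup_{\mu\in X}\{u\in\mathcal{E}_\mu:L_\mu u\in\mathcal{V}_\mu\}
\end{align*}
given after \eqref{FredMor-align-transversal}: the fibre of $f^\ast E(L,\mathcal{V})$ at $\lambda\in Y$ is $\{u\in\mathcal{E}_{f(\lambda)}:L_{f(\lambda)}u\in\mathcal{V}_{f(\lambda)}\}$, which is exactly the fibre of $E(f^\ast L,f^\ast\mathcal{V})$ at $\lambda$, and one checks that the canonical bundle map is a continuous isomorphism (e.g. using that a projection $P$ with $\im P=\mathcal{V}$ pulls back to a projection $f^\ast P$ with $\im(f^\ast P)=f^\ast\mathcal{V}$, so that $(I-f^\ast P)\circ f^\ast L=f^\ast((I-P)\circ L)$ and kernels correspond). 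Since \eqref{BanachBundles-cor-prenaturality} has already been stated in the text, I would simply cite it and keep the proof to the two or three lines above. There is no real obstacle here; the mild subtlety is purely bookkeeping — making sure the chosen $\mathcal{V}$ is transversal to $\im L$ over \emph{all} of $X$ (guaranteed by Theorem \ref{FredMor-theorem-transversalexistence}) so that $f^\ast\mathcal{V}$ is automatically transversal over all of $Y$ regardless of the image of $f$, and noting that $Y$ compact is needed so that $K(Y)$ and the pullback are defined in the setting of the paper.
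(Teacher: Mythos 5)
Your proof is correct and is exactly the argument the paper intends: the lemma is stated without proof because it follows immediately from Definition \ref{FredMor-defi-indbund} together with formula \eqref{BanachBundles-cor-prenaturality}, which the paper explicitly flags (and leaves to the reader) just before Theorem \ref{FredMor-theorem-welldefined}. Your verification that $f^\ast\mathcal{V}$ remains transversal to $\im(f^\ast L)$ and that $E(f^\ast L,f^\ast\mathcal{V})\cong f^\ast E(L,\mathcal{V})$ via the total-space description is precisely the bookkeeping the paper delegates to the reader.
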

Note that we obtain as immediate consequence of the foregoing lemma that $\ind(f^\ast L)=\ind(g^\ast L)\in K(Y)$ if $f\simeq g:Y\rightarrow X$.  

\begin{lemma}[Homotopy invariance property]\label{FredMor-lemma-homotopy}
Let $\mathcal{E}$ and $\mathcal{F}$ be Banach bundles over $X\times I$ and let $L\in\mathfrak{F}(\mathcal{E},\mathcal{F})$ be a Fredholm morphism. Then

\begin{align*}
\ind(i^\ast_0L)=\ind(i^\ast_1L)\in K(X).
\end{align*}
\end{lemma}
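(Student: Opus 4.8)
The plan is to deduce the homotopy invariance property directly from the naturality lemma (Lemma~\ref{FredMor-lemma-naturality}) together with the fact, already used repeatedly in Section~3, that the two inclusions $\iota_0,\iota_1:X\hookrightarrow X\times I$ induce the same map on $K$-theory, $\iota_0^\ast=\iota_1^\ast:K(X\times I)\to K(X)$. So the argument is essentially one line once the right object is identified.

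First I would observe that $i_0^\ast L$ and $i_1^\ast L$ are Fredholm morphisms over the compact base $X$ (between $i_0^\ast\mathcal{E}\to i_0^\ast\mathcal{F}$ and $i_1^\ast\mathcal{E}\to i_1^\ast\mathcal{F}$ respectively), so their index bundles are defined by Definition~\ref{FredMor-defi-indbund}. Next, since $X\times I$ is compact and $L\in\mathfrak{F}(\mathcal{E},\mathcal{F})$ is itself a Fredholm morphism over $X\times I$, Theorem~\ref{FredMor-theorem-transversalexistence} already guarantees that $L$ has a well-defined index bundle $\ind(L)\in K(X\times I)$. Applying Lemma~\ref{FredMor-lemma-naturality} to the maps $i_0,i_1:X\to X\times I$ then gives
\begin{align*}
\ind(i_0^\ast L)=i_0^\ast\ind(L),\qquad \ind(i_1^\ast L)=i_1^\ast\ind(L)\in K(X).
\end{align*}
Finally, since $i_0$ and $i_1$ are homotopic as maps $X\to X\times I$ (the homotopy being $(x,s)\mapsto(x,s)$), the induced homomorphisms agree, $i_0^\ast=i_1^\ast$ on $K(X\times I)$, and therefore $\ind(i_0^\ast L)=i_1^\ast\ind(L)=\ind(i_1^\ast L)$, which is the claim.

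There is essentially no obstacle here; the only point requiring a moment's care is making sure the homotopy invariance of $K$-theory under $i_0\simeq i_1$ is genuinely available, but this is the classical fact that $K$ is a homotopy functor on compact spaces (the same fact invoked as ``$\iota_0^\ast=\iota_1^\ast$'' in Step~4 of the proof of Theorem~\ref{FredMor-theorem-welldefined}), so I would simply cite it. Alternatively, if one prefers to avoid even mentioning naturality, one can unwind the definitions exactly as in Step~4 of Theorem~\ref{FredMor-theorem-welldefined}: choose a finite dimensional subbundle $\mathcal{V}\subset\mathcal{F}$ transversal to $\im(L)$ over all of $X\times I$ (possible by Theorem~\ref{FredMor-theorem-transversalexistence}), note that $i_j^\ast\mathcal{V}$ is transversal to $\im(i_j^\ast L)$ with $E(i_j^\ast L,i_j^\ast\mathcal{V})=i_j^\ast E(L,\mathcal{V})$, and compute
\begin{align*}
\ind(i_0^\ast L)&=[E(i_0^\ast L,i_0^\ast\mathcal{V})]-[i_0^\ast\mathcal{V}]=i_0^\ast\big([E(L,\mathcal{V})]-[\mathcal{V}]\big)\\
&=i_1^\ast\big([E(L,\mathcal{V})]-[\mathcal{V}]\big)=[E(i_1^\ast L,i_1^\ast\mathcal{V})]-[i_1^\ast\mathcal{V}]=\ind(i_1^\ast L),
\end{align*}
using \eqref{BanachBundles-cor-prenaturality} for the pullback compatibility and $i_0^\ast=i_1^\ast$ on $K(X\times I)$ for the middle equality. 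Either route is routine; I would present the first as it is shorter.
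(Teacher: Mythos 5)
Your proof is correct, and it is exactly the argument the paper intends: the lemma is listed among the "immediate consequences of the definition" whose proofs are left to the reader, and deducing it from Lemma~\ref{FredMor-lemma-naturality} together with $\iota_0^\ast=\iota_1^\ast:K(X\times I)\to K(X)$ (or, equivalently, unwinding via \eqref{BanachBundles-cor-prenaturality} as in Step~4 of Theorem~\ref{FredMor-theorem-welldefined}) is the natural route. Both versions you give are fine; no gaps.
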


\begin{cor}[Invariance under compact perturbations]\label{FredMor-cor-compact}
Let $K\in\mathcal{L}(\mathcal{E},\mathcal{F})$ be a compact operator in every fibre. Then

\begin{align*}
\ind(L+K)=\ind(L)\in K(X).
\end{align*}
\end{cor}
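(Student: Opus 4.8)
The plan is to reduce the invariance under compact perturbations to the homotopy invariance property (Lemma~\ref{FredMor-lemma-homotopy}) together with the naturality lemma. First I would form the straight-line homotopy. Consider the base space $X\times I$ with projection $\pi:X\times I\to X$, and pull back the bundles to get $\pi^\ast\mathcal{E}$ and $\pi^\ast\mathcal{F}$ over $X\times I$. On these bundles define the morphism $\widehat{L}\in\mathcal{L}(\pi^\ast\mathcal{E},\pi^\ast\mathcal{F})$ by $\widehat{L}_{(\lambda,t)}=L_\lambda+tK_\lambda$. This is continuous because $L$ and $K$ are, and it restricts to $L$ over $X\times\{0\}$ and to $L+K$ over $X\times\{1\}$, i.e. $\iota^\ast_0\widehat{L}=L$ and $\iota^\ast_1\widehat{L}=L+K$.

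The key point to verify is that $\widehat{L}$ is a Fredholm morphism, i.e. that $\widehat{L}_{(\lambda,t)}=L_\lambda+tK_\lambda$ is Fredholm in each fibre. This is the standard fact that a compact perturbation of a Fredholm operator is again Fredholm, with the same index; here $tK_\lambda$ is compact for every $t$ since $K_\lambda$ is, so $L_\lambda+tK_\lambda$ is Fredholm of the same index as $L_\lambda$ for all $(\lambda,t)\in X\times I$. Hence $\widehat{L}\in\mathfrak{F}(\pi^\ast\mathcal{E},\pi^\ast\mathcal{F})$ and the index bundle $\ind(\widehat{L})$ is defined.

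Now apply Lemma~\ref{FredMor-lemma-homotopy} to $\widehat{L}$ over the base $X\times I$: it gives $\ind(\iota^\ast_0\widehat{L})=\ind(\iota^\ast_1\widehat{L})\in K(X)$, that is,
\begin{align*}
\ind(L)=\ind(\iota^\ast_0\widehat{L})=\ind(\iota^\ast_1\widehat{L})=\ind(L+K),
\end{align*}
which is exactly the claim. The only genuine obstacle is the first, analytic step, namely checking that $L_\lambda+tK_\lambda$ stays Fredholm (of constant index) across the homotopy and that $\widehat{L}$ is a continuous bundle morphism; both are classical and can be dispatched by invoking the stability of the Fredholm property under compact perturbations fibrewise, together with continuity of $(\lambda,t)\mapsto L_\lambda+tK_\lambda$ inherited from continuity of $L$ and $K$ and of scalar multiplication. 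Everything else is a formal application of the already-established homotopy invariance, so the corollary follows.
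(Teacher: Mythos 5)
Your proof is correct and is precisely the argument the paper intends: the statement is presented as a corollary of the homotopy invariance lemma, and you deform $L$ to $L+K$ along the straight-line homotopy $\widehat{L}_{(\lambda,t)}=L_\lambda+tK_\lambda$, noting that each $tK_\lambda$ is compact so the Fredholm property (and the index, hence membership in $\mathfrak{F}(\pi^\ast\mathcal{E},\pi^\ast\mathcal{F})$) is preserved along the way, and then apply Lemma~\ref{FredMor-lemma-homotopy}.
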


\begin{lemma}[Direct sum property]\label{FredMor-lemma-sum}
Let $M\in\mathfrak{F}(\tilde{\mathcal{E}},\tilde{\mathcal{F}})$ be a further Fredholm morphism between Banach bundles $\tilde{\mathcal{E}}$ and $\tilde{\mathcal{F}}$ over $X$. Then 

\begin{align*}
\ind(L\oplus M)=\ind(L)+\ind(M)\in K(X).
\end{align*}
\end{lemma}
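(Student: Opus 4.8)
The plan is to reduce everything to a single transversality choice that works simultaneously for $L$ and $M$. First I would pick a finite dimensional trivial subbundle $\mathcal{V}\subset\mathcal{F}$ transversal to $\im(L)$ and a finite dimensional trivial subbundle $\mathcal{W}\subset\tilde{\mathcal{F}}$ transversal to $\im(M)$, which exist by Theorem \ref{FredMor-theorem-transversalexistence}. The key observation is that $\mathcal{V}\oplus\mathcal{W}$, regarded as a subbundle of $\mathcal{F}\oplus\tilde{\mathcal{F}}$, is transversal to $\im(L\oplus M)$, since $\im(L_\lambda\oplus M_\lambda)=\im(L_\lambda)\oplus\im(M_\lambda)$ in the fibre $\mathcal{F}_\lambda\oplus\tilde{\mathcal{F}}_\lambda$, so that
\begin{align*}
\im(L_\lambda\oplus M_\lambda)+(\mathcal{V}_\lambda\oplus\mathcal{W}_\lambda)=(\im(L_\lambda)+\mathcal{V}_\lambda)\oplus(\im(M_\lambda)+\mathcal{W}_\lambda)=\mathcal{F}_\lambda\oplus\tilde{\mathcal{F}}_\lambda.
\end{align*}
Therefore $\mathcal{V}\oplus\mathcal{W}$ is an admissible choice for computing $\ind(L\oplus M)$ by Definition \ref{FredMor-defi-indbund}.

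Next I would identify the bundle $E(L\oplus M,\mathcal{V}\oplus\mathcal{W})$. By its very description as a total space, $u\oplus u'$ lies in the fibre over $\lambda$ precisely when $(L_\lambda\oplus M_\lambda)(u\oplus u')=L_\lambda u\oplus M_\lambda u'\in\mathcal{V}_\lambda\oplus\mathcal{W}_\lambda$, which holds if and only if $L_\lambda u\in\mathcal{V}_\lambda$ and $M_\lambda u'\in\mathcal{W}_\lambda$. Hence there is a canonical fibrewise identification $E(L\oplus M,\mathcal{V}\oplus\mathcal{W})=E(L,\mathcal{V})\oplus E(M,\mathcal{W})$, and one checks readily that this identification is an isomorphism of subbundles of $\mathcal{E}\oplus\tilde{\mathcal{E}}$ (it is induced by the obvious bundle maps, so continuity is automatic). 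Combining these two identifications,
\begin{align*}
\ind(L\oplus M)&=[E(L\oplus M,\mathcal{V}\oplus\mathcal{W})]-[\mathcal{V}\oplus\mathcal{W}]\\
&=[E(L,\mathcal{V})\oplus E(M,\mathcal{W})]-[\mathcal{V}\oplus\mathcal{W}]\\
&=\bigl([E(L,\mathcal{V})]-[\mathcal{V}]\bigr)+\bigl([E(M,\mathcal{W})]-[\mathcal{W}]\bigr)=\ind(L)+\ind(M),
\end{align*}
using additivity of the class map $[\cdot]$ under direct sums in $K(X)$.

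There is no real obstacle here; the only point requiring a word of care is that Definition \ref{FredMor-defi-indbund} a priori only guarantees transversal subbundles of $\mathcal{F}$ (resp.\ $\tilde{\mathcal{F}}$), so one must verify that $\mathcal{V}\oplus\mathcal{W}$ is genuinely a subbundle of $\mathcal{F}\oplus\tilde{\mathcal{F}}$ and that it is finite dimensional and transversal — all of which is immediate from the fibrewise computation above together with Proposition \ref{BanachBundles-prop-projections} applied to the direct sum of the two fibrewise projections. Since Theorem \ref{FredMor-theorem-welldefined} tells us the answer is independent of the chosen transversal subbundle, we are free to use this particular one, and the computation closes. In the write-up I would present this as a short sequence of canonical isomorphisms, leaving the routine continuity checks to the reader in keeping with the stated convention of this section.
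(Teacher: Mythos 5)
Your proof is correct. The paper in fact leaves this proof to the reader (``Since most of them are immediate consequences of the definition, we leave almost all proofs to the reader''), and your argument is exactly the intended one: take transversal subbundles $\mathcal{V}$ and $\mathcal{W}$ from Theorem \ref{FredMor-theorem-transversalexistence}, observe fibrewise that $\mathcal{V}\oplus\mathcal{W}$ is transversal to $\im(L\oplus M)$ and that $E(L\oplus M,\mathcal{V}\oplus\mathcal{W})=E(L,\mathcal{V})\oplus E(M,\mathcal{W})$, then invoke additivity of $[\cdot]$ in $K(X)$ together with the well-definedness from Theorem \ref{FredMor-theorem-welldefined}.
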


Although the proof of the following important property is also quite elementary, we include it for the convenience of the reader.

\begin{lemma}[Logarithmic property]\label{FredMor-lemma-log}
Let $\mathcal{G}$ be a Banach bundle over $X$ and let $M\in\mathfrak{F}(\mathcal{F},\mathcal{G})$ be a further Fredholm morphism. Then

\begin{align*}
\ind(M\circ L)=\ind(M)+\ind(L)\in K(X).
\end{align*}

\end{lemma}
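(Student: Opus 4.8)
The plan is to reduce the logarithmic property to the direct sum property by a standard homotopy trick, exactly as one does for the ordinary Fredholm index. First I would choose a finite dimensional trivial subbundle $\mathcal{W}\subset\mathcal{G}$ transversal to $\im(M)$ using Theorem~\ref{FredMor-theorem-transversalexistence}. Then $E(M,\mathcal{W})$ is a direct subbundle of $\mathcal{F}$, and by Corollary~\ref{BanachBundles-cor-splitting} there is a complement, so $\mathcal{F}=E(M,\mathcal{W})\oplus\mathcal{F}'$ with $M\mid_{\mathcal{F}'}$ a fibrewise isomorphism onto a complement of $\mathcal{W}$ in $\mathcal{G}$. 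Next, inside $\mathcal{F}$ I would pick a finite dimensional trivial subbundle $\mathcal{V}$ transversal to $\im(L)$; enlarging $\mathcal{V}$ if necessary (using Step~1 of the proof of Theorem~\ref{FredMor-theorem-welldefined}, i.e. well-definedness under passing to larger transversal subbundles) I may assume $\mathcal{V}\supset$ a subbundle mapped by $M$ onto $\mathcal{W}$, or more cleanly just work with $\mathcal{V}$ and track where $M$ sends things.

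The cleanest route is the classical one: consider on $X\times[0,1]$ the morphism
\begin{align*}
H_t=\begin{pmatrix} M & (1-t)\cdot(\text{something}) \\ 0 & L\end{pmatrix}
\end{align*}
realising a homotopy between $\begin{pmatrix} M & 0\\ 0 & L\end{pmatrix}$ and a morphism built out of $M\circ L$. Concretely, I would form the Fredholm morphism $\mathcal{E}\oplus\mathcal{F}\to\mathcal{G}\oplus\mathcal{F}$ given in block form by $\begin{pmatrix}ML & M\\ 0 & I\end{pmatrix}$ (after replacing $I$ on the lower right by an isomorphism where needed), and exhibit an explicit path in $\mathfrak{F}(\mathcal{E}\oplus\mathcal{F},\mathcal{G}\oplus\mathcal{F})$ connecting it to $\begin{pmatrix}M & 0\\ 0 & L\end{pmatrix}$ and also to $\begin{pmatrix}ML & 0\\ 0 & I\end{pmatrix}$, using rotations $\begin{pmatrix}\cos\theta & -\sin\theta\\ \sin\theta & \cos\theta\end{pmatrix}$ in the $2\times 2$ block picture, which are bundle isomorphisms in each fibre and hence legitimate paths of Fredholm morphisms. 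Applying the homotopy invariance property (Lemma~\ref{FredMor-lemma-homotopy}) along each path, together with the direct sum property (Lemma~\ref{FredMor-lemma-sum}) and the normalisation lemma (Lemma~\ref{FredMor-lemma-normalization}) for the identity/isomorphism block, yields
\begin{align*}
\ind(M)+\ind(L)=\ind\left(\begin{pmatrix}M & 0\\ 0 & L\end{pmatrix}\right)=\ind\left(\begin{pmatrix}ML & 0\\ 0 & I\end{pmatrix}\right)=\ind(ML)+0=\ind(ML).
\end{align*}

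The main obstacle I anticipate is making the block morphisms genuinely well-defined bundle morphisms: the entry ``$M$'' in position $(1,2)$ maps $\mathcal{F}\to\mathcal{G}$ fine, but the lower right entry must be an \emph{isomorphism} $\mathcal{F}\to\mathcal{F}$ (or $\mathcal{F}\to$ something), and one has to be careful that $ML:\mathcal{E}\to\mathcal{G}$ together with these auxiliary maps assemble into a morphism of the correct index in each fibre so that the homotopies stay within $\mathfrak{F}$. This is purely a bookkeeping issue with the Banach bundle structure — continuity of the block assembly is automatic since composition and the rotation matrices are continuous in the operator norm — but it should be written out carefully. An alternative, avoiding homotopies of non-square systems, is to compute both sides directly: choose $\mathcal{W}$ transversal to $\im M$ and then $\mathcal{V}$ transversal to $\im L$ large enough that $M(\mathcal{V})\supset$ a complement data, observe $M^{-1}(\mathcal{W})$ meets $\im L$ transversally, and check $E(ML,\mathcal{W})\cong E(L,\mathcal{V})$ and $[\mathcal{W}]-[\mathcal{V}]$ bookkeeping matches via the isomorphism $M\mid_{\mathcal{F}'}$; this is more hands-on but also more fragile, so I would favour the homotopy argument.
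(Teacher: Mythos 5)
Your primary route---the rotation homotopy on block morphisms---is genuinely different from the paper's and can be pushed through, but the issue you flag as ``purely bookkeeping'' is more than that: the two ends you want to connect, $\left(\begin{smallmatrix}M&0\\0&L\end{smallmatrix}\right)$ and $\left(\begin{smallmatrix}ML&0\\0&I\end{smallmatrix}\right)$, do not even share a domain bundle (the first acts $\mathcal{F}\oplus\mathcal{E}\to\mathcal{G}\oplus\mathcal{F}$, the second $\mathcal{E}\oplus\mathcal{F}\to\mathcal{G}\oplus\mathcal{F}$), so a rotation on the source would have to mix $\mathcal{E}$ and $\mathcal{F}$, which makes no sense for general bundles. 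The fix is to place the rotation on the middle factor: factor $\left(\begin{smallmatrix}ML&0\\0&I\end{smallmatrix}\right)=\left(\begin{smallmatrix}M&0\\0&I\end{smallmatrix}\right)\circ\left(\begin{smallmatrix}L&0\\0&I\end{smallmatrix}\right)$ through $\mathcal{F}\oplus\mathcal{F}$ and insert $R_\theta\in GL(\mathcal{F}\oplus\mathcal{F})$ between the two factors. At $\theta=\pi/2$ one gets $\left(\begin{smallmatrix}0&-M\\L&0\end{smallmatrix}\right):\mathcal{E}\oplus\mathcal{F}\to\mathcal{G}\oplus\mathcal{F}$, which is the swap $\mathcal{F}\oplus\mathcal{G}\to\mathcal{G}\oplus\mathcal{F}$ composed with $L\oplus(-M)$; you then still need the (true, but unproven in your sketch) fact that pre- or post-composing with a bundle isomorphism preserves $\ind$, in order to invoke the direct sum property. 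Each morphism along the path is Fredholm, being a composition of two Fredholm morphisms with an invertible one, so Lemma~\ref{FredMor-lemma-homotopy} applies and the argument closes with Lemmas~\ref{FredMor-lemma-sum} and~\ref{FredMor-lemma-normalization}.

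The paper instead takes the route you dismiss as ``more fragile,'' and with the right choice of auxiliary bundle it is actually the cleaner one. Picking a finite dimensional $\mathcal{W}\subset\mathcal{G}$ transversal to $\im(M\circ L)$, one checks that $\mathcal{W}$ is also transversal to $\im(M)$, that the finite dimensional bundle $E(M,\mathcal{W})=M^{-1}(\mathcal{W})\subset\mathcal{F}$ is transversal to $\im(L)$, and that $E(M\circ L,\mathcal{W})=E(L,E(M,\mathcal{W}))$. Taking $\mathcal{V}:=E(M,\mathcal{W})$ in the definition of $\ind(L)$ then produces the telescoping identity
\begin{align*}
\ind(M\circ L)=[E(L,E(M,\mathcal{W}))]-[E(M,\mathcal{W})]+[E(M,\mathcal{W})]-[\mathcal{W}]=\ind(L)+\ind(M),
\end{align*}
with no enlargement of $\mathcal{V}$, no homotopy, and no isomorphism bookkeeping. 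What your ``hands-on'' variant misses is that one should not choose $\mathcal{V}$ and $\mathcal{W}$ independently and then try to match them through $M$; one should simply set $\mathcal{V}=M^{-1}(\mathcal{W})$, after which everything lines up exactly.
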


\begin{proof}
Let $\mathcal{W}\subset\mathcal{G}$ be a finite dimensional subbundle which is transversal to $\im(M\circ L)$. Then $\mathcal{W}$ is transversal to $\im(M)$ as well and hence $E(M,\mathcal{W})$ is defined. Our first aim is to show that $E(M,\mathcal{W})$ is transversal to $\im(L)$.\\
In order to do so, let $u\in\mathcal{F}_\lambda$ for some $\lambda\in X$. Then $M_\lambda u\in\mathcal{G}_\lambda$ and hence we can find $w_0\in\mathcal\im(M_\lambda L_\lambda)$ and $w_1\in\mathcal{W}_\lambda$ such that $M_\lambda u=w_0+w_1$. Now we choose $u_0\in\im(L_\lambda)$ such that $M_\lambda u_0=w_0$ and set $u_1=u-u_0\in\mathcal{F}_\lambda$. Then $M_\lambda u_1=w_1\in\mathcal{W}_\lambda$ and hence $u=u_0+u_1$ where $u_0\in\im(L_\lambda)$ and $u_1\in M^{-1}_\lambda(\mathcal{W}_\lambda)$. Thus $\im(L_\lambda)+M^{-1}_\lambda(\mathcal{W}_\lambda)=\mathcal{F}_\lambda$, $\lambda\in X$, which proves that $E(M,\mathcal{W})$ is transversal to $\im(L)$.\\
Next we observe that

\begin{align*}
E(M\circ L,\mathcal{W})_\lambda&=\{u\in\mathcal{E}_\lambda:M_\lambda L_\lambda u\in\mathcal{W}_\lambda\}=\{u\in\mathcal{E}_\lambda:L_\lambda u\in M^{-1}_\lambda(\mathcal{W}_\lambda)\}\\
&=\{u\in\mathcal{E}_\lambda:L_\lambda u\in E(M,\mathcal{W})_\lambda\}=E(L,E(M,\mathcal{W}))_\lambda,\quad\lambda\in X,
\end{align*}
and hence $E(M\circ L,\mathcal{W})=E(L,E(M,\mathcal{W}))$. We finally obtain

\begin{align*}
\ind(M\circ L)&=[E(M\circ L,\mathcal{W})]-[\mathcal{W}]\\
&=[E(L,E(M,\mathcal{W}))]-[E(M,\mathcal{W})]+[E(M,\mathcal{W})]-[\mathcal{W}]\\
&=\ind(L)+\ind(M).
\end{align*}
\end{proof}

The following reduction property is in particular not available in this general form in the classical definition of the index bundle for families of operators acting between fixed Banach spaces. Note that $\mathcal{V}$ is not assumed to be of finite dimension here.

\begin{lemma}\label{FredMor-lemma-reduction}
Let $\mathcal{V}$ be a direct subbundle of $\mathcal{F}$ which is transversal to $\im(L)$ in the sense of \eqref{FredMor-align-transversal} and such that $L^{-1}_\lambda(\mathcal{V}_\lambda)$ is a complemented subspace of $\mathcal{E}_\lambda$ for all $\lambda\in X$. Then $E(L,\mathcal{V})$ is a direct subbundle of $\mathcal{E}$, $\tilde{L}:=L\mid_{E(L,\mathcal{V})}$ defines an element of $\mathfrak{F}(E(L,\mathcal{V}),\mathcal{V})$ and

\begin{align*}
\ind L=\ind\tilde L\in K(X).
\end{align*}

\end{lemma}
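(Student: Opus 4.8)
The strategy is to reduce to the already-established definition of the index bundle by exhibiting a finite-dimensional subbundle of $\mathcal{V}$ that is transversal to $\im(\tilde L)$ and checking that the ``$E$-bundles'' formed downstairs and upstairs agree. First I would justify the three structural claims in the statement. That $E(L,\mathcal{V})$ is a direct subbundle of $\mathcal{E}$: by hypothesis $\mathcal{V}$ is a direct subbundle of $\mathcal{F}$, so by Proposition~\ref{BanachBundles-prop-projections} there is a projection $P\in\mathcal{L}(\mathcal{F})$ with $\im(P)=\mathcal{V}$, hence $\im(I-P)$ is a direct subbundle and, by the transversality~\eqref{FredMor-align-transversal}, the composite $(I-P)\circ L\colon\mathcal{E}\to\im(I-P)$ is a fibrewise surjection with kernel exactly $E(L,\mathcal{V})=\{u:L_\lambda u\in\mathcal{V}_\lambda\}$; since each $L^{-1}_\lambda(\mathcal{V}_\lambda)$ is complemented in $\mathcal{E}_\lambda$ by assumption, Lemma~\ref{BanachBundles-cor-kerbundle}(ii) gives that $E(L,\mathcal{V})$ is a direct subbundle. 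That $\tilde L=L\mid_{E(L,\mathcal{V})}$ is a Fredholm morphism into $\mathcal{V}$ is then a fibrewise statement: for $u\in E(L,\mathcal{V})_\lambda$ we have $L_\lambda u\in\mathcal{V}_\lambda$, $\ker(\tilde L_\lambda)=\ker(L_\lambda)$ is finite dimensional, and $\coker(\tilde L_\lambda)=\mathcal{V}_\lambda/(\im L_\lambda\cap\mathcal{V}_\lambda)$ is finite dimensional because $\mathcal{V}_\lambda$ itself need not be, but one checks $\mathcal{V}_\lambda=\im(\tilde L_\lambda)\oplus(\text{a complement of }\im L_\lambda\text{ in }\mathcal{F}_\lambda)$, which is finite codimensional in $\mathcal{V}_\lambda$ since $\im L_\lambda$ has finite codimension in $\mathcal{F}_\lambda$; more carefully, $\ind(\tilde L_\lambda)=\dim\ker L_\lambda-\dim(\mathcal{V}_\lambda/\im\tilde L_\lambda)=\ind(L_\lambda)$ by a bookkeeping identity analogous to~\eqref{FredMor-lemma-dimE}.

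Second, and this is the heart of the matter, I would produce a finite-dimensional subbundle $\mathcal{V}_0\subset\mathcal{V}$ transversal to $\im(\tilde L)$ and such that $E(\tilde L,\mathcal{V}_0)=E(L,\mathcal{V}_0)$. Start from Theorem~\ref{FredMor-theorem-transversalexistence} applied to the Fredholm morphism $\tilde L\colon E(L,\mathcal{V})\to\mathcal{V}$: there is a finite-dimensional trivial subbundle $\mathcal{V}_0\subset\mathcal{V}$ with $\im(\tilde L_\lambda)+(\mathcal{V}_0)_\lambda=\mathcal{V}_\lambda$ for all $\lambda$. I claim $\mathcal{V}_0$ is also transversal to $\im(L)$ in $\mathcal{F}$: indeed $\im(L_\lambda)+(\mathcal{V}_0)_\lambda\supseteq\im(L_\lambda)+\bigl(\im(\tilde L_\lambda)+(\mathcal{V}_0)_\lambda\bigr)$ modulo $\im L_\lambda$; more simply, $\im(L_\lambda)+(\mathcal{V}_0)_\lambda=\im(L_\lambda)+\mathcal{V}_\lambda=\mathcal{F}_\lambda$ because $\im(\tilde L_\lambda)\subseteq\im(L_\lambda)$ so adding $(\mathcal V_0)_\lambda$ recovers all of $\mathcal V_\lambda$ modulo $\im L_\lambda$. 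Hence both $E(L,\mathcal{V}_0)$ and $E(\tilde L,\mathcal{V}_0)$ are defined. They have the same total space: $u\in E(\tilde L,\mathcal V_0)_\lambda$ means $u\in E(L,\mathcal V)_\lambda$ and $\tilde L_\lambda u=L_\lambda u\in(\mathcal V_0)_\lambda$; since $(\mathcal V_0)_\lambda\subset\mathcal V_\lambda$, the condition $L_\lambda u\in(\mathcal V_0)_\lambda$ already forces $u\in E(L,\mathcal V)_\lambda$, so $E(\tilde L,\mathcal V_0)_\lambda=\{u\in\mathcal E_\lambda:L_\lambda u\in(\mathcal V_0)_\lambda\}=E(L,\mathcal V_0)_\lambda$, and these agree as subbundles.

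Third, I would conclude by computing with the definition. Using Definition~\ref{FredMor-defi-indbund} for $\tilde L$ with the transversal finite-dimensional subbundle $\mathcal V_0\subset\mathcal V$, and for $L$ with the same $\mathcal V_0\subset\mathcal F$,
\begin{align*}
\ind(\tilde L)=[E(\tilde L,\mathcal V_0)]-[\mathcal V_0]=[E(L,\mathcal V_0)]-[\mathcal V_0]=\ind(L)\in K(X),
\end{align*}
where the outer equalities are instances of Definition~\ref{FredMor-defi-indbund} (and Theorem~\ref{FredMor-theorem-welldefined} guarantees independence of the choice $\mathcal V_0$). This finishes the argument.

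\textbf{Expected main obstacle.} The routine-looking but genuinely load-bearing point is the careful verification that $E(L,\mathcal{V})$ really is a \emph{direct} subbundle of $\mathcal{E}$ and that $\tilde L$ lands in $\mathfrak{F}(E(L,\mathcal V),\mathcal V)$ with the right index, since $\mathcal V$ is infinite dimensional and so Lemma~\ref{BanachBundles-cor-kerbundle} must be invoked in its form~(ii) with the complementedness hypothesis on $L^{-1}_\lambda(\mathcal V_\lambda)$ doing exactly the work it is placed there to do; one must also make sure the fibrewise Fredholm-index bookkeeping $\ind\tilde L_\lambda=\ind L_\lambda$ is uniform enough that $\tilde L$ is a genuine element of $\mathfrak F(E(L,\mathcal V),\mathcal V)$ rather than merely fibrewise Fredholm. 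Once that is in place, the reduction to Definition~\ref{FredMor-defi-indbund} via $\mathcal V_0$ is formal.
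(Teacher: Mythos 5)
Your proposal follows exactly the paper's route: establish that $E(L,\mathcal V)$ is a direct subbundle and $\tilde L$ is fibrewise Fredholm, then pick a finite-dimensional $\mathcal W\subset\mathcal V$ transversal to $\im\tilde L$ (your $\mathcal V_0$), observe it is also transversal to $\im L$ and that $E(L,\mathcal W)=E(\tilde L,\mathcal W)$, and conclude by Definition~\ref{FredMor-defi-indbund}. You simply supply the details that the paper leaves to the reader (the role of Lemma~\ref{BanachBundles-cor-kerbundle}(ii), the isomorphism $\coker\tilde L_\lambda\cong\mathcal F_\lambda/\im L_\lambda$), so this is the same argument, just more explicit.
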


\begin{proof}
It is clear that $E(L,\mathcal{V})$ can be defined as before under the given assumptions on $\mathcal{V}$. Moreover, since $E(L,\mathcal{V})$ is a direct subbundle of $\mathcal{E}$ and $\mathcal{V}$ is a direct subbundle of $\mathcal{F}$, the restriction of $L$ is a bundle morphism. Hence in order to show the second assertion we just have to prove that $\tilde{L}$ is a Fredholm operator in each fibre which follows immediately from $\ker(L_\lambda)=\ker(\tilde L_\lambda)$ and $\im(\tilde L_\lambda)=\im(L_\lambda)\cap\mathcal{V}_\lambda$ for all $\lambda\in X$.\\
To prove the final assertion, let $\mathcal{W}\subset\mathcal{V}$ be a finite dimensional subbundle which is transversal to the image of $\tilde L$. Then $\mathcal{W}$ is transversal to the image of $L$ as well. Moreover, since $\mathcal{W}\subset\mathcal{V}$, we deduce that $E(L,\mathcal{W})=E(\tilde{L},\mathcal{W})$ and the restrictions of $L$ and $\tilde{L}$ to this bundle coincide. Hence

\begin{align*}
\ind\tilde L=[E(\tilde L,\mathcal{W})]-[\mathcal{W}]=[E(L,\mathcal{W})]-[\mathcal{W}]=\ind L.
\end{align*}

\end{proof}

Finally, we want to show that in the case $\mathcal{E}=\mathcal{F}=\Theta(E)$, our definition of $\ind(L)$ is just the classical one as described in section \ref{BanachBundles}.\\
Let $L:X\rightarrow\mathcal{BF}(E)$ be a norm continuous family of bounded Fredholm operators. Let $E_1\subset E$ be a finite codimensional closed subspace such that $\ker L_\lambda\cap E_1=\{0\}$ for all $\lambda\in X$ and set $\mathcal{W}=\im(I-P)\subset\Theta(E)$, where $P$ denotes a projection onto the subbundle $\im(L\mid_{X\times E_1})$ of $\Theta(E)$. Then the index bundle in the classical sense is defined by

\begin{align*}
[\Theta(E/E_1)]-[\mathcal{W}]\in K(X).
\end{align*}
Note that by construction 

\begin{align}\label{FredMor-align-AJdirect}
\im(L_\lambda\mid_{E_1})\oplus\mathcal{W}_\lambda=E_\lambda
\end{align}
which implies that $\im(L_\lambda)+\mathcal{W}_\lambda=E_\lambda$, $\lambda\in X$. Hence we can use $\mathcal{W}$ in order to build the index bundle according to our definition \ref{FredMor-defi-indbund} and obtain

\begin{align*}
\ind L=[E(L,\mathcal{W})]-[\mathcal{W}]\in K(X).
\end{align*}
Now, since $E(L,\mathcal{W})\subset \Theta(E)$ by definition, we have a well defined map

\begin{align*}
F:E(L,\mathcal{W})\rightarrow \Theta(E/E_1),\quad (\lambda,u)\mapsto (\lambda,[u])
\end{align*}
and in order to show that both definitions of the index bundle coincide we want to prove that $F$ is a bundle isomorphism.\\
Note at first that $F$ is obviously continuous and hence it suffices to show that each $F_\lambda$, $\lambda\in X$, is bijective. If $F_\lambda(u)=[u]=0$, then $u\in E_1$ because $E_1$ is closed. Hence $L_\lambda u\in\im(L_\lambda\mid_{E_1})$ and since $L_\lambda u\in\mathcal{W}_\lambda$ by definition of $E(L,\mathcal{W})$, we infer that $L_\lambda u=0$ by \eqref{FredMor-align-AJdirect}. But using that $u\in E_1$ once again, we conclude that $u=0$ and hence $F_\lambda$ is injective. Moreover, we have

\begin{align*}
\dim\mathcal{W}_\lambda&=\dim\coker(L_\lambda\mid_{E_1})=-\ind(L_\lambda\mid_{E_1})=\dim E/E_1-\ind L_\lambda
\end{align*} 
and comparing this equality with \eqref{FredMor-lemma-dimE} yields the assertion.


\section{The kernel of the index map}
As already mentioned in the introduction, the Atiyah-J\"anich theorem states that the index map \eqref{indmap} is an isomorphism for separable Hilbert spaces $H$ and it is proved by using the exactness of \eqref{exactsequence} and Kuiper's theorem. In the case of Banach spaces $E$, the group $GL(E)$ needs no longer be contractible. Moreover, to the best knowledge of the author it is unknown if the index map is surjective in general. However, at least the sequence

\begin{align*}
1\rightarrow[X,GL(E)]\xrightarrow{\iota}[X,\mathcal{BF}(E)]\xrightarrow{\ind}K(X)
\end{align*} 
is still exact (cf. \cite[Theorem 2.1]{Banach Bundles}), that is, the kernel of the index map is given by the set of all families which can be deformed to a family of invertible operators.\\
The aim of this section is to show that this result also holds for Fredholm morphisms between Banach bundles, which is a consequence of the following proposition.

\begin{prop}\label{Bif-prop-parametrix}
Let $X$ be a compact topological space, let $\mathcal{E},\mathcal{F}$ be Banach bundles over $X$ and let $L\in\mathfrak{F}(\mathcal{E},\mathcal{F})$ be a Fredholm morphism. Then $\ind(L)=0\in K(X)$ if and only if there exist a bundle isomorphism $M\in GL(\mathcal{E},\mathcal{F})$ and a fibrewise compact morphism $K\in\mathcal{L}(\mathcal{E},\mathcal{F})$ such that $L=M+K$.
\end{prop}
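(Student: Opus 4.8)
The plan is to prove both implications. The direction ``$L = M + K$ with $M$ invertible and $K$ fibrewise compact implies $\ind(L) = 0$'' is immediate: by Lemma~\ref{FredMor-lemma-normalization} (Normalisation) $\ind(M) = 0$, and by Corollary~\ref{FredMor-cor-compact} (Invariance under compact perturbations) $\ind(L) = \ind(M + K) = \ind(M) = 0$. So the whole content lies in the converse. Assume $\ind(L) = 0$.

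First I would choose, by Theorem~\ref{FredMor-theorem-transversalexistence}, a finite dimensional \emph{trivial} subbundle $\mathcal{V} \subset \mathcal{F}$ transversal to $\im(L)$, so that $\ind(L) = [E(L,\mathcal{V})] - [\mathcal{V}] = 0 \in K(X)$. Since $\mathcal{V}$ is trivial, say $\mathcal{V} \cong \Theta(\mathbb{K}^n)$, the relation $[E(L,\mathcal{V})] = [\mathcal{V}] = [\Theta(\mathbb{K}^n)]$ in $K(X)$ means that there is a finite dimensional vector bundle $\mathcal{R}$ over $X$ with $E(L,\mathcal{V}) \oplus \mathcal{R} \cong \Theta(\mathbb{K}^n) \oplus \mathcal{R}$. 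Adding a further bundle we may take $\mathcal{R}$ itself to be trivial, so after enlarging $n$ we get $E(L,\mathcal{V}) \oplus \Theta(\mathbb{K}^m) \cong \Theta(\mathbb{K}^n)$ for suitable $m,n$ with $n = m + \dim E(L,\mathcal{V})_\lambda$. Now I want to absorb the trivial summand $\Theta(\mathbb{K}^m)$ into the construction. The clean way is to replace $\mathcal{V}$ by a larger transversal subbundle: pick a trivial $m$-dimensional subbundle $\mathcal{N}$ of $\mathcal{E}$ on which $L$ is fibrewise injective and whose image lands, after adjustment, transversally; more precisely, using Corollary~\ref{BanachBundles-cor-nowhere} choose a trivial $m$-dimensional subbundle of a complement of $E(L,\mathcal{V})$ in $\mathcal{E}$, and let $\mathcal{V}' = \mathcal{V} \oplus L(\mathcal{N})$, which is again a finite dimensional trivial subbundle of $\mathcal{F}$ transversal to $\im(L)$, with $E(L,\mathcal{V}') = E(L,\mathcal{V}) \oplus \mathcal{N}$ by the computation in Step~1 of Theorem~\ref{FredMor-theorem-welldefined}. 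After this replacement we may assume outright that $E(L,\mathcal{V})$ is a \emph{trivial} bundle of the same rank as $\mathcal{V}$.

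Now the key geometric step: since $E(L,\mathcal{V})$ is trivial of rank $n$ and $\mathcal{V}$ is trivial of rank $n$, and $L$ restricts to a fibrewise injective morphism $E(L,\mathcal{V}) \to \mathcal{V}$ with $\im(L|_{E(L,\mathcal{V})_\lambda})$ of dimension $n - (n - \dim E(L,\mathcal{V})_\lambda)$... — rather, I would argue directly with the splitting. By construction $E(L,\mathcal{V})$ is a direct subbundle of $\mathcal{E}$; choose (Corollary~\ref{BanachBundles-cor-splitting}) a complement so that $\mathcal{E} = E(L,\mathcal{V}) \oplus \mathcal{E}_1$, and by transversality $\mathcal{F} = \mathcal{V} \oplus \im(L|_{\mathcal{E}_1})$ with $L|_{\mathcal{E}_1} \colon \mathcal{E}_1 \to \im(L|_{\mathcal{E}_1})$ an isomorphism (here $\mathcal{E}_1 = E(L,\mathcal{V})^\perp$ in the earlier notation, and $L$ is injective on it). Thus with respect to these splittings $L$ has block form $\begin{pmatrix} A & 0 \\ 0 & B \end{pmatrix}$ after a fibrewise-finite-rank modification, where $B = L|_{\mathcal{E}_1}$ is already invertible onto its image and $A \colon E(L,\mathcal{V}) \to \mathcal{V}$ is a morphism of trivial bundles of equal rank $n$. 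I would then define $M$ to agree with $L$ on $\mathcal{E}_1$ and to be \emph{any} fixed bundle isomorphism $E(L,\mathcal{V}) \to \mathcal{V}$ (which exists since both are trivial $\Theta(\mathbb{K}^n)$); then $M \in GL(\mathcal{E},\mathcal{F})$, and $K := L - M$ vanishes on $\mathcal{E}_1$ and maps $E(L,\mathcal{V})$, a finite dimensional bundle, into $\mathcal{F}$, hence has finite rank in every fibre and in particular is fibrewise compact. Therefore $L = M + K$ as required.

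The main obstacle I expect is the bookkeeping in the second paragraph: turning the abstract $K$-theory equation $[E(L,\mathcal{V})] = [\mathcal{V}]$ into an honest isomorphism $E(L,\mathcal{V}) \cong \mathcal{V}$ of subbundles of $\mathcal{F}$ (equivalently of $\mathcal{E}$), via stabilisation, and then realising the stabilising trivial summand geometrically inside $\mathcal{E}$ and $\mathcal{F}$ without destroying transversality. This is exactly where the ``finite dimensional subbundles of infinite dimensional bundles behave flexibly'' results of Section~\ref{BanachBundles} — Corollary~\ref{BanachBundles-cor-nowhere}, Proposition~\ref{BanachBundles-prop-homotopic} and Corollary~\ref{BanachBundles-cor-homotopicII} — have to be invoked carefully; in particular one needs $X$ compact so that stabilisation terminates and so that $[E(L,\mathcal{V})] = [\mathcal{V}]$ really does force stable isomorphism. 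Once $E(L,\mathcal{V})$ and $\mathcal{V}$ are arranged to be isomorphic trivial bundles, the construction of $M$ and $K$ is routine.
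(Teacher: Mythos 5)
Your proof is correct and follows essentially the same two-step strategy as the paper's: first enlarge the transversal subbundle $\mathcal{V}$ so that $E(L,\mathcal{V})$ becomes trivial of the same rank as $\mathcal{V}$ (you push a trivial subbundle $\mathcal{N}$ of a complement of $E(L,\mathcal{V})$ in $\mathcal{E}$ forward through $L$, whereas the paper adjoins a disjoint trivial summand $\mathcal{V}'\subset\mathcal{F}$ and identifies $E(L,\mathcal{V})^\perp$ with $\mathcal{V}'$ via $P_{\mathcal{V}'}L$ --- the two are equivalent), and then build $M$ by gluing a fixed isomorphism $E(L,\mathcal{V})\to\mathcal{V}$ with $L$ on a complement (your explicit block construction is precisely the paper's $(I-Q)L+aP$ for the natural choices of projections $P,Q$, and saves the injectivity verification). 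One cosmetic remark: no ``fibrewise-finite-rank modification'' is needed to put $L$ in block-diagonal form relative to $\mathcal{E}=E(L,\mathcal{V})\oplus\mathcal{E}_1$ and $\mathcal{F}=\mathcal{V}\oplus\im(L|_{\mathcal{E}_1})$, since $L$ already maps $E(L,\mathcal{V})$ into $\mathcal{V}$ by definition.
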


\begin{proof}
Note at first that $\ind L=0$ if $M$ and $K$ exist because of lemma \ref{FredMor-lemma-normalization} and corollary \ref{FredMor-cor-compact}.\\
Before we treat the remaining assertion we want to show that under the assumption $\ind(L)=0$ we can find a finite dimensional trivial subbundle $\mathcal{V}\subset \mathcal{F}$ which is transversal to the image of $L$ such that the bundle $E(L,\mathcal{V})$ is trivial.\\
Let $\mathcal{V}\subset \mathcal{F}$ be any trivial finite dimensional subbundle transversal to the image of $L$. Since $\ind L=[E(L,\mathcal{V})]-[\mathcal{V}]=0\in K(X)$ we can find a trivial vector bundle $\mathcal{W}$ over $X$ such that 

\begin{align}\label{Bif-align-trivial}
E(L,\mathcal{V})\oplus\mathcal{W}\cong \mathcal{V}\oplus\mathcal{W}
\end{align}
and hence $E(L,\mathcal{V})\oplus\mathcal{W}$ is a trivial bundle. Now we choose a trivial subbundle $\mathcal{V}'\subset \mathcal{F}$ such that 

\begin{align*}
\dim\mathcal{V}'=\dim\mathcal{W},\quad \mathcal{V}_\lambda\cap \mathcal{V}'_\lambda=\{0\},\quad \lambda\in X,
\end{align*}
and consider the bundle $E(L,\mathcal{V}\oplus \mathcal{V}')$. Let $E(L,\mathcal{V})^\perp$ denote a complement of $E(L,\mathcal{V})$ in $E(L,\mathcal{V}\oplus \mathcal{V}')$ and let $P_{\mathcal{V}'}:\mathcal{V}\oplus \mathcal{V}'\rightarrow \mathcal{V}\oplus \mathcal{V}'$ be the projection onto $\mathcal{V}'$ along $\mathcal{V}$. Then the bundle map 

\begin{align*}
P_{\mathcal{V}'}L\mid_{E(L,\mathcal{V})^\perp}:E(L,\mathcal{V})^\perp\rightarrow\mathcal{V}'
\end{align*}
is an isomorphism. Indeed, if $P_{\mathcal{V}',\lambda}L_\lambda u=0$ for some $u\in E(L,\mathcal{V})^\perp_\lambda$, we infer $L_\lambda u\in \mathcal{V}_\lambda$ which implies $u\in E(L,\mathcal{V})_\lambda$ and hence $u=0$. Moreover, since $\dim E(L,\mathcal{V}\oplus \mathcal{V}')=\dim \mathcal{V}+\dim \mathcal{V}'$ and $\dim E(L,\mathcal{V})=\dim \mathcal{V}$ by \eqref{FredMor-lemma-dimE} we deduce that $\dim E(L,\mathcal{V})^\perp=\dim \mathcal{V}'$.\\
We conclude

\begin{align*}
E(L,\mathcal{V}\oplus \mathcal{V}')\cong E(L,\mathcal{V})\oplus E(L,\mathcal{V})^\perp\cong E(L,\mathcal{V})\oplus\mathcal{V}'\cong E(L,\mathcal{V})\oplus\mathcal{W},
\end{align*} 
where the last isomorphism exists because $\mathcal{W}$ is trivial and of the same dimension than $\mathcal{V}'$. Hence $E(L,\mathcal{V}\oplus \mathcal{V}')$ is a trivial bundle by \eqref{Bif-align-trivial}.\\
Now we begin the proof of the remaining assertion. Let $\mathcal{V}\subset \mathcal{F}$ be a finite dimensional trivial subbundle which is transversal to the image of $L$ and such that $E(L,\mathcal{V})$ is a trivial bundle. Since $\dim E(L,\mathcal{V})=\dim \mathcal{V}$ by \eqref{FredMor-lemma-dimE}, we can find an isomorphism $a:E(L,\mathcal{V})\rightarrow\mathcal{V}$. Let $P\in\mathcal{L}(\mathcal{E})$ and $Q\in\mathcal{L}(\mathcal{F})$ be projections such that $\im P_\lambda=E(L,\mathcal{V})_\lambda$ and $\im Q_\lambda=\mathcal{V}_\lambda$ , $\lambda\in X$. Then we can define a bundle morphism $K\in\mathcal{L}(\mathcal{E},\mathcal{F})$ by

\begin{align*}
K_\lambda=a_\lambda\circ P_\lambda-Q_\lambda\circ L_\lambda,\quad\lambda\in X,
\end{align*}
which consists of finite rank operators. Now we consider

\begin{align*}
L_\lambda+K_\lambda=(I_{\mathcal{F}_\lambda}-Q_\lambda)L_\lambda+a_\lambda P_\lambda
\end{align*}
which is a Fredholm morphism of index $0$. If $L_\lambda u+K_\lambda u=0$ for some $u\in \mathcal{E}_\lambda$, we infer $(I-Q_\lambda)L_\lambda u=a_\lambda P_\lambda u=0$ because $a_\lambda P_\lambda u\in \mathcal{V}_\lambda$ and $Q_\lambda$ is a projection onto $\mathcal{V}_\lambda$. Now from $(I-Q_\lambda)L_\lambda u=0$ we obtain $L_\lambda u\in \mathcal{V}_\lambda$ while $a_\lambda P_\lambda u=0$ gives $P_\lambda u=0$ because $a_\lambda$ is an isomorphism. But, since $P_\lambda$ is a projection onto $E(L,\mathcal{V})_\lambda=L^{-1}_\lambda(\mathcal{V}_\lambda)$, we conclude from $L_\lambda u\in\mathcal{V}_\lambda$ that $P_\lambda u=u$ and so finally $u=0$.\\
Hence each $L_\lambda+K_\lambda$ is an injective Fredholm operator of index $0$ and so an isomorphism. 
\end{proof}

We now immediately obtain the following important corollary.

\begin{cor}
$\ind L=0\in K(X)$ if and only if $L$ is homotopic to a bundle isomorphism. That is, there exists a bundle morphism $H:\pi^\ast\mathcal{E}\rightarrow\pi^\ast\mathcal{F}$ such that $\iota^\ast_0H=L$ and $\iota^\ast_1H$ is invertible. 
\end{cor}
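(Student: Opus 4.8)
The plan is to derive this corollary directly from Proposition \ref{Bif-prop-parametrix}, which already characterises the vanishing of $\ind L$ by the existence of an isomorphism $M\in GL(\mathcal{E},\mathcal{F})$ and a fibrewise compact $K\in\mathcal{L}(\mathcal{E},\mathcal{F})$ with $L=M+K$. So the task reduces to translating the decomposition $L=M+K$ into an explicit homotopy.

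For the non-trivial direction, suppose $\ind L=0$ and write $L=M+K$ as furnished by the proposition. I would define $H\in\mathcal{L}(\pi^\ast\mathcal{E},\pi^\ast\mathcal{F})$ over $X\times I$ by the straight-line path
\begin{align*}
H_{(\lambda,t)}=M_\lambda+(1-t)\,K_\lambda,\qquad (\lambda,t)\in X\times I.
\end{align*}
This is visibly a continuous bundle morphism since $M$ and $K$ are, and since $\pi^\ast\mathcal{E}$ and $\pi^\ast\mathcal{F}$ have the evident trivialisations in the $I$-direction. One checks $\iota^\ast_0 H=M+K=L$ and $\iota^\ast_1 H=M$, which is a bundle isomorphism by construction. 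It remains to observe that each $H_{(\lambda,t)}$ is a Fredholm operator, hence $H\in\mathfrak{F}(\pi^\ast\mathcal{E},\pi^\ast\mathcal{F})$: indeed $H_{(\lambda,t)}=M_\lambda+(1-t)K_\lambda$ is a compact perturbation of the isomorphism $M_\lambda$ and therefore Fredholm of index $0$ in every fibre. Thus $H$ is an admissible morphism realising the required homotopy from $L$ to the isomorphism $M$.

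Conversely, if such an $H$ exists, then $\iota^\ast_0 H=L$ and $\iota^\ast_1 H$ is invertible, so by the naturality and homotopy invariance of the index bundle (Lemmas \ref{FredMor-lemma-naturality} and \ref{FredMor-lemma-homotopy}, or directly the remark following Lemma \ref{FredMor-lemma-naturality}) together with the normalisation property (Lemma \ref{FredMor-lemma-normalization}) we get $\ind L=\ind(\iota^\ast_0 H)=\ind(\iota^\ast_1 H)=0$.

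I do not expect any genuine obstacle here: the whole content has been front-loaded into Proposition \ref{Bif-prop-parametrix}, and the corollary is essentially a repackaging. The only point requiring a moment's care is verifying that the interpolating family $H_{(\lambda,t)}$ stays inside $\mathfrak{F}(\pi^\ast\mathcal{E},\pi^\ast\mathcal{F})$ — but this is immediate from the stability of the Fredholm property under fibrewise compact perturbations, which is exactly what underlies Corollary \ref{FredMor-cor-compact} as well.
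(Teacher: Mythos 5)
Your argument is correct and is essentially the one the paper intends (the paper simply asserts the corollary follows "immediately" from Proposition \ref{Bif-prop-parametrix}): the straight-line homotopy $H_{(\lambda,t)}=M_\lambda+(1-t)K_\lambda$ in one direction and homotopy invariance plus normalisation in the other. No gaps.
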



\section{An application}

Let $n\in\mathbb{N}$, let $\mathcal{V}\subset\Theta(\mathbb{K}^{2n})$ be an $n$-dimensional subbundle of the $2n$-dimensional product bundle over the compact topological space $X$ and let $1\leq p\leq\infty$. We consider the family of unbounded operators on $L^p(I,\mathbb{K}^{n})$ defined by

\begin{align*}
L_{B_x}u=u',\quad \mathcal{D}(L_{B_x})=\{u\in W^{1,p}(I,\mathbb{K}^{n}):(x,u(0),u(1))\in\mathcal{V}_x\}.
\end{align*}
We leave it as an exercise for the reader to show that each $L_{B_x}$ is an unbounded Fredholm operator of (numerical) index $0$ on $L^p(I,\mathbb{K}^{n})$. Let us point out that boundary value problems of the type we consider here have been studied for a long time (cf. \cite{Conti} for a review).\\
In the following we not only consider $W^{1,p}(I,\mathbb{K}^n)$ as a subspace of $L^p(I,\mathbb{K}^n)$ but also as a Banach space in its own right with respect to the usual Sobolev norm. Now the aim of this section is to prove the following assertion.

\begin{prop}
There exists a continuous family of compact operators 

\begin{align*}
K:X\rightarrow\mathcal{L}(W^{1,p}(I,\mathbb{K}^n),L^p(I,\mathbb{K}^n))
\end{align*}
such that $L_{B_x}+K_x$ is invertible for all $x\in X$, if and only if $\mathcal{V}$ is stably trivial.
\end{prop}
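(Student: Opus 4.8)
The plan is to reformulate the problem as a statement about the index bundle of a genuine Fredholm morphism between Banach bundles and then invoke Proposition~\ref{Bif-prop-parametrix}. The operators $L_{B_x}$ are unbounded on $L^p(I,\mathbb{K}^n)$, but their domains all lie in the fixed Sobolev space $W^{1,p}(I,\mathbb{K}^n)$; the natural object to work with is therefore the bounded operator $\hat L_x:=L_{B_x}:W^{1,p}(I,\mathbb{K}^n)\to L^p(I,\mathbb{K}^n)$ with domain restricted by the boundary condition. Concretely, I would first build the domain bundle: let $\mathcal{E}$ be the subbundle of $\Theta(W^{1,p}(I,\mathbb{K}^n))$ whose fibre over $x$ is $\{u\in W^{1,p}(I,\mathbb{K}^n):(x,u(0),u(1))\in\mathcal{V}_x\}$. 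One checks this is a genuine Banach subbundle: the evaluation map $u\mapsto(u(0),u(1))$ is a bounded surjection $W^{1,p}(I,\mathbb{K}^n)\to\mathbb{K}^{2n}$ with (complemented, since finite codimensional) kernel, so $\mathcal{E}$ is the preimage of the subbundle $\mathcal{V}\subset\Theta(\mathbb{K}^{2n})$ under a bundle epimorphism, hence a direct subbundle by Lemma~\ref{BanachBundles-cor-kerbundle}. Differentiation $u\mapsto u'$ then defines a bundle morphism $L\in\mathcal{L}(\mathcal{E},\Theta(L^p(I,\mathbb{K}^n)))$ which in each fibre is exactly $\hat L_x$, an honest bounded Fredholm operator of index $0$ (the statement asks the reader to verify this).

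The second step is to compute $\ind(L)\in K(X)$ and to identify it with the relevant invariant of $\mathcal{V}$. Over a single point, $\ker\hat L_x$ consists of constant functions $c$ with $(x,c,c)\in\mathcal{V}_x$, and $\hat L_x$ is surjective iff the boundary condition is non-degenerate; in general $\dim\ker\hat L_x-\dim\coker\hat L_x=0$. To get the bundle, I would apply Lemma~\ref{FredMor-lemma-reduction} (the reduction property), or argue directly: choose a finite dimensional trivial $\mathcal{W}\subset\Theta(L^p)$ transversal to $\im L$ — for instance the span of enough constant functions — and analyse $E(L,\mathcal{W})$. A cleaner route is to observe that $\mathcal{E}$ itself splits as $\mathcal{V}\oplus\ker(\mathrm{ev})$ up to homotopy, and the morphism $L$ is, modulo a compact (indeed finite rank) perturbation coming from the inclusion $W^{1,p}\hookrightarrow L^p$ being compact, conjugate to a morphism whose index bundle is visibly $[\mathcal{V}]-[\Theta(\mathbb{K}^n)]$. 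Thus I expect $\ind(L)=[\mathcal{V}]-[\Theta(\mathbb{K}^n)]\in K(X)$, which vanishes precisely when $\mathcal{V}$ is stably trivial.

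The third and final step translates the two statements. By Proposition~\ref{Bif-prop-parametrix} applied to the Fredholm morphism $L\in\mathfrak{F}_0(\mathcal{E},\Theta(L^p(I,\mathbb{K}^n)))$, we have $\ind(L)=0$ iff there exist $M\in GL(\mathcal{E},\Theta(L^p))$ and a fibrewise compact $\tilde K\in\mathcal{L}(\mathcal{E},\Theta(L^p))$ with $L=M+\tilde K$. One must then check this is equivalent to the existence of the desired compact family $K:X\to\mathcal{L}(W^{1,p},L^p)$ making $L_{B_x}+K_x$ invertible: given $\tilde K$ on $\mathcal{E}$, extend it to $\Theta(W^{1,p})$ using a bundle projection onto $\mathcal{E}$ (which exists since $\mathcal{E}$ is a direct subbundle of a trivial bundle over a compact base), and conversely restrict. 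The compactness survives because the inclusion $W^{1,p}(I,\mathbb{K}^n)\hookrightarrow L^p(I,\mathbb{K}^n)$ is compact, so the relevant ambiguity between ``compact on $\mathcal{E}$'' and ``compact on $W^{1,p}$'' is harmless, and invertibility as a map $W^{1,p}\to L^p$ with the boundary condition is exactly invertibility of the unbounded operator $L_{B_x}+K_x$ on $L^p$.

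The main obstacle I anticipate is the bookkeeping in the second step: pinning down $\ind(L)=[\mathcal{V}]-[\Theta(\mathbb{K}^n)]$ rigorously rather than heuristically, since one has to track how the boundary condition bundle $\mathcal{V}$ enters the kernel bundle of $L$ and handle the case where $\hat L_x$ is not fibrewise surjective (so $E(L,\mathcal{W})$ genuinely mixes $\mathcal{V}$ with the cokernel contributions). The cleanest implementation is probably to pick the explicit transversal subbundle $\mathcal{W}=\Theta(\mathbb{K}^n)\subset\Theta(L^p)$ spanned by constants, verify transversality fibrewise, and then show $E(L,\mathcal{W})\cong\mathcal{E}$ canonically via $u\mapsto u$ together with the identification $\mathcal{E}_x=\{u:(x,u(0),u(1))\in\mathcal{V}_x\}$ — giving $\ind(L)=[\mathcal{E}]-[\Theta(\mathbb{K}^n)]$ — and finally that $[\mathcal{E}]=[\mathcal{V}]$ in $K(X)$ because $\mathcal{E}$ is an extension of $\mathcal{V}$ by the trivial bundle $\Theta(\ker(\mathrm{ev}))$, which is in fact a trivial bundle (constant) so contributes nothing to $K(X)$ after accounting. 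Once this identity is in hand, everything else is a direct application of the earlier results.
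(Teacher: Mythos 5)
Your overall strategy matches the paper's: build the domain bundle $\mathfrak{D}\subset\Theta(W^{1,p}(I,\mathbb{K}^n))$ as the preimage of $\mathcal{V}$ under the boundary evaluation map, view $u\mapsto u'$ as a Fredholm morphism $\tilde L:\mathfrak{D}\to\Theta(L^p(I,\mathbb{K}^n))$, reduce the existence of the compact family to $\ind\tilde L=0$ via Proposition~\ref{Bif-prop-parametrix}, and compute $\ind\tilde L$ using the constants $Y_1\cong\mathbb{K}^n\subset L^p$ as the transversal subbundle. The predicted answer $\ind\tilde L=[\mathcal{V}]-[\Theta(\mathbb{K}^n)]$ is also correct. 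Two remarks on step~3: your worry about passing between ``compact on $\mathcal{E}$'' and ``compact on $W^{1,p}$'' is not really an issue (extending $\tilde K$ by zero on a complement of $\mathfrak{D}$ in $\Theta(W^{1,p})$ keeps it fibrewise compact because bounded projections preserve compactness), and the compactness of the Sobolev embedding $W^{1,p}\hookrightarrow L^p$ plays no role anywhere in this argument.

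However, your ``cleanest implementation'' at the end contains a genuine error. You claim $E(L,\mathcal{W})\cong\mathcal{E}$ (with $\mathcal{W}=\Theta(\mathbb{K}^n)$ the constants) and then write $\ind(L)=[\mathcal{E}]-[\Theta(\mathbb{K}^n)]$ together with ``$[\mathcal{E}]=[\mathcal{V}]$ in $K(X)$.'' This cannot be right: by \eqref{FredMor-lemma-dimE}, $E(L,\mathcal{W})$ is a bundle of rank $n$, whereas $\mathcal{E}=\mathfrak{D}$ is infinite dimensional, so the two are not isomorphic and $[\mathcal{E}]$ is not an element of $K(X)$ to begin with; the appeal to ``$\mathcal{E}$ is an extension of $\mathcal{V}$ by the trivial bundle $\Theta(\ker(\mathrm{ev}))$'' mixes infinite-dimensional bundles into a $K$-theory identity where they do not belong. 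The correct identification, which is what the paper proves, is $E(\tilde L,\Theta(Y_1))\cong\mathcal{V}$ directly: the fibre $E(\tilde L,\Theta(Y_1))_x$ consists exactly of $u\in\mathfrak{D}_x$ with $u'$ constant, i.e.\ $u(t)=(1-t)a+tb$ with $(x,a,b)\in\mathcal{V}_x$, and $u\mapsto(x,u(0),u(1))$ gives a bundle isomorphism onto $\mathcal{V}$. This yields $\ind\tilde L=[\mathcal{V}]-[\Theta(\mathbb{K}^n)]$ immediately, without any detour through $\mathcal{E}$.
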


We let $P\in\mathcal{L}(\Theta(\mathbb{K}^{2n}))$ denote a projection onto $\mathcal{V}$ and observe at first that we have a surjective bundle map

\begin{align*}
\Theta(W^{1,p}(I,\mathbb{K}^n))\rightarrow\im(I-P),\quad (x,u)\mapsto (I-P_x)(x,u(0),u(1)).
\end{align*} 
Hence the kernel $\mathfrak{D}$ of this morphism is a direct subbundle of $\Theta(W^{1,p}(I,\mathbb{K}^n))$ whose total space consists of the set of all domains $\mathcal{D}(L_{B_x})$, $x\in X$. Moreover, the family $L_B$ defines a Fredholm morphism $\tilde{L}:\mathfrak{D}\rightarrow \Theta(L^p(I,\mathbb{K}^n))$.\\
Next we show that a family of compact operators as in the assertion of our proposition exists if and only if $\ind\tilde{L}=0\in K(X)$. We assume at first that the family of compact operators $K$ exists. Then we obtain by restriction a bundle morphism 

\begin{align*}
\tilde{K}:\mathfrak{D}\rightarrow \Theta(L^p(I,\mathbb{K}^n))
\end{align*}
which is compact in every fibre and such that $\tilde{L}_x+\tilde{K}_x$ maps $\mathfrak{D}_x=\mathcal{D}(L_{B_x})$ to $L^p(I,\mathbb{K}^n)$ bijectively. Hence $\ind\tilde{L}=0$ by lemma  \ref{FredMor-lemma-normalization} and corollary \ref{FredMor-cor-compact}. If, on the other hand, the index bundle of $\tilde{L}$ is trivial, then by proposition \ref{Bif-prop-parametrix} there exists a bundle morphism $\tilde{K}:\mathfrak{D}\rightarrow \Theta(L^p(I,\mathbb{K}^n))$ which is compact in every fibre and such that $\tilde{L}+\tilde{K}$ is an isomorphism. Now we extend $\tilde{K}$ to $\Theta(W^{1,p}(I,\mathbb{K}^n))$ trivially and obtain a family of compact operators $K:X\rightarrow\mathcal{L}(W^{1,p}(I,\mathbb{K}^n),L^p(I,\mathbb{K}^n))$.\\
Our final aim is to compute $\ind\tilde{L}$.  The space $L^p(I,\mathbb{K}^{n})$ can be decomposed as direct sum of the $n$-dimensional space $Y_1$ of constant functions and

\begin{align*}
Y_2=\left\{y\in L^p(I,\mathbb{K}^{n}):\int^1_0{y(s)\,ds}=0\right\}.
\end{align*} 
If $v\in Y_2$, then $u(t):=\int^t_0{v(s)\, ds}$, $t\in I$, defines an element of $\mathcal{D}(L_{B_x})$ such that $L_{B_x}u=v$. Hence $L_{B_x}(\mathcal{D}(L_{B_x}))\supset Y_2$ and we conclude

\begin{align*}
\im L_{B_x}+Y_1=L^p(I,\mathbb{K}^{n}),\quad x\in X.
\end{align*}
Since the total space of $E(\tilde{L},\Theta(Y_1))$ is given by

\begin{align*}
\{(x,w)\in& X\times W^{1,p}(I,\mathbb{K}^{n}):w\in\mathcal{D}(L_{B_x}), w'\in Y_1\}\\
&=\{(x,w)\in X\times W^{1,p}(I,\mathbb{K}^{n}):w\in\mathcal{D}(L_{B_x}), w'\equiv const. \}\\
&=\{(x,w)\in X\times W^{1,p}(I,\mathbb{K}^{n}):w(t)=(1-t)a+tb,\, (x,a,b)\in\mathcal{V}_x\},
\end{align*}
we see that $E(\tilde{L},\Theta(Y_1))$ is isomorphic to $\mathcal{V}$. Now we finally infer

\begin{align*}
\ind\tilde{L}=[\mathcal{V}]-[\Theta(\mathbb{K}^n)]\in K(X),
\end{align*}
which is trivial if and only if $\mathcal{V}$ is stably trivial because of the assumption $\dim\mathcal{V}=n$.

\bigskip

\begin{flushleft}

Nils~Waterstraat,\\
Dipartimento di Scienze Matematiche, Politecnico di Torino\\
Corso Duca Degli Abruzzi 24, 10129 Torino, Italy\\
e-mail: \texttt{waterstraat@daad-alumni.de}\\[2ex]
\end{flushleft}

\end{document}